    \numberwithin{equation}{subsection}
    \newtheorem{propo}{Proposition}[section]
    \newtheorem{corol}[propo]{Corollary}
    \newtheorem{theor}[propo]{Theorem}
    \newtheorem{lemma}[propo]{Lemma}
    \theoremstyle{definition}
    \theoremstyle{remark}
\newcommand{\ZZ}{\mathbb{Z}}
\newcommand{\RR}{\mathbb{R}}
 \newcommand{\G}{\mathcal{G}}
 \newcommand{\E}{\mathcal{E}}
 \newcommand{\F}{\mathcal{F}}
 \newcommand{\A}{\mathcal{A}}
  \newcommand{\B}{\mathcal{B}}
 \newcommand{\I}{\mathcal{I}}
\newcommand{\sal}{\mathcal{S}}
  \newcommand{\D}{\mathcal{D}}
\newcommand{\End}{\operatorname{End}}
\newcommand{\Ker}{\operatorname{Ker}}
\newcommand{\Ima}{\operatorname{Im}}
\newcommand{\card}{\operatorname{card}}
\newcommand{\id}{\operatorname{id}}
\newcommand{\rel}{\operatorname{rel}}
\let\oldmarginpar\marginpar
\renewcommand\marginpar[1]{\oldmarginpar{\footnotesize #1}}
\begin{document}

    \title[{Dimer spaces   and gliding systems}]{Dimer spaces   and gliding systems}

    \author[Vladimir Turaev]{Vladimir Turaev}
    \address{
    Vladimir Turaev \newline
    \indent   Department of Mathematics \newline
    \indent  Indiana University \newline
    \indent Bloomington IN47405, USA\newline
    \indent $\mathtt{vturaev@yahoo.com}$}

                     \begin{abstract} Dimer coverings (or perfect matchings) of a finite graph are classical objects of graph theory appearing in   the study of exactly solvable models of statistical mechanics. We introduce more general dimer labelings which form a topological space called the dimer space  of the graph. This  space turns out to be  a cubed complex whose vertices are the dimer coverings. We show that the dimer space is nonpositively curved in the sense  of Gromov, so that its universal covering      is     a CAT(0)-space. We  study the fundamental group of the dimer space and, in particular, obtain  a presentation of this group by generators and relations. We  discuss connections with right-angled Artin groups and   braid groups of graphs. Our approach uses so-called gliding systems in groups designed to produce nonpositively curved cubed complexes.
 \end{abstract}\footnote{AMS Subject
                    Classification: 57M15, 05C10, 20F36, 20F67, 82B20}
                     \maketitle

   \section {Introduction}

   A dimer covering, or a perfect matching, of a  graph   is a set of
   edges
   such that each vertex   is incident to exactly one of them.
   Dimer coverings have been extensively studied since 1960's in connection with   exactly solvable models of statistical mechanics and, more recently, in connection with path algebras, see \cite{Bo},   \cite{Ken2} and  references therein. In the present paper we develop a  geometric  approach to dimer coverings. Specifically, we introduce and study  a dimer space and a dimer group of a finite graph.

Consider a finite graph ${\Gamma}$ (without loops but possibly with multiple edges).    A {\it dimer labeling} of   ${\Gamma}$ is a
labeling of the edges of   ${\Gamma}$  by non-negative real numbers such
that for every vertex of   ${\Gamma}$, the   labels of the adjacent
edges
 sum up  to give $1$ and    only one or two of   these labels may be non-zero.
 The set $ L=L({\Gamma})$ of dimer labelings  of   ${\Gamma}$ is a closed subset of  the cube formed by
all labelings of edges   by numbers in  $ [0,1]$. We endow
$L $
  with the induced  topology. We will show that   $L$ is a CW-complex, possibly disconnected, and   all connected  components of   $L  $ are
aspherical, i.e.,   have trivial higher homotopy groups.

The characteristic function of a  dimer covering of $\Gamma$, carrying the edges of
the  covering  to $1$  and  all other edges  to $0$, is a dimer labeling.    The  points of   $L $  represented in this way by
dimer coverings   lie in a single    component
$L_0=L_0({\Gamma})$  of~$L$ called the {\it dimer space}.
 The
fundamental group  of  $L_0$ is   the {\it dimer group} of~${\Gamma}$. We show that the dimer group  is torsion-free,
residually nilpotent, residually finite, biorderable, biautomatic,
    has solvable word and conjugacy
problems, satisfies   the Tits alternative,     embeds in $SL_n(\ZZ)$
for some~$n$, and embeds in   a finitely generated
right-handed Artin group. The fundamental groups of other components of $L$ are isomorphic to the dimer groups of certain subgraphs of ${\Gamma}$ and share the properties listed above.

The reader  familiar with geometric group theory will
immediately recognize   the  properties of groups  arising in the
study of  Cartan-Alexandrov-Toponogov (0)-spaces in the sense of Gromov, briefly called
    CAT(0)-spaces. Such a
space has a geodesic metric in which all geodesic triangles are
at least as thin as   triangles in~$\RR^2$ with the same sides.  The
 theory of CAT(0)-spaces will be our main tool.
We show that   $L_0 $ has a natural structure of
a cubed complex whose vertices are   the dimer coverings of
${\Gamma}$. This cubed complex satisfies Gromov's link
condition, and therefore~$L_0 $ is nonpositively curved.
The universal covering   of $L_0 $  is     a
  CAT(0)-space. Similar results hold for other components of $L$.

The cubed structure in $L_0 $ arises from the following
 observation. Consider
  an embedded circle $s $ in $ {\Gamma}$ of even length (i.e., formed by an even number of edges). Suppose that   every second edge of $s$
belongs to  a dimer covering $A$ of ${\Gamma}$. Removing these edges from $A$   and adding instead  all other edges of $s$  we
obtain a new dimer covering   $sA$. We say that $sA$ is obtained
from $A$ by {\it gliding} along $s$. Deforming the labeling
determined by $A$ into the labeling determined by $sA$ we   obtain a
path (a 1-cube)  in $L_0 $. More generally, we can
start with a  family of $k\geq 1$ disjoint embedded circles in
${\Gamma}$  of even length  meeting $A$ along every second edge. Gliding $A$ along
(some of) these circles, we obtain   $2^k$ dimer coverings of
${\Gamma}$ that serve as the vertices of a $k$-dimensional cube in
$L_0 $. Deforming   the labelings
associated with these  vertices we   obtain all   points of the
cube. Similar ideas were   introduced in \cite{STCR} in the study of domino tilings of planar regions.

Besides the properties of the dimer group  mentioned above, we obtain  a  presentation of this group     by generators and relations. Denote the set of dimer coverings of ${\Gamma}$ by $\D$. For   $A\in \D$ and a vertex $v$ of ${\Gamma}$, denote the only edge of~$A$ incident to $v$ by $A_v$.    We say that a triple of  dimer coverings $A,B,C\in \D$ is {\it flat} if for any vertex~$v$ of ${\Gamma}$, at least two of the edges $A_v, B_v, C_v$ coincide.   For each $A_0\in \D$, the dimer group $\pi_1(L_0, A_0)$ is generated by the symbols
$\{y_{A,B}\}_{A,B \in \D}$ numerated by   ordered pairs of dimer coverings. The defining relations are as follows:   $y_{A,C}=y_{A,B} \, y_{B,C}$ for any flat triple  $A,B,C\in \D$ and $y_{A_0,A}=1$ for all $A\in \D$. We obtain a similar presentation for the fundamental groupoid of the pair $(L_0, \D)$.

Among other results of the paper note  a   connection of the dimer groups to  the braid groups of   graphs and a generalization of the   dimer   groups     to hypergraphs.

The study of dimer coverings    suggests an axiomatic framework of  gliding systems. A {\it gliding
system} in a group $G$ consists of certain elements of   $G$ called  {\it  glides} and a  binary relation on the set of glides called {\it independence} satisfying a few   axioms.   Given a gliding system in $G$ and a  set $\E \subset G$, we construct a cubed complex $X_\E$ called the {\it glide complex}.       The fundamental groups of the components of $X_\E$ are  the {\it glide groups}. We formulate  conditions   ensuring that $X_\E$ is nonpositively curved. One can view gliding systems as  devices producing   nonpositively curved complexes and interesting groups.
  The dimer space    of a graph      is an instance  of the glide  complex  where   $G$ is the     group of   $\ZZ/2\ZZ$-valued function on the set of edges, the  glides are the characteristic functions of the sets of edges forming  embedded circles of even length, the independence mirrors the disjointness of embedded circles, and~$\E  $ consists of the    characteristic functions of   dimer coverings.

The paper is organized as follows. Sections
\ref{sect-prelim}--\ref{lloccgccgcgl} are concerned with glides. We define gliding systems (Section~\ref{sect-prelim}), construct the glide complexes (Section~\ref{ From glides to cubed   complexes}), study   natural   maps between the glide groups (Section~\ref{incmaps}),
embed the glide groups into right-angled Artin groups (Section~\ref{embArtin}), produce  presentations of the glide groups by generators and relations (Section~\ref{redu}),  and  study a   class of set-like gliding systems (Section~\ref{lloccgccgcgl}).   Next, we introduce   and study dimer complexes (Section~\ref{dimergroups}) and dimer groups   (Section~\ref{newDimer coverings+}).   In Section~\ref{extension-----} we discuss connections with braid groups. In Section~\ref{extension}  we extend our definitions and  results   to hypergraphs.

The author would like to thank  M. Ciucu for several stimulating discussions. This work   was partially supported by the NSF
  grant  DMS-1202335.

 \section{Glides}\label{sect-prelim}

 \subsection{Gliding systems} \label{BU} By a  {\it  gliding system} in a group $G$ we mean a pair of sets
 $(\G\subset G \setminus \{1\} , \I\subset \G\times \G)$ satisfying the following conditions:

 (1) if $s\in \G$, then  $s^{-1}\in \G$;

 (2) if $(s , {t}) \in \I$ with $s,{t}\in \G$, then $s {t}={t}s$ and $(s^{-1} , {t})\in
 \I $, $({t},s)\in
 \I$;

 (3) $(s,s)\notin \I$ for all $s\in \G$.

  The elements of $\G$ are called
  {\it glides}.   The inverse of a glide is a glide while the unit   $1\in G$ is never a glide. We say that two glides $s,{t} $ are {\it
 independent} if  $(s , {t}) \in \I$. By (2), the independence is a
property of   non-ordered pairs of glides preserved under inversion
of one or both glides. A glide is never independent from itself or
its inverse. Also, independent glides commute.

For   $s\in \G$  and    $A \in G$,  we say that $sA\in G$ is
obtained from $A$ by {\it (left) gliding along}~$s$. One can
similarly consider right glidings but we do not need them. Clearly,
  the gliding of $sA$  along $s^{-1}$ yields $s^{-1}sA=A$.

A {\it set of independent glides}  is  a set   $S \subset \G$ such
that $(s,t)\in \I$ for any distinct $s,t\in S$.  Since independent
glides commute, a  finite set of independent glides $S$ determines
an element $[S]=\prod_{s\in S} s$ of $ G$. By definition,
$[\emptyset]=1$.

\subsection{Examples}\label{exam} 1.   For any group $G$, the following pair is a   gliding
system:  $\G= G \setminus \{1\}$  and $\I$ is the set of all pairs
$(s,t)\in \G\times \G$ such that $s\neq t^{\pm 1}$ and $st=ts$.

2. The pair $(\G= G \setminus \{1\}, \I=\emptyset)$ is a   gliding
system in a  group~$G$.

3. Let $G $ be a free abelian group   with   free commuting generators $\{g_i\}_i$. Then
$$\G=\{ g_i^{\pm 1} \}_{i}, \quad \I=\{(g_i^\varepsilon,g_j^\mu)
\, \vert \, \varepsilon=\pm 1,
\mu =\pm 1,   i\neq j\}$$ is a
  gliding system in $G$.

4. A   generalization of the previous example is provided by the
theory of right-angled Artin groups (see \cite{Ch} for an
exposition). A right-angled Artin group is a group allowing a
presentation  by generators and relations in which all relators are
commutators of the generators. Any graph ${\Gamma}$ with the set of
vertices $V$ determines a right-angled Artin group $G=G({\Gamma})$
with generators $\{g_s\}_{s\in V}$   and relations $g_s g_t g_s^{-1}
g_t^{-1}=1$ which hold whenever   $s, t \in V$ are connected by an
edge in~${\Gamma}$ (we write then $s\leftrightarrow t$). Abelianizing
$G$ we obtain that $g_s\neq g_t^{\pm 1}$ for $s\neq t$. The pair
\begin{equation}\label{CanA} \G=\{ g_s^{\pm 1} \}_{s\in V}, \quad
\I=\{(g_s^\varepsilon, g_t^\mu)\, \vert \, \varepsilon=\pm 1, \mu
=\pm 1, s,t \in V, s\neq t, s\leftrightarrow t
 \}\end{equation} is a
gliding system in $G$.

5.  Let $E$ be  a set  and     $ G= 2^E$   be the power set of $E$ consisting of all
subsets of $E$. We define multiplication  in $G$ by  $ A  B = (A\cup
B)\setminus (A\cap B)$  for   $A,B\subset E$. This   turns $G$
into an abelian   group with unit $1=\emptyset$, the {\it power group of
  $E$}.  Clearly, $A^{-1}=A$ for all $A\in G$. Pick any set
$\G\subset G\setminus \{1\}$ and declare
  elements of~$\G$   independent when  they are disjoint as
subsets of~$E$. This gives  a     gliding system in~$G$.

6.  Let $E$ be a set, $H$ be a multiplicative group, and  $G=H^E$ be the group of
all maps $E\to H$ with pointwise multiplication. The {\it support}
of  a map $f:E\to H$ is the
 set  $ f^{-1} (H\setminus \{1\})\subset E$. Pick a   set $\G\subset G\setminus \{1\}$ invariant under inversion and  declare
  elements of $\G$   independent if
   their supports are disjoint. This gives
  a
gliding system in $G$. When $H$ is a cyclic group of order~$2$, we
recover Example 5 via the isomorphism  $H^E\cong 2^E$ carrying a map
$E\to H$ to its support.


\section{ Glide    complexes and glide groups}\label{ From glides to cubed   complexes}

We discuss   cubed complexes associated with gliding systems. We
begin with generalities on cubed complexes referring for details to
\cite{BH}, Chapters I.7 and II.5.

\subsection{Cubed complexes}\label{Cubed complexes} Set $I=[0,1]$. A {\it cubed complex}  is a
CW-complex $X$   such that each (closed) $k$-cell of $X$ with $k\geq
0$ is a continuous map   from the $k$-dimensional cube $I^k$ to $X$
whose restriction  to the interior of $I^k$ is injective and whose
restriction  to each $(k-1)$-face of $I^k$ is an isometry of that
face onto $I^{k-1}$ composed with a $(k-1)$-cell $I^{k-1}\to X$ of
$X$.  The $k$-cells $I^k \to X$   are not required to be injective.
The $k$-skeleton  $X^k$  of $X$ is the union of the images of all
 cells of dimension  $ \leq k$.

 For example, the cube $I^k$ together with all its faces is a cubed complex.
 The $k$-dimensional torus obtained by identifying opposite faces of $I^k$ is a cubed complex.

The {\it link} $LK(A)=LK(A;X)$ of a 0-cell $A $ of a cubed complex
$X$ is the space of all directions at $A$. Each triple ($k\geq 1$, a
vertex $a$ of $I^k$,   a $k$-cell $\alpha:I^k \to X $ of $X$
carrying $a$ to $A$) determines a  $(k-1)$-dimensional simplex in
$LK(A)$ in the obvious way. The faces of this simplex are determined
by the restrictions of $\alpha$ to the faces of $I^k$
containing~$a$. The simplices corresponding to all triples
$(k,a,\alpha)$ cover $LK(A)$ but may not form a simplicial complex.
We say, following \cite{HW}, that the cubed complex $X$ is {\it
simple} if the links of all $A\in X^0$ are    simplicial complexes,
i.e., all   simplices  in $LK(A)$    are embedded and
the intersection of any two simplices is a common face.


A {\it flag complex} is a simplicial complex such that any finite
collection of pairwise adjacent vertices spans a simplex.  A cubed
complex is {\it nonpositively curved} if it is simple and the link
of each 0-cell is a flag complex. A theorem of M. Gromov asserts
that the universal covering of any connected finite-dimensional
nonpositively curved cubed complex  $X$ is   a CAT(0)-space.
Since
CAT(0)-spaces are contractible, all higher homotopy groups of  such an $X$
vanish while the fundamental group $\pi=\pi_1(X)$ is torsion-free. This group
     satisfies a strong form of the Tits alternative: each
subgroup of $\pi $ contains a rank 2 free subgroup or is virtually a
finitely generated abelian group, see \cite{SW}. Also,  $\pi$ does
not have Kazhdan's property (T), see \cite{NR1}. If $X$ is  compact,
then $\pi$ has solvable word and conjugacy problems and is
biautomatic, see \cite{NR2}.


\subsection{Glide  complexes}\label{cubecomplexX} Consider   a
group $G$ endowed with a gliding system. We associate with $G$ a cubed complex $
X_G$ called the {\it glide complex}.

We call a  set of glides $S$   {\it cubic} if $S$ is finite, the  glides in
$S$ are pairwise independent,  and for any distinct subsets
$T_1,T_2$ of $  S$ we have  $[T_1]\neq [T_2]$.    The following
properties of cubic sets of glides are straightforward:

-- any  set  of independent glides with $\leq 2$ elements is
cubic;

-- all subsets of a cubic set of glides are cubic;

-- for each subset $T$ of a cubic set of glides $S$, the set
  $S_T=(S\setminus T)\cup \{t^{-1}\}_{t\in T}$ is a  cubic set of glides.

A {\it based cube} in $G$ is a pair  ($A\in G$, a cubic set of
 glides $S\subset G$). Then, the set $\{[T]A\}_{T\subset S} \subset G$ has
 $2^k$ distinct
elements where $k=\card(S)$ is the {\it dimension} of the based cube
$(A,S)$.  These $2^k$ elements of $G$ are called  the {\it vertices}
of  $(A,S)$.

  Two based cubes $(A,S)$ and $(A', S')$ are   {\it equivalent} if there is a set $T\subset S$ such that
$A'=[T]A$ and $S'=S_T$ where $S_T$ is   defined above. This is
indeed an equivalence relation on the set of based cubes. It is
clear that   each $k$-dimensional based cube is equivalent to $2^k$
based cubes (including itself). The equivalence classes of
$k$-dimensional based cubes are called {\it $k$-dimensional  cubes} or {\it $k$-cubes}
in $G$. Since equivalent based cubes have the same vertices, we may
speak of the vertices of a cube. The $0$-cubes in $G$ are just the elements of $G$.

A cube $Q$ in $G$ is a {\it face} of a cube $Q'$  in $G$  if $Q,Q'$ may be
represented by based cubes $(A,S)$, $(A', S')$, respectively, such
that $A=A'$ and $S\subset S'$. Note that  in the role of $A$ one
may take an arbitrary vertex of $Q$.

The  glide complex  $X_G$ is the cubed complex obtained by taking a
copy of $I^{k}$ for each    $k$-cube    in $G$ with
$k\geq 0$ and gluing these copies via
 identifications determined by inclusions of     cubes   into bigger     cubes   as their faces.
Here is a more precise definition.   A point of $X_G$ is represented
by a triple $(A,S,x)$ where $(A,S)$ is a  based cube in $G$ and  $x$
is  a map $ S \to I $. For fixed $(A, S)$,    such  maps $x$ form a
geometric cube, a product of $\card (S)$ copies of $I$.  We take a
disjoint union of these cubes over all   $(A,S )$ and factorize it
by the equivalence relation generated by the following relation:
$(A,S,x) \sim (A',S',x')$ when $ A=A', S\subset S', x=x'\vert_{S},
x'(S'\setminus S)=0$ or there is a set $T\subset S$ such that
$A'=[T]A, S'=S_T $, $x'=x$ on $S\setminus T$, and
$x'(t^{-1})=1-x(t)$ for all $ {t\in T}$.  The  quotient space $X_G$
is a cubed space in the obvious way. It is easy to see that all cubes forming $X_G$ are embedded in $X_G$.

The formula $(A,S,x)g=(Ag,S,x)$ for   $g\in G$ defines a right
action of $G$ on~$X_G$ preserving the cubed structure   and free on
the $0$-skeleton $X^0_G=G$. When $G$ has no elements of finite  order, the action of $G$ on $X_G$ is free. This follows
  from the fact that the set of vertices of the minimal cube
  containing a fixed point of the action of $g\in G$ must be invariant under
  the action of $g$.

We further associate with any
 set $\E\subset G$    the  cubed complex $X_\E \subset X_G$ formed
by the cubes in $G$ whose   vertices lie in $\E$.  Such cubes
are said to be {\it cubes in $\E$}.   The   set of   connected
components of $ X_\E$  is the quotient of $\E$ by the equivalence
relation generated by   glidings in $\E$. We call $X_\E$  the {\it
glide complex} of $\E$. The fundamental groups of the components of
$X_\E$ are called the {\it glide groups  of} $\E$. Thus, each $A\in
\E$ gives rise to a glide group $\pi_1(X_\E, A)$, and  elements of
$\E$ related by glidings  in $\E$  give rise to isomorphic groups.
 If $\E$ is finite, then $X_\E$ is a finite
CW-space and the glide groups are finitely presented.

 \begin{lemma}\label{simple}   The  cubed complex $X_\E$ is simple for any $\E\subset G$.
   \end{lemma}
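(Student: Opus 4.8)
The plan is to reduce to the case $\E=G$ and then analyse the link of an arbitrary $0$-cell $A$ of $X_G$ directly. Since $X_\E$ is a subcomplex of $X_G$ and the $0$-skeleton of $X_\E$ is $\E\subset G=X_G^0$, the link $LK(A;X_\E)$ of a vertex $A\in\E$ is the subcomplex of $LK(A;X_G)$ spanned by the simplices coming from those cubes in $\E$ that contain $A$ (a set closed under passing to faces, since the vertices of a face of $(A,S)$ lie among the vertices of $(A,S)$). A subcomplex of a simplicial complex is again a simplicial complex, so it suffices to prove that $X_G$ is simple, i.e.\ that $LK(A)=LK(A;X_G)$ is a simplicial complex for every $A\in G$.

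The main structural input is that, for fixed $A$, the map $S\mapsto[(A,S)]$ is a bijection from the set of finite cubic sets of glides onto the set of cubes of $X_G$ having $A$ among their vertices. Surjectivity holds because any cube may be based at any one of its vertices. Injectivity is where the word \emph{cubic} enters: if $[(A,S)]=[(A,S')]$, then $A=[T]A$ and $S'=S_T$ for some $T\subset S$, so $[T]=[\emptyset]=1$, and the requirement that distinct subsets of a cubic set have distinct products forces $T=\emptyset$, whence $S'=S$. Consequently the $1$-cubes at $A$ are in bijection with the glides $s\in\G$ (each singleton $\{s\}$ being cubic, as $s\neq 1$); these are the vertices of $LK(A)$. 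A cubic set $S$ with $\card(S)=k$ is a $k$-cube at $A$ contributing a $(k-1)$-simplex $\sigma_S\subset LK(A)$ whose vertices are the glides in $S$. As these glides are pairwise distinct and the cube $(A,S)$ is embedded in $X_G$, the simplex $\sigma_S$ is embedded, and its faces are exactly the $\sigma_U$ with $U\subset S$, corresponding to the subcubes $(A,U)$.

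It remains to verify that two such simplices meet in a common face, i.e.\ $\sigma_S\cap\sigma_{S'}=\sigma_{S\cap S'}$. The inclusion $\supseteq$ is clear, since $S\cap S'$ is cubic and $\sigma_{S\cap S'}$ is a face of both. For $\subseteq$ I would use that every point of $X_G$ lies in the interior of a unique cube, its \emph{carrier} (a consequence of $X_G$ being a cubed complex with embedded cubes). Working in a small corner neighbourhood of $A$, all cube coordinates are close to $0$, so the rebasing clause of the defining relation (which sets $A'=[T]A$, $S'=S_T$ and $x'(t^{-1})=1-x(t)$) cannot be invoked with $T\neq\emptyset$, as it would force a coordinate near $1$. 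Hence a direction into $(A,S)$ with support $U\subset S$ has carrier the subcube $(A,U)$, and if it also points into $(A,S')$ its carrier is likewise $(A,U')$ for some $U'\subset S'$. Uniqueness of the carrier together with the injectivity above gives $U=U'\subset S\cap S'$, so $\sigma_S$ and $\sigma_{S'}$ are glued precisely along $\sigma_{S\cap S'}$. Thus $LK(A)$ is the geometric realization of the abstract simplicial complex with vertex set $\G$ whose simplices are the finite cubic sets of glides, and in particular it is a simplicial complex.

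I expect the genuine obstacle to be this last step: ruling out identifications in the link produced by leaving the corner neighbourhood of $A$ through the rebasing relation and returning. Making this precise is exactly the well-definedness (uniqueness) of the carrier cube of a point, which is the content hiding behind the embeddedness of cubes recorded before the lemma; once carrier uniqueness is in hand, the combinatorics of cubic sets does the rest.
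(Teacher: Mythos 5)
Your proof is correct and follows essentially the same route as the paper: the paper's own argument likewise describes the corner neighbourhood of $A$ as the triples $(A,S,x)$ with $x(S)\subset[0,1/2)$, observes that in this range only the face-inclusion identifications (agreement on $S_1\cap S_2$, vanishing on the symmetric difference) can occur — which is exactly your point that the rebasing clause would force a coordinate near $1$ — and concludes that $LK_G(A)$ is the simplicial complex whose simplices are the cubic sets of glides, with $LK_\E(A)$ the evident subcomplex. Your write-up just makes explicit the carrier-uniqueness and the injectivity of $S\mapsto[(A,S)]$ that the paper leaves implicit.
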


\begin{proof} A neighborhood of $A\in G$ in $X_G$ can be obtained by taking all triples $(A,S,x)$, where $S$ is a  cubic set of glides and $x(S)\subset [0,1/2)$, and  identifying two such triples $(A,S_1,x_1)$, $ (A,S_2,x_2)$ whenever $  x_1=x_2$ on $S_1\cap S_2$ and $
x_1(S_1\setminus S_2)=x_2(S_2\setminus S_1)=0$. Therefore, the link $LK_G(A)$ of $A $ in $X_G$ is the simplicial complex whose vertices are  glides   and whose simplices are   cubic sets of  glides. The link $LK_\E(A)$ of $A $ in $X_\E$ is the subcomplex of $LK_G(A)$ formed by the glides $s\in G$ such that $sA\in \E$ and the cubic sets $S$ such that $[T]A\in \E$ for all $T\subset S$.
\end{proof}

For $A\in \E $, we can reformulate the flag condition on
$LK_\E(A)$ in terms of glides. Observe that the  sets of pairwise
 adjacent vertices in $LK_\E(A)$ bijectively correspond to
 the  sets of independent glides $S \subset G$  satisfying the
following condition:

$(\ast)$  $sA\in \E$ for all $s\in S$  and $s{t} A\in \E$ for all
distinct $s, {t}\in S$.

\begin{lemma}\label{simple+}   $LK_\E(A)$ is a flag complex if and only if any   finite set
 of independent glides $S \subset G$  satisfying $(\ast)$ is cubic and   $[S]A\in \E$.
   \end{lemma}

This lemma follows directly from the definitions. One should use the
obvious fact  that if a  set  of independent glides satisfies
$(\ast)$ then so do all its subsets.

 We call a set $\E\subset G$   {\it regular}
if    for every $A\in \E$, all   finite sets
 of independent glides  $S \subset G$  satisfying $(\ast)$ are cubic.



 \begin{theor}\label{simpleNEW}  For a  group $G$ with a gliding system
 and   a  set $\E\subset G$,
 the  cubed complex $X_\E$ is nonpositively curved if and only if $\E$   is regular and meets the following  3-cube condition:  if $A\in \E$ and    pairwise
independent glides $s_1, s_2, s_3\in G$ satisfy $  s_1A, s_2 A, s_3
A, s_1 s_2 A, s_1 s_3 A, s_2 s_3A\in \E$, then $s_1 s_2 s_3 A\in
\E$.
\end{theor}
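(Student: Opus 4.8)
The plan is to reduce the statement to the flag condition on links and then verify that condition by induction, with the 3-cube condition serving as the inductive engine. By Lemma \ref{simple} the complex $X_\E$ is always simple, so $X_\E$ is nonpositively curved if and only if the link $LK_\E(A)$ is a flag complex for every vertex $A\in\E$. By Lemma \ref{simple+} the latter holds precisely when, for each $A\in\E$, every finite set $S$ of independent glides satisfying $(\ast)$ is cubic and satisfies $[S]A\in\E$. The clause ``$S$ is cubic'' is exactly regularity of $\E$, so it remains to show that, in the presence of regularity, the clause ``$[S]A\in\E$ for all $(\ast)$-sets $S$'' is equivalent to the 3-cube condition.

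For the ``only if'' direction I would simply specialize: a set $S=\{s_1,s_2,s_3\}$ of three pairwise independent glides with $s_iA,s_is_jA\in\E$ is a $(\ast)$-set, so nonpositive curvature forces $[S]A=s_1s_2s_3A\in\E$, which is precisely the 3-cube condition; regularity is immediate from Lemma \ref{simple+}.

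For the ``if'' direction I would fix $A\in\E$ and prove $[S]A\in\E$ for every $(\ast)$-set $S$ by induction on $k=\card(S)$. The cases $k\leq 2$ are immediate from $(\ast)$ (with $k=0$ giving $A\in\E$). For $k\geq 3$ I would choose three glides $s_1,s_2,s_3\in S$, set $U=S\setminus\{s_1,s_2,s_3\}$ and $B=[U]A$, and note that since the glides in $S$ pairwise commute one has $[U\cup T]A=[T]B$ for every $T\subset\{s_1,s_2,s_3\}$. Each set $U\cup T$ with $T\subsetneq\{s_1,s_2,s_3\}$ is a $(\ast)$-set of size $<k$, so the induction hypothesis gives $B\in\E$ together with $s_iB,s_is_jB\in\E$ for all distinct $i,j$. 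Applying the 3-cube condition to $B$ and $s_1,s_2,s_3$ then yields $[S]A=s_1s_2s_3B\in\E$, completing the induction; regularity supplies the cubicity clause, and Lemma \ref{simple+} delivers the flag property.

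The routine parts are the translation through Lemmas \ref{simple} and \ref{simple+} and the commutation bookkeeping for $[U\cup T]A=[T]B$. The one genuine point, and the crux of the argument, is the inductive step of the ``if'' direction: one must see that the 3-cube condition, which constrains cubes of a single fixed dimension, already forces the closure condition in all dimensions. The device that makes this work is to peel off three glides and treat the remaining product $[U]A$ as a fresh base point $B\in\E$, so that an arbitrarily high-dimensional closure instance collapses to a three-dimensional instance based at $B$; checking that the six faces $s_iB$ and $s_is_jB$ lie in $\E$ is exactly where the induction hypothesis is consumed.
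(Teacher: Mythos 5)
Your proposal is correct and follows essentially the same route as the paper: the same reduction via Lemmas \ref{simple} and \ref{simple+}, and the same induction on $\card(S)$ in which the 3-cube condition is applied to $s_1,s_2,s_3$ based at $B=[U]A=s_4\cdots s_kA$, with the induction hypothesis supplying the seven required vertices. Your explicit bookkeeping of $[U\cup T]A=[T]B$ just spells out what the paper leaves implicit.
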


The
  3-cube condition on $\E$  may be reformulated by saying that if seven vertices of a 3-cube in $G$ belong to $\E$, then so does the  eighth vertex. This condition is always fulfilled when $\E=G$ or, more
generally, when $\E$ is a subgroup of $G$.


 \begin{proof}  Lemmas \ref{simple}  and \ref{simple+} imply that  $X_\E$ is
nonpositively curved if and only if for every $A\in \E$,
 each    finite set   of independent
glides $S$    satisfying $(\ast)$ is cubic and    $[S]A\in \E$. We
must only show that the inclusion $[S]A\in \E$ can be replaced with
the   3-cube condition.
 One direction is obvious: if $A, s_1, s_2, s_3\in G$ satisfy the assumptions of  the 3-cube condition, then
the set  $S=\{s_1,s_2, s_3\}$  satisfies $(\ast)$ and so
$s_1s_2s_3 A=[S]A\in
  \E$.
 Conversely, suppose that $\E$ is regular and meets the 3-cube condition.  We must  show
that $[S]A\in \E$ for any $A\in \E$ and any   finite set
 of independent glides $S \subset G$  satisfying $(\ast)$.   We proceed by induction on
$k=\card(S)$. For $k=0$, the claim follows from the inclusion $A\in
\E$. For $k=1,2$, the claim follows from $(\ast)$. If $k\geq 3$,
then the induction assumption guarantees that $[T]A\in \E$ for any
proper subset $T\subset S$. If $S=\{s_1,...,s_k\}$, then applying
the 3-cube condition   to $s_1,s_2,s_3$ and the element $s_4 \cdots
s_{k}A $ of $\E$, we obtain that $[S]A\in \E$.
 \end{proof}

 \subsection{Regular gliding systems}\label{regglsy}  A gliding system in a group $G$ is {\it regular} if all finite sets of independent glides in $G$  are cubic. This condition implies that
 all
subsets of $G$ are regular.  Though we   treat arbitrary gliding systems, we are mainly interested in regular ones.
Without much loss, the reader may focus on the regular   gliding systems.

  \begin{corol}\label{simpleNEWBIS}  For a  group $G$ with a regular gliding system,
the glide complex of    a  set $\E\subset G$  is nonpositively curved if and only if $\E$ satisfies the 3-cube condition.
\end{corol}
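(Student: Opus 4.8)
The plan is to read this off directly from Theorem~\ref{simpleNEW} once the hypothesis of a \emph{regular gliding system} is brought to bear. Theorem~\ref{simpleNEW} characterizes nonpositive curvature of $X_\E$ by the conjunction of two conditions on $\E$: first, that $\E$ is regular (in the subset sense: for every $A\in\E$, every finite set of independent glides $S\subset G$ satisfying $(\ast)$ is cubic), and second, that $\E$ satisfies the 3-cube condition. The entire content of the corollary is therefore to observe that, under the standing assumption that the gliding system itself is regular, the first of these conditions is automatic for \emph{every} subset $\E\subset G$, so that only the 3-cube condition survives.

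The one thing I would make explicit is why regularity of the gliding system forces regularity of every subset. The point is that cubicity is an intrinsic property of a set of glides: by definition a finite set of independent glides $S$ is cubic precisely when $[T_1]\neq[T_2]$ for distinct subsets $T_1,T_2\subset S$, a condition referring only to the glides in $S$ and not to any element $A$ or to the set $\E$. A regular gliding system is exactly one in which \emph{all} finite sets of independent glides are cubic. Now for any $\E\subset G$ and any $A\in\E$, a finite set of independent glides satisfying $(\ast)$ is in particular a finite set of independent glides, hence cubic by regularity of the gliding system. Thus every $\E\subset G$ is regular, which is the implication already recorded in Section~\ref{regglsy}.

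Combining these two observations, Theorem~\ref{simpleNEW} reduces, in the presence of a regular gliding system, to the assertion that $X_\E$ is nonpositively curved if and only if $\E$ satisfies the 3-cube condition, which is the corollary. I expect no genuine obstacle here: the argument is a direct specialization of the preceding theorem, and the only subtlety is the bookkeeping remark that cubicity does not depend on the chosen basepoint $A$ or on $\E$, so that subset-regularity is subsumed by regularity of the gliding system.
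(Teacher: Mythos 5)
Your proposal is correct and is exactly the paper's (implicit) argument: the corollary follows from Theorem~\ref{simpleNEW} together with the observation in Section~\ref{regglsy} that a regular gliding system makes every subset $\E\subset G$ regular, since cubicity is an intrinsic property of a finite set of independent glides. Your explicit justification of that last point is accurate and fills in the only detail the paper leaves to the reader.
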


By Section \ref{Cubed complexes}, if $X_\E$ is nonpositively curved,
then the universal covering   of every finite-dimensional component
of $X_\E$ is a CAT(0)-space. The component itself is then an
Eilenberg-MacLane space of type $K(\pi,1)$  where $\pi$ is the
corresponding glide group. Note that  $\dim X_\E$ is the maximal dimension of a cube in~$\E$.

 \begin{corol}\label{simpleNEWBIS++++}  For a  group $G$ with a regular gliding system,
the glide complex $X_G$  is nonpositively curved.
\end{corol}

 \subsection{Examples}\label{examsec2}
 It is easy to see that the
 gliding systems  in Examples  \ref{exam}.2--6 are  regular.    Corollaries \ref{simpleNEWBIS} and \ref{simpleNEWBIS++++} fully
 apply to these   systems.  The gliding system   in Example   \ref{exam}.1, generally speaking, is not regular.

   Here are a few further remarks on these examples.

  In (\ref{exam}.2),  $X_G $ is the complete
  graph with the set of vertices $G$.  The    action of $G$ on $X_G$
 is free
 if and only if $G$ has no elements  of order 2.  All subsets of $G$ satisfy the
 3-cube condition and the corresponding glide groups are free.

  In (\ref{exam}.3), if  the rank, $n$, of $G$ is finite, then  $X_G=\RR^n$.


 In (\ref{exam}.4), $X_G$ is simply-connected,   the   action of $G$ on $X_G$
 is free, and  the projection $ X_G\to X_G/G$ is the universal covering
of
 $X_{G}/G$. Composing the cells  of $ X_G$ with this projection we turn $X_G/G$ into a cubed complex  called the
{\it Salvetti complex}, see \cite{Ch}.  This complex has only one
0-cell whose link   is isomorphic to the link of any vertex
of $X_G$ and  is a flag complex. This recovers the well known fact
that the Salvetti complex is nonpositively curved. Clearly, $\dim X_{G}
=\dim (X_{G}/G)$ is the maximal
  number of  vertices of a  complete subgraph   of~${\Gamma}$.

In (\ref{exam}.5),  all elements of $G$ have order 2 and the     action of $G$ on $X_G$
 is not free.  If the set $E$ is finite in  (\ref{exam}.5),  (\ref{exam}.6), then $X_G$ is finite dimensional.

In Example  \ref{exam}.3  with $n\geq 3$ and  in Examples \ref{exam}.4--6, the 3-cube condition
  holds for some but not all~$\E \subset G $.

 \section{Inclusion homomorphisms}\label{incmaps}

In   the setting of Section~\ref{cubecomplexX}, a subset    $ \F$ of $
\E $ determines a subcomplex
   $ X_{\F}$ of~$ X_{\E}$. We formulate    conditions   ensuring that the   inclusion $X_{\F}\hookrightarrow X_{\E}$  induces an injection of the
fundamental  groups.

   \subsection{The square condition}\label{incl++} Let $G$ be a group   with a
   gliding system and  $\E\subset G$. We say that a  set $\F  \subset \E$
   satisfies the {\it square condition   $\rel \E$}, if for any $A\in \F$ and
   any independent glides $s,t\in G$ such that $sA, tA\in \F, stA \in
   \E$, we necessarily have $stA\in \F$. For example, the
   intersection of $\E$ with any subgroup of~$G$  satisfies the  square condition   $\rel
   \E$.

 \begin{theor}\label{ecu} Let $\E\subset G$ be a regular set    satisfying  the 3-cube condition  and such that $\dim X_\E<\infty$. Let $\F$ be a subset of $\E$
 satisfying the square condition $\rel \E$. Then  the
 inclusion homomorphism $\pi_1(X_{\F}, A) \to \pi_1(X_\E, A)$ is
 injective for all $A\in \F$.
\end{theor}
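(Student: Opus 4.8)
The plan is to prove injectivity of the inclusion homomorphism $\pi_1(X_{\F},A) \to \pi_1(X_\E,A)$ by exhibiting $X_\F$ as a \emph{locally convex} (or locally isometrically embedded) subcomplex of the nonpositively curved complex $X_\E$. By Theorem~\ref{simpleNEW} and the hypotheses on $\E$, the complex $X_\E$ is nonpositively curved and finite-dimensional, so by the results quoted in Section~\ref{Cubed complexes} each component of its universal cover is a CAT(0)-space. The standard fact I would invoke (see \cite{BH}, II.4) is that a locally convex connected subcomplex of a nonpositively curved cubed complex is itself nonpositively curved, its universal cover embeds isometrically and convexly into that of the ambient complex, and consequently the inclusion is $\pi_1$-injective. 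Thus the whole theorem reduces to a \emph{local} combinatorial check.

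The key step is therefore to verify that $X_\F$ is locally convex in $X_\E$, which at the level of cubed complexes amounts to a condition on links: for each vertex $A \in \F$, the link $LK_\F(A)$ must be a \emph{full} subcomplex of $LK_\E(A)$. Unpacking the description of links from the proof of Lemma~\ref{simple}, the vertices of $LK_\E(A)$ are glides $s$ with $sA\in\E$, and a set of such glides spans a simplex precisely when it is an independent (cubic) set all of whose sub-glidings land in $\E$. Fullness of $LK_\F(A)$ says: whenever a collection of vertices of $LK_\F(A)$ spans a simplex in $LK_\E(A)$, that simplex already lies in $LK_\F(A)$. I would reduce this, via the flag property of $LK_\E(A)$ (guaranteed by nonpositive curvature) together with Lemma~\ref{simple+}, to the pairwise case: it suffices to show that if $s,t$ are glides with $sA,tA\in\F$ and $\{s,t\}$ spans an edge in $LK_\E(A)$ (so $s,t$ are independent and $stA\in\E$), then $stA\in\F$. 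But this is \emph{exactly} the square condition $\rel\E$ satisfied by $\F$. An induction on simplex dimension, feeding the square condition into the flag structure of the ambient link, then upgrades the pairwise statement to full simplices: given a cubic set $S$ with all $sA\in\F$ ($s\in S$) and $[S]A\in\E$, repeated application of the square condition shows $[T]A\in\F$ for every $T\subset S$, whence the simplex lies in $LK_\F(A)$.

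Concretely the induction runs as follows. Fix $A\in\F$ and a set of independent glides $S$ spanning a simplex of $LK_\E(A)$ with every vertex in $LK_\F(A)$, i.e.\ $sA\in\F$ for all $s\in S$. I claim $[T]A\in\F$ for all $T\subset S$. For $|T|\le 1$ this is the hypothesis; for $|T|=2$ it is the square condition $\rel\E$ (using that $[T]A\in\E$ because $S$ is a simplex of $LK_\E(A)$). For larger $T$, write $T=T'\cup\{t\}$; by induction $[T']A\in\F$ and $tA\in\F$ (the latter from the $|T|=1$ case after suitable re-basing using the equivalence of based cubes described in Section~\ref{cubecomplexX}), and the glides involved are independent with the product vertex in $\E$, so a single application of the square condition gives $[T]A\in\F$. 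The re-basing is the one delicate bookkeeping point: the square condition must be applied at the vertex $[T']A$ rather than at $A$, so I must check that the relevant glidings remain independent and that the intermediate vertices genuinely lie in $\F$, which follows because $\F$ is closed under the glidings appearing in the induction.

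The main obstacle I anticipate is not the combinatorics of the link computation but cleanly justifying the passage from local convexity to $\pi_1$-injectivity in the possibly-disconnected, merely finite-dimensional (not compact) setting. One must ensure that the component of $A$ in $X_\F$ sits inside a single component of $X_\E$ and that the convex-subcomplex machinery of \cite{BH} applies to each component of the universal cover; the finite-dimensionality hypothesis $\dim X_\E<\infty$ is what makes Gromov's theorem and the CAT(0) convexity arguments available. Once the link fullness is established and the ambient complex is known to be nonpositively curved, the injectivity is then a formal consequence of the fact that a local isometry between nonpositively curved complexes lifts to an isometric embedding of CAT(0) universal covers, which is injective on $\pi_1$.
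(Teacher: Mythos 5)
Your proposal is correct and follows essentially the same route as the paper: both show that the inclusion $X_{\F}\hookrightarrow X_{\E}$ is a local isometry by checking that $LK(A;X_{\F})$ is a full subcomplex of $LK(A;X_{\E})$ using the square condition, and then invoke the $\pi_1$-injectivity of local isometries between finite-dimensional nonpositively curved cubed complexes (Lemma~\ref{Wise}). The only cosmetic difference is that the paper upgrades the pairwise (edge) case of fullness to full simplices by first observing that $\F$ inherits the 3-cube condition, so that $LK(A;X_{\F})$ is flag, whereas you run an explicit induction with repeated re-basing; the two amount to the same computation.
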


Theorem~\ref{ecu}  is proven in Section \ref{ecu+} using  material
of Section \ref{llocal}.

We say that a set $\E\subset G$ satisfies the {\it square condition} if it satisfies the square
condition $\rel G$ that is   for any $A\in \E$ and
   any independent glides $s,t\in G$ such that $sA, tA\in \E $, we necessarily have $stA\in \E$.

\begin{corol}\label{simpleNEW++-}  If  $\dim X_G<\infty$, then for every   regular set $\E\subset G$ satisfying the square
condition  and every   $A \in  \E$,   the
 inclusion homomorphism $\pi_1(X_{\E}, A) \to \pi_1(X_G, A)$ is
 injective.
\end{corol}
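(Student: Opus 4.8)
The plan is to obtain this as the special case of Theorem~\ref{ecu} in which the ambient regular set is the whole group $G$ and the distinguished subset is $\E$ itself. Concretely, I would invoke Theorem~\ref{ecu} with its ``$\E$'' taken to be $G$ and its ``$\F$'' taken to be our $\E$; the desired injection $\pi_1(X_\E, A)\to \pi_1(X_G,A)$ is then literally the conclusion of that theorem. Thus the whole task reduces to checking that, under this dictionary, the hypotheses of Theorem~\ref{ecu} are in force.

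First I would verify the three conditions Theorem~\ref{ecu} imposes on its ambient set, here $G$. The requirement $\dim X_G<\infty$ is part of the hypothesis (and it also gives $\dim X_\E<\infty$, since $X_\E\subset X_G$). The 3-cube condition is automatic for $G$: if seven vertices of a 3-cube in $G$ lie in $G$, then so does the eighth, simply because the eighth vertex is by construction an element of $G$; this is the remark recorded right after Theorem~\ref{simpleNEW}. The remaining condition, regularity of the ambient set, is the one point needing attention: $G$ is a regular set precisely when every finite set of independent glides is cubic, i.e.\ when the gliding system is regular. Under that standing assumption the hypothesis holds, and it is the natural setting for the corollary; note that it also makes $\E$ automatically regular, matching the stated requirement on $\E$.

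Next I would match the condition on the subset. Theorem~\ref{ecu} asks that $\F=\E$ satisfy the square condition $\rel G$, namely: for every $A\in \E$ and independent glides $s,t$ with $sA,tA\in \E$ and $stA\in G$, one has $stA\in \E$. Since $stA\in G$ holds automatically, this is word for word the (unrelativized) square condition assumed on $\E$ in the statement. Hence $\E$ satisfies the square condition $\rel G$, and all hypotheses of Theorem~\ref{ecu} are met. Applying that theorem with $(\F,\,\text{ambient set})=(\E,G)$ yields that $\pi_1(X_\E, A)\to \pi_1(X_G, A)$ is injective for every $A\in \E$, which is the assertion.

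I expect no genuine obstacle beyond Theorem~\ref{ecu} itself: the corollary is a direct specialization, and the only substantive work is the bookkeeping above---most notably recognizing that the square condition on $\E$ is exactly the square condition $\rel G$, that $G$ trivially satisfies the 3-cube condition, and that the regularity powering Theorem~\ref{ecu} is the regularity of the ambient set $G$. The subtle point worth stating carefully in the write-up is precisely this last identification, so that the reader sees that the hypothesis ``$\dim X_G<\infty$'' together with a regular gliding system supplies exactly the ingredients Theorem~\ref{ecu} consumes on its ambient set.
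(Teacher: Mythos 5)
Your proof is correct and is exactly the paper's intended argument: the corollary is stated without proof as the direct specialization of Theorem~\ref{ecu} in which that theorem's $\E$ is taken to be $G$ and its $\F$ is taken to be the present $\E$, using that $G$ trivially satisfies the 3-cube condition and that the square condition on $\E$ is by definition the square condition $\operatorname{rel} G$. You are also right to single out the one delicate point, namely that Theorem~\ref{ecu} requires the ambient set $G$ to be regular (equivalently, the gliding system to be regular, which is slightly stronger than the printed hypothesis that the subset $\E$ is regular); this is the reading the paper intends, as regularity of $G$ is what makes $X_G$ nonpositively curved so that Lemma~\ref{Wise} applies.
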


It would be interesting to find out whether the conditions $\dim
X_\E<\infty$ and $\dim X_G<\infty$ in these statements are
necessary.

\subsection{Cubical maps}\label{llocal}  Let $X$ and $Y$ be simple cubed complexes. A {\it cubical map}
$f: X\to Y$
  is a continuous map whose composition
with any $k$-cell $I^k\to X$ of $X$ splits as   a composition of a
self-isometry of $I^k$ with a $k$-cell $I^k\to Y$ of $Y$ for all
$k\geq 0$. For every  $A\in X^0$, such a   map $f $ induces a
simplicial
  map $f_A:LK(A)\to LK(f(A))$.   The cubical map $f $ is a {\it local isometry} if  for all $A\in X^0$, the map $f_A$
is an embedding onto a full subcomplex of $LK(f(A))$. Recall that a
simplicial subcomplex $Z'$ of a simplicial  complex $Z$ is  {\it  full}  if any simplex of
$Z$ with vertices in $Z'$ entirely lies in $Z'$.

 \begin{lemma}\label{Wise} \cite[Theorem 1.2]{CW} If $f:X\to Y$ is a local isometry of nonpositively curved finite-dimensional
 cubed
 complexes, then the induced homomorphism $f_*: \pi_1(X,A)\to \pi_1(Y,
 f(A))$ is injective for all $A\in X$.
   \end{lemma}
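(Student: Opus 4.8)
The statement is the Crisp--Wise local isometry theorem (\cite{CW}), so the plan is to deduce it from the CAT(0) geometry of cubed complexes supplied by \cite{BH}. First I would metrize $X$ and $Y$ as piecewise-Euclidean complexes, declaring each $k$-cube to be a unit Euclidean cube $I^k$. Since $X$ and $Y$ are finite-dimensional they have only finitely many isometry types of cells, so by Bridson's theorem (\cite{BH}, Chapter I.7) the resulting length metrics are complete and geodesic. Because $X$ and $Y$ are nonpositively curved, each satisfies Gromov's link condition, hence is locally CAT(0); applying the Cartan--Hadamard theorem to the universal coverings $\tilde X \to X$ and $\tilde Y \to Y$, which inherit both the cubed structure and the link condition, shows that $\tilde X$ and $\tilde Y$ are CAT(0) spaces.

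Next I would reinterpret the combinatorial hypothesis on $f$ metrically. By definition $f$ is cubical and each induced map $f_A : LK(A)\to LK(f(A))$ embeds onto a full subcomplex. This is exactly the condition guaranteeing that $f$ is a local isometry in the metric sense: a small ball around any $x\in X$ maps isometrically onto its image, since the embedding part of $f_A$ forbids $f$ from folding cubes together at $x$ while the fullness forbids local shortcuts appearing near $f(x)$ (compare \cite{BH}, Chapter II.5). Lifting $f$ through the coverings to a map $\tilde f : \tilde X \to \tilde Y$ of universal coverings, the map $\tilde f$ is again a local isometry (covering projections are local isometries) and is $f_*$-equivariant, meaning $\tilde f(\tilde A \cdot g) = \tilde f(\tilde A)\cdot f_*(g)$ for every deck transformation $g\in \pi_1(X,A)$.

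The crux, and the step I expect to be the main obstacle, is to promote $\tilde f$ from a local isometry to a global isometric embedding. Given points $p,q \in \tilde X$, let $[p,q]$ be the geodesic joining them, which exists and is unique because $\tilde X$ is CAT(0). Since $\tilde f$ is a local isometry, the image $\tilde f([p,q])$ is a local geodesic in $\tilde Y$; but in the CAT(0) space $\tilde Y$ every local geodesic is a global geodesic, so $\tilde f([p,q])$ is \emph{the} geodesic from $\tilde f(p)$ to $\tilde f(q)$ and has the same length, giving $d(\tilde f(p),\tilde f(q)) = d(p,q)$. In particular $\tilde f$ is injective. Finally, if $g\in \pi_1(X,A)$ lies in $\Ker f_*$, then $\tilde f(\tilde A\cdot g) = \tilde f(\tilde A)\cdot f_*(g) = \tilde f(\tilde A)$, whence $\tilde A\cdot g = \tilde A$ by injectivity of $\tilde f$; since the deck action is free this forces $g=1$. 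Thus $f_*$ is injective for every basepoint, as required.
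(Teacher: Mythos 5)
The paper offers no proof of this lemma: it is quoted directly from Crisp and Wiest \cite[Theorem 1.2]{CW}, so there is nothing internal to compare against. Your argument is a reconstruction of the standard proof of that cited theorem, and its skeleton is the right one: equip $X$ and $Y$ with the piecewise-Euclidean cubical metrics (finite-dimensionality giving finitely many isometry types of cells, hence complete geodesic metrics by Bridson's theorem), pass to CAT(0) universal coverings via Cartan--Hadamard, lift $f$ to an $f_*$-equivariant local isometric embedding $\tilde f:\tilde X\to\tilde Y$, use the fact that in a CAT(0) space every local geodesic is a global geodesic to conclude that $\tilde f$ preserves distances and is injective, and finish with freeness of the deck action. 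All of these deductions are correct, and the last three steps are essentially \cite{BH}, Proposition II.4.14.

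The one place where the proposal asserts rather than proves is the bridge from the combinatorial hypothesis to the metric one: the claim that ``$f_A$ embeds $LK(A)$ onto a full subcomplex of $LK(f(A))$ for every $0$-cell $A$'' implies that $f$ is an isometric embedding on a neighborhood of \emph{every} point of $X$. This is precisely the technical content of \cite[Theorem 1.2]{CW} (and of the analogous statements in \cite{HW}); your two clauses about ``no folding'' and ``no shortcuts'' are the correct intuition but not an argument. In particular you must address points lying in the interiors of positive-dimensional cubes, where the link is a spherical join and the hypothesis is only imposed at vertices, and you must show that fullness of the image subcomplex rules out pairs of nearby points of $X$ whose images are joined in $\tilde Y$ by a short geodesic cutting through a cube not in the image of $f$. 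If you grant that step (or quote it from \cite{BH}, Chapter II.5, or \cite{CW}), the rest of your proof is complete and is, as far as one can tell, the same route the cited source takes.
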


   \subsection{Proof of Theorem~\ref{ecu}}\label{ecu+}   Since $\E$ is regular,
   so is $\F\subset \E$. The 3-cube condition on $\E$ and the square condition on
   $\F$     imply that $\F$ satisifes the 3-cube
   condition. By Theorem~\ref{simpleNEW},   $X_\E$ and $X_{\F}$ are nonpositively
   curved. By
   assumption, the cubed complex $X_\E$ is finite-dimensional and so
   is its subcomplex $X_{\F}$. We claim that the inclusion $X_{\F}\hookrightarrow
   X_{\E}$ is a local isometry. Together with Lemma~\ref{Wise} this
   will imply the   theorem.

To prove our claim,   pick any $A\in \F$ and consider the simplicial
complexes $L'=LK(A, X_{\F})$, $L=LK(A, X_{\E})$.  The inclusion
$X_{\F}\hookrightarrow
   X_{\E}$ induces an embedding $L'\hookrightarrow L$, and we need only to verify that the image of $L'$ is  a full
subcomplex of~$L$.  Since   $L'$ is a  flag simplicial
complex, it suffices to verify that any  vertices of~$L'$
adjacent in~$L $ are  adjacent in~$L'$.
This follows
   from the square condition  on~$\F$.

   \section{Typing homomorphisms}\label{embArtin}

\subsection{Artin
groups  of glides}\label{prelimArtin++}   A group $G$ carrying  a
gliding system $(\G,\I)$ determines a right-angled Artin group
$\A=\A(G)$ with generators $\{g_s\}_{s \in \G }$ and    relations $g_s
g_t g_s^{-1} g_t^{-1}=1$ when $(s, t)\in \I $. This group   is
associated with the graph whose vertices are the glides in $G$ and
whose edges connect independent glides. By Example \ref{exam}.4, the group $\A $  carries a
gliding system with glides    $\{g_s^{\pm 1}\}_{s\in \G}$. Two glides
$g_s^{\pm 1}$ and $g_t^{\pm 1}$ in $\A$  are independent if
and only if $s\neq t$ and $(s,t)\in \I$.

We shall relate the group $\A $ to the glide groups associated with a  set $\E
\subset G$. To this end,  we introduce a notion of an orientation on
  $\E $. An {\it orientation} on   $\E$ is  a
choice of direction on each 1-cell of the CW-complex $X_\E$ such
that    the opposite sides of any (square) 2-cell of $X_\E$ point
towards each other on the boundary loop. This means that  for any
based square  $(A,\{s,t\})$ with $A, sA, tA, stA\in \E$, the 1-cells
connecting $A$ to $sA$ and $tA$ to $stA$ are directed either towards
$sA, stA$ or towards $A,tA$ (and similarly with $s$ and $t$
exchanged). A set $\E $ is {\it orientable} if it has at least one
orientation and is {\it oriented} if it has a
distinguished orientation. An orientation of $\E$ induces an
orientation of any  subset  $\F\subset  \E$ via the inclusion
$X_{\F}\subset X_\E$. Therefore, all subsets of an orientable set
are orientable.  These definitions   apply, in particular,  to $\E=G$. Examples of oriented sets will be given in Section~\ref{lloccgccgcgl}.


  Let $\E\subset G$ be an oriented set. We assign to each 1-cell $e$ of $X_\E$ the glide
$\vert e\vert= BA^{-1}$ where $A\in \E$ is the initial endpoint of
$e$ and $B\in \E$ is the terminal endpoint of $e$ with respect to
the distinguished orientation. Consider a  path $\alpha$
in $X_\E$ joining two 0-cells and  formed by $n\geq 0$ consecutive
 1-cells $e_1,...,e_n$ oriented so that the terminal endpoint of
$e_k$ is the initial endpoint of $e_{k+1}$ for $k=1, ..., n-1$. The
orientation of   $e_k$   may coincide or not with that
given by the orientation of $\E$. We set $\nu_k=+1$ or $\nu_k=-1$,
respectively. Set
\begin{equation}\label{pathee} \mu(\alpha)=  g_{\vert e_1\vert}^{\nu_1} \,  g_{\vert e_2\vert}^{\nu_2} \cdots g_{\vert e_n\vert}^{\nu_n} \in \A=\A(G)  .\end{equation}
 It is clear that   $\mu(\alpha)$ is preserved under  inserting in
the sequence $e_1,...,e_n$ two opposite 1-cells  or four
1-cells forming the boundary of a 2-cell. Therefore   $\mu(\alpha)$ is
preserved under homotopies of $\alpha$ in $X_\E$ relative to the
endpoints. Applying $\mu$ to loops
based at $A\in \E$, we obtain a   homomorphism $\mu_A: \pi_1(X_\E,
A)\to \A $. Following the terminology of \cite{HW}, we call $\mu_A$ the {\it typing homomorphism}.

\begin{theor}\label{simpleNEW++}  If there is  an upper bound
on the number
 of pairwise independent glides in $G$, then for any   oriented regular    set $\E\subset G$  satisfying  the square condition and any $A\in \E$, the typing homomorphism  $\mu_A: \pi_1(X_\E,
A)\to \A $  is an injection.
\end{theor}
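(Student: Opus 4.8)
The plan is to prove injectivity of $\mu_A$ by constructing a geometric model for $\A$ in which $X_\E$ embeds as a locally isometric subcomplex, and then invoking Lemma~\ref{Wise}. The natural target is the Salvetti complex of $\A$, or rather its universal cover. By Example~\ref{exam}.4, the right-angled Artin group $\A=\A(G)$ carries its own regular gliding system, with glides $\{g_s^{\pm 1}\}_{s\in\G}$ and independence $(g_s^\varepsilon,g_t^\mu)\in\I_\A$ iff $s\neq t$ and $(s,t)\in\I$. Applying Corollary~\ref{simpleNEWBIS++++} to this regular gliding system, the glide complex $X_\A$ is nonpositively curved; moreover, since there is a uniform bound on the number of pairwise independent glides in $G$, there is such a bound for $\A$ as well, so $\dim X_\A<\infty$. (In fact $X_\A$ is the universal cover of the Salvetti complex, as noted in Section~\ref{examsec2}.) On the source side, $\E$ is regular and satisfies the square condition, hence a fortiori the 3-cube condition, so by Theorem~\ref{simpleNEW} the complex $X_\E$ is nonpositively curved, and $\dim X_\E<\infty$ again by the bound on independent glides.

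The heart of the argument is to build a cubical map $\Phi:X_\E\to X_\A$ realizing $\mu_A$ and to check it is a local isometry. The construction uses the orientation on $\E$. On $0$-cells, fix the basepoint and send $A\in\E$ to the element $\mu(\alpha_A)\in\A$ determined by any path $\alpha_A$ in $X_\E$ from the fixed basepoint to $A$; this is well-defined on the component of the basepoint because $\mu$ is homotopy-invariant, as established in the text preceding \eqref{pathee}. On each oriented $1$-cell $e$ from $A$ to $B$ with $|e|=BA^{-1}=s\in\G$, the map sends $e$ to the $1$-cube in $\A$ spanned by the glide $g_s$ based at $\Phi(A)$, so that $\Phi(B)=g_s\Phi(A)$. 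One must verify that squares map to squares: if $(A,\{s,t\})$ is a based square in $\E$, then $s,t$ are independent in $G$, so $g_s,g_t$ are independent in $\A$, and the image four points form a based square in $\A$; the orientation compatibility of $\E$ guarantees the directions match up so that $\Phi$ sends the $2$-cell cubically onto the corresponding $2$-cell of $X_\A$. Higher cubes are handled the same way, using that the image glides remain pairwise independent.

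The main obstacle, and the step requiring the orientation and square conditions in an essential way, is verifying that $\Phi$ is a \emph{local isometry}, i.e.\ that for each $A\in\E$ the induced simplicial map $\Phi_A:LK_\E(A)\to LK_\A(\Phi(A))$ is an embedding onto a full subcomplex. The induced map on links sends the vertex of $LK_\E(A)$ corresponding to a glide $s$ (with $sA\in\E$) to the vertex of $LK_\A(\Phi(A))$ corresponding to the glide $g_s^{\pm1}$, with sign dictated by the orientation. Injectivity on vertices follows because distinct glides $s\neq t$ in $G$ yield distinct generators $g_s\neq g_t^{\pm1}$ in $\A$, after abelianizing. Fullness is the crux: I must show that if two vertices of $\Phi_A(LK_\E(A))$, coming from glides $s,t$ with $sA,tA\in\E$, are adjacent in $LK_\A(\Phi(A))$, then they are already adjacent in $LK_\E(A)$. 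Adjacency in the target means $(g_s,g_t)\in\I_\A$, i.e.\ $s,t$ are independent in $G$; I then need $stA\in\E$, which is supplied \emph{precisely} by the square condition on $\E$. This argument parallels the proof of Theorem~\ref{ecu} in Section~\ref{ecu+}, where fullness of a link followed from a square condition. Once fullness is established, Lemma~\ref{Wise} applies directly to the local isometry $\Phi$ and gives that $\mu_A=\Phi_*$ is injective, completing the proof.
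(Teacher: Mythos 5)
Your overall strategy --- realize $\mu_A$ by a cubical map into a nonpositively curved model for $\A$, verify the local isometry condition on links using the square condition, and conclude with Lemma~\ref{Wise} --- is exactly the paper's strategy. But there is a genuine gap in your choice of target: you build the map $\Phi$ into $X_\A$, the glide complex of the Artin gliding system, and as you yourself note (and as Section~\ref{examsec2} states), $X_\A$ is \emph{simply connected} --- it is the universal cover of the Salvetti complex. A map $X_\E\to X_\A$ induces the trivial homomorphism on $\pi_1$, so it cannot realize $\mu_A$, and applying Lemma~\ref{Wise} to it would ``prove'' that $\pi_1(X_\E,A)$ injects into the trivial group. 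The symptom of this error already appears in your construction on $0$-cells: the element $\mu(\alpha_A)\in\A$ depends on the choice of the path $\alpha_A$ from the basepoint to $A$, not just on $A$. Homotopy invariance of $\mu$ is only rel endpoints; two paths from the basepoint to $A$ differ by a loop, and $\mu$ of that loop is precisely the (generally nontrivial) element $\mu_A$ is supposed to detect. So $\Phi$ is not well defined on $X^0_\E$ unless $\mu_A$ is already trivial.

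The fix, which is what the paper does, is to map into the Salvetti complex $Y=X_\A/\A$ rather than its universal cover. Then $Y$ has a single $0$-cell $\ast$ with $\pi_1(Y,\ast)=\A$, there is no choice to make on $0$-cells, and a point $(A,S,x)$ of $X_\E$ is simply sent to the pair $(\{g_{\vert s\vert}^{\varepsilon_s}\}_{s\in S},\,y)$ with $y(g_{\vert s\vert}^{\varepsilon_s})=x(s)$, forgetting the base element entirely. The rest of your argument then goes through essentially verbatim and matches the paper: the link of $\ast$ in $Y$ has two vertices $w_s^{\pm}$ per glide $s$ of $G$, the induced map on links sends $v_s$ to $w_{\vert s\vert}^{\varepsilon_s}$ and is injective because $s$ is recoverable from $\vert s\vert$ and $\varepsilon_s$ (this, rather than $g_s\neq g_t^{\pm 1}$ after abelianization, is the point needed to separate $s$ from $s^{-1}$), and fullness follows from the square condition together with flagness of $LK_\E(A)$, exactly as in your last paragraph.
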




\begin{proof} 
 Consider the
cubed complex $X=X_{\A}$  associated with the gliding system in~$\A$ and the Salvetti complex $Y=X /\A$.  Recall that $\A=\pi_1(Y, \ast)$ where $\ast$ is the unique 0-cell of $Y$.     As explained in Section \ref{examsec2}, both
$X $ and $Y$ are nonpositively curved. The assumptions of the
theorem imply that the cubed complex $  X_\E $ is nonpositively
curved, and the spaces $X $, $Y$, $ X_\E $   are finite-dimensional. We claim
that the homomorphism $   \mu_A: \pi_1(X_\E, A)\to
\A =\pi_1(Y,\ast)$ is induced by a local isometry $X_\E\to Y$.
Together with Lemma~\ref{Wise}  this will imply the theorem.

Since the gliding system in $\A$ is regular, the points of $X $ are represented by  triples $(A\in
\A, \sal , x:\sal \to I)$ where $\sal$ is a  finite set of independent
glides in $\A$  that is     $S=\{g_s^{\varepsilon_s}\}_s$ where
$s$ runs over a finite set of   independent glides in $G$ and
$\varepsilon_s\in \{\pm 1\}$. The space $X $ is obtained by   factorizing  the set of
  such triples
  by the equivalence relation defined in Section~\ref{cubecomplexX}. The space $Y$ is obtained from $X $ by forgetting the first term, $A$, of the triple.
A point of $Y $ is represented by a  pair   (a  finite set of independent
glides $\sal\subset \A$, a map  $x:\sal \to I$). The space $Y $  is obtained by   factorizing  the set of
  such pairs by the equivalence relation generated by the
following relation: $( \sal,x) \sim ( \sal',x')$ when $
\sal\subset \sal', x=x'\vert_{\sal}, x'(\sal'\setminus \sal)=0$ or
there is   $T\subset \sal$ such that $  \sal'=\sal_T $,
$x'=x$ on $\sal\setminus T$, and $x'(t^{-1})=1-x(t)$ for all $ {t\in
T}$.

We now construct a cubical map $f:X_\E\to Y$.   The idea  is to map
each 1-cell, $e$,    of  $X_\E$   onto the 1-cell of $Y$ determined by
$g_{\vert e\vert} $ where $\vert e\vert \in G$ is the glide determined by the distinguished orientation of $e$  as
above. Here is a detailed definition of $f$. A point $a\in X_\E$ is
represented by a triple $(A\in \E, S ,x:S\to I)$ where
$S $ is a cubic set of glides in $G$ such that $[T]A\in \E$ for
all $T\subset S$. For  $s\in S$, set $\vert s \vert =\vert e_s
\vert$ where $e_s$ is the 1-cell of $X_\E$ connecting $A$ and $sA$.
By definition, $\vert s \vert =s^{\varepsilon_s}$ where
$\varepsilon_s=+1$ if $e_s$  is oriented towards $ sA$ and
$\varepsilon_s=-1$ otherwise. Let $f(a)\in Y$ be the point
represented by the pair  $(  \sal=\{g_{\vert s
\vert}^{\varepsilon_s}\}_{s\in S}, y:\sal \to I)$ where $y(g_{\vert
s \vert}^{\varepsilon_s})=x(s)$ for all $s\in S$. This yields a well-defined cubical map $f:X_\E\to Y$ inducing
$\mu_A$ in $\pi_1$.

It remains to show that $f$ is a local isometry.  The link  $L_A=LK_\E(A)$  of   $A \in \E$ in $X_\E$ was described  as a simplicial complex  in the proof of Lemma~\ref{simple}. It has
a vertex $v_s$ for every glide $s\in G$ such that $sA\in \E$. A set of
vertices $\{v_{s}\}_s$ spans a simplex in $L_A$
 whenever   $s$ runs over a cubic set of glides.
The link, $L$,  of
$\ast  $ in $Y$  has
two vertices $w^+_s$ and $w^-_s$ for every glide $s \in G $. A set of
vertices $\{w^{\pm}_{s}\}_s$ spans a simplex in $L$
 whenever   $s$ runs over a finite set of independent glides.   The map $f_A: L_A\to L$ induced by $f$ carries  $v_s$ to $w_{\vert s
\vert}^{\varepsilon_s}$. This map   is an embedding since we can recover   $s $ from
${\vert s
\vert}$ and ${\varepsilon_s}$. The square condition  on~$\E$ implies that any  vertices of~$L_A$
adjacent in~$L $ are  adjacent in~$L_A$.
Since   $L_A$ is a  flag simplicial
complex,   $f_A(L_A)$ is a full subcomplex of $L$.
\end{proof}

\begin{corol}\label{simpleNEW++-eee}  If  the gliding system in $G$ is regular, the number
 of pairwise independent glides in $G$ is bounded from above,  and $G$ is orientable in the sense of Section~\ref{prelimArtin++}, then        $\mu_A:\pi_1(X_G, A)\to \A $ is an injection for all $A\in G$.
\end{corol}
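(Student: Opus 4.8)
The plan is to derive the corollary directly from Theorem~\ref{simpleNEW++} by specializing to the set $\E=G$, so the entire task reduces to checking that $\E=G$ meets the three hypotheses of that theorem. The standing assumption of Theorem~\ref{simpleNEW++}, namely an upper bound on the number of pairwise independent glides in $G$, is one of our hypotheses and requires no further work; it also guarantees $\dim X_G<\infty$. It therefore remains only to verify that $G$, regarded as a subset of itself, is regular, oriented, and satisfies the square condition.

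First I would observe that $G$ is regular as a set. By the definition in Section~\ref{regglsy}, the hypothesis that the \emph{gliding system} is regular means precisely that every finite set of independent glides in $G$ is cubic, and it is recorded there that this forces all subsets of $G$, in particular $G$ itself, to be regular sets in the sense introduced before Theorem~\ref{simpleNEW}. Next, the square condition on $G$ (taken $\rel G$, as in Section~\ref{incl++}) is automatic: for any $A\in G$ and independent glides $s,t\in \G$ with $sA,tA\in G$, the product $stA$ again lies in $G$, so the required inclusion $stA\in G$ holds trivially. Finally, $G$ is orientable by hypothesis, so I fix any orientation on $G$; this turns $G$ into an oriented set and makes the typing homomorphism $\mu_A$ of Section~\ref{prelimArtin++} well defined.

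With these three verifications in hand, Theorem~\ref{simpleNEW++} applies verbatim to $\E=G$ and yields that $\mu_A:\pi_1(X_G,A)\to\A$ is injective for every $A\in G$, which is exactly the assertion of the corollary. There is no genuine obstacle here: all the substance is already contained in Theorem~\ref{simpleNEW++}, and the only points worth stating explicitly are that the square condition degenerates to a tautology when $\E=G$ (because $stA$ automatically belongs to $G$) and that regularity of the gliding system transfers to $G$ viewed as a subset of itself by Section~\ref{regglsy}.
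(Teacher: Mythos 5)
Your proposal is correct and is essentially the paper's own argument: the paper proves this corollary in one line by noting that the square condition on $\E=G$ is void, and then invoking Theorem~\ref{simpleNEW++}; your write-up just spells out the same three hypothesis checks (regularity of $G$ as a subset via Section~\ref{regglsy}, the tautological square condition, and the given orientability) in more detail.
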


This follows from Theorem \ref{simpleNEW++} because  the square condition on $G$ is void.

\subsection{Applications}\label{Applications}  Finitely generated right-angled Artin groups  are biorderable, residually
nilpotent, and   embed in $SL_n(\ZZ)$ for some $n$ (so are residually
finite), see \cite{DT}, \cite{DJ}, \cite{CW},  \cite{HsW}. These properties are
hereditary and   are shared by all subgroups of finitely
generated right-angled Artin groups. 
Combining with Theorem~\ref{simpleNEW++} we obtain the following.

\begin{corol}\label{simpleNEW++-nilp}  If the set of glides in $G$ is finite and an orientable   regular set $\E\subset G$  satisfies  the square condition,
 then for all $A\in \E$,  the group   $\pi_1(X_\E, A)$ is  biorderable, residually nilpotent, and
 embeds in $SL_n(\ZZ)$ for some $n$.
\end{corol}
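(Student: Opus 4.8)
The plan is to combine Theorem~\ref{simpleNEW++} with the hereditary properties of subgroups of finitely generated right-angled Artin groups recalled at the start of Section~\ref{Applications}. The strategy is to realize $\pi_1(X_\E, A)$ as a subgroup of the Artin group $\A=\A(G)$ via the typing homomorphism $\mu_A$, which is injective under our hypotheses, and then to invoke the fact that biorderability, residual nilpotence, and embeddability in $SL_n(\ZZ)$ pass to all subgroups.

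First I would check that the hypotheses of Theorem~\ref{simpleNEW++} are met. The assumption that the set of glides $\G$ is finite immediately gives a finite upper bound on the number of pairwise independent glides, since such a set is in particular a subset of $\G$. The set $\E$ is assumed regular, orientable, and to satisfy the square condition. Fixing an orientation of $\E$ (which exists by orientability) and any $A\in\E$, Theorem~\ref{simpleNEW++} applies and yields that the typing homomorphism $\mu_A\colon \pi_1(X_\E, A)\to \A$ is injective. Thus $\pi_1(X_\E, A)$ embeds as a subgroup of $\A$.

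Next I would observe that $\A=\A(G)$ is a \emph{finitely generated} right-angled Artin group: its generators are $\{g_s\}_{s\in\G}$ by the construction in Section~\ref{prelimArtin++}, and $\G$ is finite by hypothesis, so there are finitely many generators. Consequently $\A$ is biorderable, residually nilpotent, and embeds in $SL_n(\ZZ)$ for some $n$ (hence is residually finite), by the references cited in Section~\ref{Applications}. Since these three properties are hereditary—they are inherited by every subgroup—and since $\mu_A$ identifies $\pi_1(X_\E, A)$ with a subgroup of $\A$, the group $\pi_1(X_\E, A)$ enjoys the same three properties. As this holds for every $A\in\E$, the corollary follows.

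There is no serious obstacle here: the entire content is packaged into Theorem~\ref{simpleNEW++}, whose proof (via the local isometry $X_\E\to Y$ and Lemma~\ref{Wise}) is the real work. The only points requiring a moment's care are verifying that the finiteness of $\G$ supplies both the upper bound on pairwise independent glides and the finite generation of $\A$, and recalling that the listed properties are genuinely subgroup-closed. The proof is therefore essentially a short deduction from the theorem together with the hereditary nature of the relevant group-theoretic properties.
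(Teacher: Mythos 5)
Your proof is correct and follows exactly the paper's route: the corollary is stated in Section~\ref{Applications} as an immediate combination of Theorem~\ref{simpleNEW++} (injectivity of the typing homomorphism $\mu_A$ into the finitely generated right-angled Artin group $\A(G)$) with the hereditary properties of subgroups of such groups. Your added checks — that finiteness of $\G$ bounds the number of pairwise independent glides and makes $\A$ finitely generated — are exactly the implicit verifications the paper relies on.
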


\subsection{Remark} It would be interesting to deduce  Theorem \ref{simpleNEW++}   from the results of \cite{HW}. Are the glide complexes A-special in the sense of \cite{HW}?

\section{Presentations of glide groups}\label{redu}

Throughout this section, we fix   a group $G$ carrying a gliding
system and a set $\E\subset G$.   Under certain assumptions, we
obtain a presentation  of the glide group of~$\E$ by generators and
relations.

\subsection{Hulls}\label{mincu} Given  a set $J\subset \E$ and a
cube $Q $ in $ \E$ whose set of vertices contains $J$, we call $Q$ the   {\it hull of $J$ in
$\E$} if $Q$ is a   face of all cubes in $\E$ whose sets of vertices contain  $J$. We refer to
Section \ref{cubecomplexX} for the   definitions of cubes in $ \E$, faces, etc. If
  a hull of $J$
  exists,
    then it is   unique. If $J $ has only one element, then $J$ is a 0-cube and is its own hull.


 \begin{theor}\label{phi-+}    If    each
 2-element subset of $  \E$ has a hull  in $ \E$, then $X_\E$ is connected and for each $A_0\in \E
  $,
   the group $\pi_1(X_\E, A_0)$ is canonically isomorphic to  the group with generators $\{y_{A,B}\}_{A,B\in \E}$ subject to the following relations:
  $y_{A,C}=y_{A,B}\,  y_{B,C}$ for every triple   $A,B,C $ of vertices of a  cube in $\E$ and  $y_{A_0,A}=1$ for all $A\in \E$.
  The image of each $
y_{A,B}$ in $\pi_1(X_\E, A_0)$
  is represented by the loop in $X_\E$
composed of   a path from $A_0$ to $A$ in the hull of $\{A_0,A\}$, a
path from $A $ to $B$ in the hull of $\{A,B\}$, and a path from $B$
to $A_0$ in the hull of $\{A_0,B\}$. \end{theor}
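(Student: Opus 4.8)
The plan is to compute the fundamental groupoid $\pi_1(X_\E,\E)$ of $X_\E$ based at the whole set of $0$-cells $\E$, and then to extract from it the vertex group at $A_0$. First note that the hull hypothesis makes $X_\E$ connected: any two vertices $A,B$ lie in their common hull, which is a cube and hence path-connected, so $A$ and $B$ are joined by a path in $X_\E$. For each ordered pair $A,B\in\E$, let $Q_{A,B}$ denote the hull of $\{A,B\}$ and let $\eta_{A,B}\in\pi_1(X_\E,\E)$ be the class of a path from $A$ to $B$ inside $Q_{A,B}$. This class is well defined: $Q_{A,B}$ is a cube, hence simply connected, and it is embedded in $X_\E$ (Section~\ref{cubecomplexX}), so any two such paths are homotopic rel endpoints. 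I would first record the identities $\eta_{A,A}=1$ and $\eta_{B,A}=\eta_{A,B}^{-1}$, and the observation that for adjacent vertices (that is $B=sA$ with $s$ a glide) the edge from $A$ to $B$ is itself a cube containing $\{A,B\}$, so the minimal such cube $Q_{A,B}$ equals that edge and $\eta_{A,B}$ is the class of the corresponding $1$-cell.

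The heart of the argument is the identification of $\pi_1(X_\E,\E)$ with the groupoid $\mathcal{P}=\langle y_{A,B}\mid y_{A,C}=y_{A,B}\,y_{B,C}\rangle$ presented, in the category of groupoids with object set $\E$, by one generator for each ordered pair and one relation for each triple $A,B,C$ of vertices of a common cube in $\E$. Since $\pi_1$ depends only on the $2$-skeleton, I would use the standard edge-path description of $\pi_1(X_\E,\E)$ by generators the oriented $1$-cells and relators the boundaries of the $2$-cells (squares). Define $\Phi\colon\mathcal{P}\to\pi_1(X_\E,\E)$ by $y_{A,B}\mapsto\eta_{A,B}$; it respects the triple relations, for if $A,B,C$ are vertices of a cube $Q$ then $Q_{A,B},Q_{B,C},Q_{A,C}$ are all faces of $Q$, the three paths lie in the simply connected $Q$, and hence $\eta_{A,B}\eta_{B,C}=\eta_{A,C}$. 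Conversely, define $\Psi\colon\pi_1(X_\E,\E)\to\mathcal{P}$ on the edge-path presentation by sending the $1$-cell from $A$ to $B$ to $y_{A,B}$; one checks $\Psi$ is well defined by verifying that the boundary of the square on $A,sA,tA,stA$ maps to a consequence of the triples $A,sA,stA$ and $A,tA,stA$, hence to the identity of $\mathcal{P}$. Finally $\Psi\Phi=\mathrm{id}$ (express $\eta_{A,B}$ as a product of edges along a path in $Q_{A,B}$ and telescope using the triple relations, whose hypotheses hold because all intermediate vertices lie in the single cube $Q_{A,B}$) and $\Phi\Psi=\mathrm{id}$ (edges generate, and $\eta_{A,B}$ is the edge class for adjacent $A,B$). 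Thus $\Phi$ is an isomorphism of groupoids.

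It then remains to pass from the groupoid to the vertex group at $A_0$. I would invoke the standard fact that, for a connected groupoid given by a presentation, collapsing a spanning tree of the generating graph (that is, adjoining the relations $t=1$ for the tree edges $t$) yields a presentation of the vertex group at any chosen object. Here the family $\{y_{A_0,A}\}_{A\in\E}$ is the edge set of a star with center $A_0$, hence a spanning tree of the generating graph of $\mathcal{P}$; collapsing it amounts precisely to adjoining $y_{A_0,A}=1$ for all $A$, and so produces the asserted presentation of $\pi_1(X_\E,A_0)$. Tracing the tree-path rewriting carries $y_{A,B}$ to the loop $\eta_{A_0,A}\,\eta_{A,B}\,\eta_{B,A_0}$, which is exactly the loop described in the statement; I would also verify directly that the assignment $y_{A,B}\mapsto\eta_{A_0,A}\,\eta_{A,B}\,\eta_{B,A_0}$ respects all the relations, giving an explicit inverse and confirming the canonical isomorphism.

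The main obstacle I anticipate is the careful bookkeeping in the groupoid isomorphism: showing that $\eta_{A,B}$ is genuinely independent of the chosen path (where embeddedness and simple connectivity of the hull cube are essential) and, above all, verifying the two-way equivalence between the triple relations and the square relations of the edge-path presentation. The clean telescoping in $\Psi\Phi=\mathrm{id}$ relies crucially on the hull being a single cube containing all intermediate vertices, so that every consecutive triple satisfies the hypothesis of a triple relation; isolating and exploiting this point correctly is the technical crux. By comparison, the reduction from the groupoid to the group is routine once the star $\{y_{A_0,A}\}$ is recognized as a spanning tree.
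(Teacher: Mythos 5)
Your proof is correct, and it is organized differently from the paper's. The paper never passes through the fundamental groupoid in proving this theorem: it first introduces an auxiliary \emph{group} $F$ generated by symbols for the $1$-cells only (adjacent pairs) modulo the square relations, shows the edge-path map $\phi:\pi_1(X_\E,A_0)\to F$ is a split injection, then develops the notions of a \emph{central} point and of \emph{ascendents} to identify $\pi_1(X_\E,A_0)$ with $F/N$, where $N$ is normally generated by the ascendent edges, and finally builds explicit mutually inverse homomorphisms between $F/N$ and the group $\Pi$ presented by the $y_{A,B}$. (The groupoid presentation appears in the paper only afterwards, as Corollary 5.8, deduced \emph{from} the theorem.) You invert this logic: you establish the groupoid presentation first, by matching the triple relations against the square relators of the edge-path groupoid, and then obtain the vertex group by collapsing the star $\{y_{A_0,A}\}_A$ as a spanning tree. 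The essential geometric inputs are identical in both arguments --- hulls are faces of every cube containing the given pair, cubes are embedded and simply connected so the path class $\eta_{A,B}$ is well defined, the telescoping of consecutive triples inside a single cube, and the observation that the hull of an adjacent pair is the $1$-cell itself --- so the computational core is shared. What your route buys is the elimination of the central-point/ascendent machinery and of the somewhat ad hoc group $F$, replacing them with the standard spanning-tree reduction for presented groupoids; what the paper's route buys is self-containedness (no appeal to groupoid presentation theory) and the reusable Lemmas 5.4 and 5.6, which it states in greater generality than needed here. One point worth making explicit in your write-up: the identities $y_{A,A}=1$ and $y_{B,A}=y_{A,B}^{-1}$ in your groupoid $\mathcal{P}$ must be derived from the triple relations applied to degenerate triples of vertices of the hull $Q_{A,B}$ (exactly as the paper does at the start of its proof), since you use them when checking that the square relators die under $\Psi$.
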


Theorem~\ref{phi-+} is proved in Section  \ref{pro-1} using Sections
\ref{grF} and  \ref{pro-2}  which are concerned with   more general
situations. 
One can check that  $A,B,C\in
\E$ are vertices of a certain cube in $\E$ if and only if there is $K\in \E$ and
a cubic set of   glides $S\subset G$ such that $[T]K\in \E$ for all
$T\subset S$ and $S$ has a partition $S=X \amalg Y \amalg Z$ such
that $A=[X]K$, $B=[Y] K$, and $C=[Z]K$.


\subsection{The group $F$}\label{grF} Let $F$ be the
group generated by symbols labeling   1-cells of $X_\E$ subject to
the relations associated with   2-cells of $X_\E$. More precisely,
$F$ is generated by  the set
$$\{x_{A,B}\,\vert \, A,B\in \E \,\, {\rm {such \,\, that}} \,\,  AB^{-1}\,\,  {\rm {is\,\,  a \,\,  glide}} \}.$$
subject to the   relations $x_{A,B} \, x_{B,A}=1$ for any $A,B$ and
\begin{equation}\label{pathpp}x_{A,B} \,  x_{B,C}\,  x_{C,D} \, x_{D,A} =1\end{equation}  for any
$A,B,C,D\in \E$   such that $B= sA,C=s tA,
D=tA $ for some independent glides $s,t\in G$. For a  path $\alpha$ in $X_\E$ formed by 1-cells and consecutively
connecting   0-cells $A_0$, $A_1$,..., $A_n  \in \E$, set
\begin{equation}\label{path} \phi(\alpha)= \prod_{k=1}^{n} x_{A_{k-1}, A_{k}} \in F.\end{equation}
It is clear that $\phi(\alpha)$ is preserved under homotopies of
$\alpha$ relative to the endpoints.


 \begin{lemma}\label{phi--}  Applying $\phi$ to loops based at $A_0\in \E$, we
obtain  a      homomorphism $ \phi:\pi_1(X_\E, A_0)\to F$.  This  homomorphism   is a split injection.
   \end{lemma}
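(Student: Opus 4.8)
The plan is to prove injectivity by exhibiting an explicit left inverse $r\colon F\to\pi_1(X_\E,A_0)$ with $r\circ\phi=\id$; such a retraction simultaneously shows that $\phi$ is injective and that it splits. That $\phi$ is a well-defined homomorphism is immediate from the preceding discussion: \eqref{path} turns concatenation of edge-paths into multiplication in $F$, the element $\phi(\alpha)$ depends only on the rel-endpoint homotopy class of $\alpha$, and every loop in the CW-complex $X_\E$ is homotopic to an edge-loop.

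To define $r$, I would first fix notation: for $A,B\in\E$ with $AB^{-1}$ a glide, let $e_{A,B}$ denote the $1$-cell of $X_\E$ oriented from $A$ to $B$; this cell is unique because $s=BA^{-1}$ is the only glide carrying $A$ to $B$, and reversing orientation gives $e_{B,A}=e_{A,B}^{-1}$. Let $X_0$ be the connected component of $X_\E$ containing $A_0$, and for each $0$-cell $A$ of $X_0$ choose an edge-path $w_A$ from $A_0$ to $A$, with $w_{A_0}$ constant. I then set $r(x_{A,B})=[\,w_A\,e_{A,B}\,w_B^{-1}\,]$ when $A$ lies in $X_0$ and $r(x_{A,B})=1$ otherwise; this is unambiguous since $A$ and $B$, being joined by $e_{A,B}$, lie in the same component.

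The key step is to verify that $r$ respects the two families of defining relations of $F$. For the backtrack relation I would use $e_{B,A}=e_{A,B}^{-1}$ to get $r(x_{A,B})\,r(x_{B,A})=[w_A\,e_{A,B}\,e_{B,A}\,w_A^{-1}]=1$. For the square relation \eqref{pathpp}, the intermediate factors $w_B^{-1}w_B$, $w_C^{-1}w_C$, $w_D^{-1}w_D$ cancel and leave $[w_A(e_{A,B}e_{B,C}e_{C,D}e_{D,A})w_A^{-1}]$, where the inner loop bounds the $2$-cell carried by the based square $(A,\{s,t\})$ and is therefore null-homotopic in $X_\E$; vertices outside $X_0$ make both relations reduce to $1=1$. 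Finally, applying $r$ to $\phi(\alpha)$ for a loop $\alpha$ through $A_0=A_0,A_1,\dots,A_n=A_0$, the adjacent factors $w_{A_k}^{-1}w_{A_k}$ telescope and, since $w_{A_0}$ is constant, the product collapses to $[e_{A_0,A_1}\cdots e_{A_{n-1},A_n}]=[\alpha]$, giving $r\circ\phi=\id$.

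I do not expect a genuine obstacle here: the argument is the classical spanning-tree/edge-path computation of $\pi_1$, packaged so that $F$ records all edges and all squares of $X_\E$ at once. The only points needing care are the relation-checking bookkeeping above and the convention $r(x_{A,B})=1$ for cells outside the base component, which is precisely what makes $r$ defined on all of $F$ rather than merely on the part coming from $X_0$.
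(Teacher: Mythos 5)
Your proposal is correct and follows essentially the same route as the paper: the paper's proof defines exactly this retraction (called $\psi$ there, built from chosen paths $\rho_A$ from $A_0$ to each $0$-cell of the base component, with $\psi(x_{A,B})=[\rho_A\,\overline{AB}\,\rho_B^{-1}]$ and $\psi(x_{A,B})=1$ off that component) and concludes $\psi\phi=\id$. Your version merely spells out the relation-checking that the paper leaves as a one-line assertion.
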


\begin{proof}   The first claim is obvious.   To prove the second claim, denote by $X$  the component of $X_\E$ containing  $A_0$.  For each  $0$-cell
 $A\in X$   pick a path $\rho_A$   from $A_0 $ to $A $ in $X$. For $\rho_{A_0}$ we take the constant path at $A_0$.
 Given $0$-cells $A,B\in X$ such that $AB^{-1}$ is a glide, denote by $\overline {AB}$ the path from $A$ to $B$
  running along the  unique 1-cell of $X $
connecting   $A$ and $B$.   Set $$\psi(x_{A,B})=[\rho_A \overline
{AB} \rho_B^{-1}]\in \pi_1(X, A_0)=  \pi_1(X_\E, A_0)$$ where the
square brackets stand for the homotopy class of a loop. For
$0$-cells $A,B$ not lying in $X$, set $\psi(x_{A,B})=1$.  The formula
$x_{A,B}\mapsto \psi(x_{A,B})$ is compatible with the defining
relations of   $F$  and yields  a homomorphism $ \psi:F\to
\pi_1(X_\E, A_0)$.  It follows from the definitions that $ \psi\phi =\id$.
\end{proof}

Though we shall not need it, note that for oriented   $\E$, there is
a homomorphism $g: F\to \A(G)$ carrying $ x_{A,B}$ and $x_{B,A}$
respectively to $g_{AB^{-1}}$ and $ (g_{AB^{-1}})^{-1}$ when there is a 1-cell of $X_\E$ oriented from $A$ to $B$. Then  $ g\phi=
\mu_{A_0}:\pi_1(X_\E, A_0)\to \A(G)$.

\subsection{Central  points}\label{pro-2}
  We call   $A_0 \in \E
 $ {\it central}  if  for any
  $A\in \E$,  the set  $\{A_0,A \}$ has a hull  in $\E$. This means that    there is a  cubic set of  glides $S\subset G$    such that  $A\in  \{[T]A_0\}_{T\subset S}\subset \E
 $  and that any   cubic set of glides with these properties contains~$S$. The vertices $A$,  $A_0$  of the based cube $(A_0,S)$ form then a big diagonal and so      $A=[S]A_0$.
    The  vertices   $\{s^{-1}A\}_{s\in S}$ of this cube are called the {\it ascendents} of $A$. The set of   ascendents of $A $ is empty if and only if $A=A_0$.




  \begin{lemma}\label{phi-+c}    If $A_0\in \E $  is central,  then $X_\E$ is connected
  and   the group $\pi_1(X_\E, A_0)$ is isomorphic to   $F/N$ where $F$ is the group defined in Section \ref{grF} and $N $
  is the normal subgroup of $F$
  generated  by the set
  $\{x_{A,B}\}$ where $A,B$ run over all  elements  of $ \E$ such   that $ A$ is an ascendent of $ B $. \end{lemma}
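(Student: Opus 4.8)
The plan is to promote the split injection $\phi\colon\pi_1(X_\E,A_0)\to F$ of Lemma~\ref{phi--} to an isomorphism onto $F/N$ by computing the kernel of its retraction. Recall that Lemma~\ref{phi--} produces, after choosing for each $0$-cell $A$ a path $\rho_A$ from $A_0$ to $A$ (with $\rho_{A_0}$ constant), a homomorphism $\psi\colon F\to\pi_1(X_\E,A_0)$ with $\psi\phi=\id$. Since $\psi$ is then onto, $\pi_1(X_\E,A_0)\cong F/\Ker\psi$, so everything reduces to proving $\Ker\psi=N$ for a well-chosen family $\{\rho_A\}$. Connectivity of $X_\E$ I would get immediately from centrality: for $A\in\E$ the hull cube $(A_0,S_A)$ has all its vertices in $\E$ and satisfies $A=[S_A]A_0$, so a monotone edge-path inside it (adjoining one glide of $S_A$ at a time) joins $A_0$ to $A$; as every point of $X_\E$ lies in some cube, $X_\E$ is connected.

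The key device is to choose the paths $\rho_A$ recursively along ascendents. I would induct on $k=\card(S_A)$, the dimension of the hull of $\{A_0,A\}$: for $k=0$ take $\rho_{A_0}$ constant, and for $k\ge 1$ fix $s\in S_A$, set $A'=s^{-1}A=[S_A\setminus\{s\}]A_0$ (an ascendent of $A$), and put $\rho_A=\rho_{A'}\cdot e$ where $e$ is the $1$-cell from $A'$ to $A$. The point making the induction legitimate is that $A'$ is a vertex of the face $(A_0,S_A\setminus\{s\})$, all of whose vertices lie in $\E$; hence the hull of $\{A_0,A'\}$ is a face of $(A_0,S_A\setminus\{s\})$, so $\card(S_{A'})\le k-1$ and the hull cube of $A'$ embeds as a subcomplex of $(A_0,S_A)$. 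In particular every $\rho_A$ built this way lies inside its own hull cube $(A_0,S_A)$.

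With this choice I would establish the two inclusions. For $N\subseteq\Ker\psi$: given any ascendent $A'=s^{-1}A$, the path $\rho_{A'}$ lies in the face $(A_0,S_A\setminus\{s\})$ and $\rho_A$ lies in $(A_0,S_A)$, so the loop $\rho_{A'}\,e\,\rho_A^{-1}$ representing $\psi(x_{A',A})$ sits entirely in the contractible cube $(A_0,S_A)$ and is null-homotopic; thus $\psi(x_{A',A})=1$, and since such elements generate $N$ normally while $\Ker\psi$ is normal, $N\subseteq\Ker\psi$. For the reverse inclusion I would pass to the induced maps $\bar\phi\colon\pi_1(X_\E,A_0)\to F/N$ and $q\colon F/N\to\pi_1(X_\E,A_0)$ (well defined since $N\subseteq\Ker\psi$), which satisfy $q\bar\phi=\id$; it then suffices to show $\bar\phi$ is onto, for then $\bar\phi$ is an isomorphism and $\Ker\psi/N=\Ker q=1$. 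The crux here is that $\phi(\rho_A)\in N$ for all $A$, which follows by the same induction since $\phi(\rho_A)=\phi(\rho_{A'})\,x_{A',A}$ with $\phi(\rho_{A'})\in N$ (inductive hypothesis, valid as $\card(S_{A'})<\card(S_A)$) and $x_{A',A}\in N$. Granting this, each generator $x_{C,D}$ of $F$ satisfies $\phi(\rho_C\,e\,\rho_D^{-1})=\phi(\rho_C)\,x_{C,D}\,\phi(\rho_D)^{-1}\equiv x_{C,D}\pmod N$, where $\rho_C\,e\,\rho_D^{-1}$ is a loop at $A_0$; hence every generator of $F/N$ lies in $\bar\phi(\pi_1(X_\E,A_0))$ and $\bar\phi$ is onto.

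The step I expect to be the main obstacle is the bookkeeping with hulls rather than any single hard estimate: one must verify carefully that passing to an ascendent strictly lowers the hull dimension and that the hull cube of the ascendent realizes as a face of the original hull cube. This is exactly what simultaneously licenses the recursive definition of the $\rho_A$, the null-homotopy argument for $N\subseteq\Ker\psi$, and the induction giving $\phi(\rho_A)\in N$; it is also the only place where the full strength of centrality (that \emph{every} pair $\{A_0,A\}$ admits a hull, together with the minimality that defines ascendents) is genuinely used.
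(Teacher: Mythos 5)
Your proposal is correct and follows essentially the same route as the paper: both build the paths $\rho_A$ inside the hull cubes of $\{A_0,A\}$ (your recursive construction along ascendents produces the same paths as the paper's ordering of $S$), prove the key facts $\phi(\rho_A)\in N$ and $N\subseteq\Ker\psi$, and conclude by checking on the generators $x_{C,D}$ that the induced maps between $\pi_1(X_\E,A_0)$ and $F/N$ are mutually inverse. The only cosmetic differences are that you phrase $N\subseteq\Ker\psi$ via a null-homotopy in the (embedded, contractible) hull cube where the paper uses order-independence of $\rho_A$, and you phrase the last step as surjectivity of $\bar\phi$ rather than as the identity $\tilde\phi\tilde\psi=\id$.
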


\begin{proof}    Pick  $A\in \E$ and represent the hull of $\{A_0, A\} $  by a based cube $(A_0,S)$ where $S=\{s_1,...,s_n\}$ is a cubic set of glides with $n \geq 0$.   As we know,  $A=[S]A_0=s_1 \cdots s_n A_0$.
For $k=1,..., n$, set $A_k=s_{1}\cdots s_k A_0  $.  The
  cube in $\E$ represented by the based cube $(A_0,
\{s_1,..., s_k\}   ) $ contains $ A_0, A_k $ among its vertices and has no
proper faces with the same property. Therefore this cube is the hull
of $\{A_0, A_k\}$ in~$\E$. Hence, $A_{k-1} $ is an ascendent of $
A_{k } $ for all $k$.

    The edges of $X_\E$ connecting consecutively  $A_0, A_1,..., A_n=A$
    form   a path $\rho_A$   from $A_0$ to $ A $.    The homotopy class of $\rho_A$
     in $X_\E$, obviously,     does not depend on the order in    $S $.
     Recall the    homomorphism $ \phi:\pi_1(X_\E, A_0)\to F$ from Lemma~\ref{phi--}.  By definition,
        $\phi
(\rho_A )=  x_{A_0, A_{1}} \, x_{A_1, A_{2}} \cdots x_{A_{n-1},
A_{n}} $. All generators on the right-hand side belong  to $N$
because   $A_{k-1} $ is an ascendent of $ A_{k } $ for all $k$.
Hence, $\phi (\rho_A )\in N$ for all $A\in \E$. Note that $\rho_{A_0}$ is the constant
path at $A_0$.

The existence of the  paths $\{\rho_A\}_A$ implies that $X_\E$ is
connected. As in   Lemma \ref{phi--},  these paths determine a
homomorphism $\psi:F\to \pi_1(X_\E, A_0)$ by
 $\psi(x_{A,B})=[\rho_A  \overline  {AB} \rho_B^{-1}]$.
  Observe that $N\subset \Ker \, \psi $. Indeed, for any $A,B\in \E$ such   that $ A$ is an ascendent of $ B $, we have
     $\rho_B=\rho_A   \overline  {AB} $ and
 $$\psi(x_{A,B})=[\rho_A  \overline  {AB} \rho_B^{-1}]= [\rho_A  \overline  {AB} \, \overline  {AB}^{-1}\rho_A^{-1}]=1.$$

We claim that the homomorphisms $\tilde \phi: \pi_1(X_\E, A_0) \to
F/N$ and $\tilde \psi: F/N\to \pi_1(X_\E, A_0)$ induced by $\phi$
and $\psi$ respectively, are mutually inverse  isomorphisms.
Clearly, $\tilde \psi \tilde \phi =\psi\phi =\id$. To check  that
$\tilde \phi \tilde \psi=\id$,  pick any $A,B\in \E$ such that
$AB^{-1}$ is a glide.   Denote the projection of $x_{A,B}\in F$ to
$F/N$ by $\tilde x_{A,B}$. We have  $$\tilde \phi \tilde \psi
(\tilde x_{A,B} )= \tilde \phi \psi (x_{A,B})=\tilde \phi ([\rho_A
\overline {AB} \rho_B^{-1}]) = \tilde \phi  (\rho_A ) \, \tilde
x_{A,B} \, \tilde \phi (\rho_B )^{-1}=  \tilde x_{A,B} $$ where at the last step  we
use that $\phi(\rho_A), \phi(\rho_B) \in N$.  Therefore
$\tilde \phi \tilde \psi=\id$.
\end{proof}

\subsection{Proof of Theorem~\ref{phi-+}}\label{pro-1} Let $\Pi$ be the group defined by the generators
and relations in this theorem. The relation $y_{A_0,A}\,
y_{A,A}=y_{A_0,A}$ implies that $y_{A,A}=1$ for all $A\in \E$. Then
  $y_{A,B}\, y_{B,A}=y_{A,A}=1$   for all $A, B\in \E$ so that $y_{B,A}=y_{A,B}^{-1}$. In particular,
$y_{A,A_0}=y_{A_0,A}^{-1}=1$ for all $A\in \E$.

The assumptions of Theorem~\ref{phi-+} imply
that
  $A_0\in \E$ is central. By    Lemma~\ref{phi-+c}, to prove the first part of the theorem,  it
suffices to construct an isomorphism $ \beta: F/N\to \Pi$. We define
$\beta$ on the generators by $\beta(x_{A,B})=y_{A,B}$.   The
compatibility with the relation $x_{A,B} \, x_{B,A}=1$ in $F$ is
clear. The compatibility with the relation \eqref{pathpp} follows
from the equalities
$$y_{A,B} \,  y_{B, C}\,  y_{C,D} \, y_{D,A} =y_{A, C}\,  y_{C,D} \, y_{D,A}=y_{A,  D} \, y_{D,A}=1.$$
Finally, if $A\in \E$ is an ascendent of $B\in \E$, then $A,B,A_0 $
lie in a cube in $\E$ and so $\beta(x_{A,B})=y_{A,B}=y_{A,A_0}\,
y_{A_0,B}=1$.

Next, we construct a homomorphism $ \gamma: \Pi \to  F/N $. For
$A,B\in \E$, let $Q=Q(A,B) $ be the   hull of $\{A, B\}$  in $\E$.
We can connect $A$ to $B$ by a path formed by     edges of $Q$ and
visiting consecutively   vertices $A=A_1, A_2,..., A_n=B $ of $ Q$.
Any two such paths are related by homotopies pushing the path across
square faces of $Q$. The relations in $F$ ensure that
$\gamma(y_{A,B})=\prod_k x_{A_{k-1}, A_k}$ is a well defined element
of $F/N$. If $A,B,C\in \E$ are vertices of a cube in $  \E$, then the
hulls   $Q(A,B) , Q(A,C), Q(B,C)$ are faces of this cube, and an argument
similar to the one above shows that $\gamma(y_{A,B}\,
y_{B,C})=\gamma(y_{A, C})$. Finally, a path connecting   $A_0$ to
$A$ may be chosen so that each vertex is an  ascendent of the next
vertex. By   definition of $N$, this gives $\gamma(y_{A_0,A})=1$.

For each generator $x_{A,B}$ of $F/N$, there is a 1-cell $P$ of $X_\E$
connecting $A$ to~$B$. Clearly, $P$ is a 1-cube in $\E$ with   vertices   $ A,B $.
No proper face  of $P$ contains  both $A$ and $B$. Hence,
$Q(A,B)=P$ and  $\gamma \beta
(x_{A,B})=\gamma(y_{A,B})=x_{A,B}$. Therefore $\gamma
\beta=\id$.

For any $A,B\in \E$, we have (in the notation above)
$$\beta \gamma(y_{A,B})=\prod_k \beta (x_{A_{k-1},
A_k})=  \prod_k y_{A_{k-1}, A_k}=y_{A,B}$$ where the last equality
follows from the defining relations in $\Pi$. Thus, $\beta
\gamma=\id$.

The second claim of the theorem follows from the definitions of  $\gamma$ and $\tilde \psi$ above.

\subsection{The fundamental groupoid} We now compute the fundamental groupoid $\pi=\pi_1(X_\E, \E)$  of the pair $(X_\E, \E)$. Recall that a groupoid is a (small) category in which all morphisms are isomorphisms. The   groupoid $\pi $    is the category whose objects are elements of $\E$ and whose morphisms are homotopy classes of paths in $X_\E$ with endpoints in $\E$. In the next assertion, we write down composition of morphisms in $\pi$ in the order opposite to the usual one; this makes composition in~$\pi$ compatible with multiplication of paths.



   \begin{corol}\label{corphi-+}   Under the conditions of Theorem~\ref{phi-+},
   the   groupoid $\pi=\pi_1(X_\E, \E)$ has generators $\{z_{A,B}:A\to B\}_{A,B\in \E}$ subject to the defining  relations
  $z_{A,C}=z_{A,B}\,  z_{B,C}$ for each triple   $A,B,C $ of vertices of a  cube in $\E$.  The image of each $
z_{A,B}$ in $\pi_1(X_\E, \E)$
  is represented by a path from $A$ to $B$ in the hull of $\{A,B\}$. \end{corol}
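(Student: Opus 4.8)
The plan is to construct the evident comparison functor from the abstractly presented groupoid into $\pi$ and to show it is an isomorphism by reducing, through a choice of spanning tree, to the group statement of Theorem~\ref{phi-+}.

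First I would let $\mathcal{Z}$ be the groupoid with object set $\E$ presented by the generators $z_{A,B}\colon A\to B$ and the relations $z_{A,C}=z_{A,B}\,z_{B,C}$ for triples $A,B,C$ of vertices of a cube in $\E$. Exactly as at the start of the proof of Theorem~\ref{phi-+} one records $z_{A,A}=\id_A$ and $z_{B,A}=z_{A,B}^{-1}$. I would then define a functor $\Phi\colon\mathcal{Z}\to\pi$ that is the identity on objects and sends $z_{A,B}$ to the class of a path from $A$ to $B$ inside the hull $Q(A,B)$; since the hull is a cube, hence simply connected, this class is unambiguous. That $\Phi$ respects the relations is the computation already made for $\gamma$ in the proof of Theorem~\ref{phi-+}: for $A,B,C$ among the vertices of a cube $Q$ the hulls $Q(A,B),Q(B,C),Q(A,C)$ are faces of $Q$, so the three hull-paths compose up to homotopy inside $Q$. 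The concluding sentence of the corollary about path representatives is then immediate from the definition of $\Phi$.

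Next I would invoke the elementary fact that a functor between connected groupoids which is bijective on objects and an isomorphism on a single vertex group is an isomorphism of groupoids; this follows from the bitorsor structure of the hom-sets, transporting bijectivity on $\mathcal{Z}(A,B)$ back to the vertex group at $A_0$ by fixed connecting morphisms. Both groupoids are connected ($X_\E$ by Theorem~\ref{phi-+}, and $\mathcal{Z}$ because the $z_{A_0,A}$ join every object to $A_0$), so it suffices to prove that $\Phi$ restricts to an isomorphism $\mathcal{Z}(A_0,A_0)\to\pi(A_0,A_0)=\pi_1(X_\E,A_0)$.

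Finally I would identify $\mathcal{Z}(A_0,A_0)$ with the group $\Pi$ of Theorem~\ref{phi-+} by collapsing the star-tree $\{z_{A_0,A}\}_{A\in\E}$. Explicitly, set $\alpha(y_{A,B})=z_{A_0,A}\,z_{A,B}\,z_{B,A_0}$; this respects the cube relations of $\Pi$ (the interior factors $z_{B,A_0}z_{A_0,B}$ cancel) and sends the basepoint relations $y_{A_0,A}=1$ to identities, so $\alpha\colon\Pi\to\mathcal{Z}(A_0,A_0)$ is well defined. For an inverse I would use the auxiliary functor $\Psi\colon\mathcal{Z}\to\Pi\times\mathcal{P}$ into the product of $\Pi$ (viewed as a one-object groupoid) with the pair groupoid $\mathcal{P}$ on $\E$, given by $z_{A,B}\mapsto(y_{A,B},(A,B))$ and well defined because $y_{A,C}=y_{A,B}\,y_{B,C}$ in $\Pi$; restricting to $\mathcal{Z}(A_0,A_0)$ yields a map into $\Pi\times\{(A_0,A_0)\}\cong\Pi$ with $\Psi\alpha=\id$ (using $y_{A_0,A}=1$) and $\alpha\Psi=\id$ (by telescoping $z_{A_i,A_0}z_{A_0,A_i}=\id$). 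As $\Phi\circ\alpha$ sends $y_{A,B}$ to the loop along hull-paths $A_0\to A\to B\to A_0$, it is exactly the canonical isomorphism $\Pi\to\pi_1(X_\E,A_0)$ of Theorem~\ref{phi-+}; hence $\Phi$ is an isomorphism on the vertex group at $A_0$, and the corollary follows. The main obstacle is precisely this tree-collapse bookkeeping, confirming that killing $\{z_{A_0,A}\}$ adds only the relations $z_{A_0,A}=1$ and rewrites the cube relations into those of $\Pi$, all checked against the opposite composition convention used for $\pi$.
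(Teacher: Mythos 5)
Your proposal is correct and follows essentially the same route as the paper: both construct the comparison functor sending $z_{A,B}$ to the hull-path class, reduce to the vertex group at $A_0$ via connectivity, and use the assignment $y_{A,B}\mapsto z_{A_0,A}\,z_{A,B}\,z_{B,A_0}$ together with Theorem~\ref{phi-+} to conclude. Your explicit two-sided inverse via the functor to $\Pi\times\mathcal{P}$ merely spells out what the paper dispatches with ``it is easy to check that $f_{A_0}g=\id$ and $g$ is onto.''
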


\begin{proof} Let $Z$ be the groupoid whose set of objects is $\E$ and whose morphisms are defined by the generators and relations in  this corollary (and are invertible).
It follows from the   relations that $z_{A,A}=1$ and $z_{A,B}^{-1}=z_{B,A}$ for all $A,B\in \E$.
Consider the functor $f:Z\to \pi $ which is the identity on the objects and which carries each   $z_{A,B}$ to the homotopy class of   paths from $A$ to $B$ in the hull of $\{A,B\}$. Since each path between 0-cells of $X_\E$ is homotopic to a path in the 1-skeleton of $X_\E$, the functor $f$ is surjective on the morphisms. To prove that $f$ is injective it is enough to prove that for any $A_0\in \E$, the   map $$f_{A_0}:\End_Z(A_0 )\to \End_\pi (A_0 )=\pi_1(X_\E,A_0)$$ induced by $f$ is injective. Theorem \ref{phi-+} implies that the formula $$g(y_{A,B})=z_{A_0,A} z_{A, B}z_{B,A_0}\in \End_Z(A_0)$$ defines a homomorphism  $g: \pi_1(X_\E,A_0) \to \End_Z(A_0)$.  It is easy to check that $f_{A_0} g=\id$ and  $g$ is onto.    Hence,  $g$ and $f_{A_0}$ are bijections.
\end{proof}

\section{Set-like glidings}\label{lloccgccgcgl}

We   study a class of   gliding systems generalizing
Example~\ref{exam}.5.

\subsection{Set-like gliding systems}\label{llossetgcgl} Consider a set $E$ and
its power group    $G=2^E \cong (\ZZ/2\ZZ)^E$. A gliding
system in $G$ is {\it set-like} if any   independent glides
in this system are disjoint as subsets of $E$. Thus, a
set-like gliding system in $G$ consists of  a  set $\G\subset
G\setminus \{1\}$ and a symmetric  set $\I\subset \G\times \G$ such
that $(s,t)\in \I \Longrightarrow s\cap t=\emptyset$.
For instance, the gliding system in Example~\ref{exam}.5 is
set-like.

\begin{lemma}\label{glidi-} A  set-like gliding system
is regular  in the sense of Section~\ref{regglsy}.
\end{lemma}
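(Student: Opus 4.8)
The plan is to show that any finite set of pairwise independent glides $S$ is cubic, which by definition of regularity (Section~\ref{regglsy}) requires checking that $[T_1]\neq[T_2]$ for distinct subsets $T_1,T_2\subset S$. The key observation is that in a set-like gliding system, independence forces the supports to be disjoint as subsets of $E$, so that the element $[S]=\prod_{s\in S}s\in 2^E$ is computed by the symmetric-difference multiplication on \emph{disjoint} sets, which is just the ordinary union.

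First I would fix a finite set $S=\{s_1,\dots,s_k\}$ of pairwise independent glides. By the definition of a set-like gliding system, $(s_i,s_j)\in\I$ for $i\neq j$ implies $s_i\cap s_j=\emptyset$, so the members of $S$ are pairwise disjoint subsets of $E$. For any subset $T\subset S$, the product $[T]=\prod_{s\in T}s$ computed with the power-group multiplication $A\,B=(A\cup B)\setminus(A\cap B)$ collapses, because all pairwise intersections vanish, to the ordinary union $[T]=\bigcup_{s\in T}s$. This is the crucial simplification: disjointness turns the symmetric-difference product into a disjoint union.

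Next I would argue that distinct subsets yield distinct unions. Suppose $T_1,T_2\subset S$ with $[T_1]=[T_2]$, i.e.\ $\bigcup_{s\in T_1}s=\bigcup_{s\in T_2}s$. Because the glides in $S$ are pairwise disjoint and each is nonempty (glides are never the unit $1=\emptyset$ by axiom~(1) of Section~\ref{BU}), each $s\in S$ is a nonempty ``block'' that is either entirely contained in a given union or disjoint from it; hence $s\in T_1$ if and only if $s\cap[T_1]\neq\emptyset$ if and only if $s\cap[T_2]\neq\emptyset$ if and only if $s\in T_2$. Therefore $T_1=T_2$, contradicting distinctness. This shows the $2^k$ subset-products are distinct, so $S$ is cubic.

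The only point requiring care---and the step I expect to be the mild obstacle---is the reliance on each glide being \emph{nonempty}, which guarantees that a glide cannot be the empty block and hence that membership in a subset is detected by nonempty intersection with the union. This is exactly where axiom~(1) ($1\notin\G$, i.e.\ $\emptyset$ is not a glide) is used, and I would flag it explicitly. Having shown that every finite set of pairwise independent glides is cubic, the gliding system is regular by definition, completing the proof.
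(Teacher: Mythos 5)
Your proof is correct and follows essentially the same route as the paper's: independence in a set-like system forces the glides to be pairwise disjoint subsets of $E$, so $[T]$ is just the union $\bigcup_{s\in T}s$, and nonemptiness of glides then separates distinct subsets (the paper simply picks $t\in T_1\setminus T_2$ and observes $t\subset[T_1]$ while $t\cap[T_2]=\emptyset$). One tiny slip: the fact that $\emptyset$ is not a glide comes from the requirement $\G\subset G\setminus\{1\}$ in the definition of a gliding system, not from axiom~(1), which is closure under inversion.
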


\begin{proof} We must
show that any finite set of independent glides $S\subset G$ is
cubic. Let $T_1,T_2$ be distinct subsets of $S$. Assume for
concreteness that there is $t\in T_1 \setminus T_2$.  Since all
glides in $S$ are disjoint as subsets of $E$,  we have
  $t\subset [T_1]$ and  $t\cap [T_2] =\emptyset$ where $[T_i]
=\cup_{s\in T_i} s $ for $i=1,2$. Thus,
$[T_1]\neq [T_2]$. Hence $S$ is cubic.
\end{proof}

   In the rest of this
section we discuss other   properties of set-like gliding
systems. Fix from now on a set $E$ and a set-like gliding
system in $G=2^E$.

\subsection{Orientation}\label{orisetthe} We claim that all subsets of $G$ are orientable in the sense of
Section~\ref{prelimArtin++}. It suffices to orient the set $ G$
itself
 because an orientation on $G$ induces  an orientation on any subset of~$G$. Pick an element $e_s\in s$ in every glide $s\subset E$.
 A 1-cell of $X_G$
relates two 0-cells $A,B\subset E$ such that $
AB=(A\setminus B)\cup (B\setminus A)$ is a glide in $G$.  Then
  $e_{AB} \in AB$ belongs to precisely one of the sets $A,B$. We orient the 1-cell in question towards
  the 0-cell containing $e_{AB}$. It is easy to check that this defines an orientation on
  $G$. (Here we need to use the assumption that independent glides   are
  disjoint.)

  An orientation on $G$ determines a   homomorphism $\mu_A:\pi_1(X_G, A)\to \A (G)$ for all $A\in  G$, see Section~\ref{prelimArtin++}.
  If   $E $ is   finite, then  the right-angled Artin  group $\A(G) $ is    finitely generated,      $\mu_A$ is   injective (Corollary \ref{simpleNEW++-eee}), and therefore  the group
   $\pi_1(X_G, A)$ is  biorderable, residually nilpotent, and
 embeds in $SL_n(\ZZ)$ for some~$n$.

\subsection{The evaluation map}\label{orisetthe2} Let $I^E$ be the set of   maps $E\to I$ viewed as the product of
copies of $I$ numerated by elements of $E$ and provided with the
product topology.    Assigning to each   set $A \subset E$   its
characteristic function $\delta_A: E\to \{0,1\}\subset I$ we obtain
a map from the 0-skeleton of $X_G$ to $  I^E$. We extend this map
  to $ X_G$ as follows. For a point $a\in X_G$
represented by a triple $(A,S, x:S\to I)$ as in
Section~\ref{cubecomplexX}, let $\omega(a):E\to I$ be the map
defined by
\begin{equation}\label{func}
\omega (a) (e) = \left\{
    \begin{array}{ll}
        x(s) & \mbox{for} \,\,   e\in s \setminus A\,\,  \mbox{with}  \,\,   s\in S\\
        1-x(s) & \mbox{for} \,\,   e\in s \cap A \,\,  \mbox{with}  \,\,   s\in S\\
         \delta_A(e) & \mbox{for all other} \,\, e \in E \, .
    \end{array}
\right.
\end{equation}
This definition makes  sense because distinct $s\in S$ are
disjoint as subsets of $E$.  The definition of $\omega (a)$ may be
rewritten in terms of characteristic functions:
$$\omega  (a)  = \delta_A   + (1-2\delta_A   )  \sum_{s\in S} x(s) \delta_{s} .$$  It is easy to check that the formula
$a\mapsto \omega_a$ yields a well-defined continuous map
$\omega:X_G\to I^E$ whose restriction   to
any cube in $X_G$ is an embedding. We call $\omega$ the {\it evaluation map}.


\begin{lemma}\label{Wisecubi}  Let $\E\subset  G=2^E$   and the following two conditions are met:

(i) $[S_1]=[S_2] \Longrightarrow S_1=S_2$ for any cubic sets of
glides $S_1,S_2\subset G$;

(ii)   any glide $s\subset E$ has a partition into two non-empty subsets  such that if $A \in \E$ and $sA \in \E$,
 then $s\cap A$ is one of these subsets.

\noindent Then the restriction of $\omega :X_G\to I^E$  to $X_\E\subset X_G$ is an injection.
   \end{lemma}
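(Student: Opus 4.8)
The plan is to show that $\omega$ restricted to $X_\E$ is injective by analyzing what happens on each cube and then across distinct cubes. Recall that $\omega$ is already known to embed each individual cube of $X_G$ (stated just before the lemma), so the restriction to each cube of $X_\E$ is automatically injective. Thus the real content is to rule out two distinct points $a,b\in X_\E$ lying in (the interiors of) \emph{different} cubes yet having $\omega(a)=\omega(b)$. First I would represent $a$ by a triple $(A,S,x)$ with each $x(s)\in(0,1)$ (pushing any coordinate equal to $0$ or $1$ onto a face, we may assume $a$ lies in the open cube determined by its minimal carrier), and likewise represent $b$ by $(B,\tilde S,\tilde x)$ in its own open cube. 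The goal is to deduce $A=B$, $S=\tilde S$, and $x=\tilde x$ from $\omega(a)=\omega(b)$.

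\textbf{Recovering the combinatorial data from $\omega(a)$.} The key is to read off $A$, $S$, and $x$ from the function $\omega(a):E\to I$. For a point in the open cube, the formula \eqref{func} shows that on the support $\bigcup_{s\in S}s$ the values $\omega(a)(e)$ lie in the open interval $(0,1)$, while off this support $\omega(a)$ takes only the values $0$ or $1$ (namely $\delta_A(e)$). Hence the set $U=\omega(a)^{-1}((0,1))$ equals the union $\bigcup_{s\in S}s$, and this is an \emph{intrinsic} feature of the function $\omega(a)$, independent of the chosen representative. The first task is therefore to show $U$ determines the partition into the glides of $S$ and then to pin down $A$ and $x$. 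This is precisely where hypotheses (i) and (ii) enter: condition (ii) tells us that for each glide $s$ meeting a dimer-type constraint, the intersection $s\cap A$ is forced to be one of two prescribed halves of $s$, so that \emph{on} $U$ the function $\omega(a)$ distinguishes the two sides of each glide and lets us recover both the glide $s$ and which half lies in $A$. The values $x(s)$ are then read directly from $\omega(a)$ on $s\setminus A$.

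\textbf{Pinning down $A$ and matching the two representatives.} Having recovered from $\omega(a)=\omega(b)$ the common support $U$ and, via (ii), the individual glides, I expect to conclude that $S$ and $\tilde S$ decompose the same set $U$ into glides. The delicate point is that a priori $S$ and $\tilde S$ could be different cubic sets of glides with $[S]=[\tilde S]$ as elements of $2^E$; this is exactly what condition (i) forbids, forcing $S=\tilde S$ once we know they are cubic sets with the same associated element of $G$. With $S=\tilde S$ in hand, condition (ii) fixes $s\cap A=s\cap B$ for each $s\in S$, and off the support $\omega(a)=\omega(b)$ gives $\delta_A=\delta_B$, whence $A=B$; then equality of the $x$-values on each $s\setminus A$ gives $x=\tilde x$. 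Therefore $a=b$.

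\textbf{The main obstacle.} The routine part is the coordinatewise bookkeeping in \eqref{func}. The genuinely subtle step is establishing that $\omega(a)$ intrinsically recovers the decomposition of the support $U$ into the glides of $S$ — a general function $E\to I$ supported on $U$ carries no a priori partition of $U$. The resolution is to use condition (ii) to recover each glide as a union of the ``non-constant-level'' pattern that \eqref{func} imposes (the two halves of $s$ receive complementary values $x(s)$ and $1-x(s)$), and then invoke condition (i) to eliminate the ambiguity that two different cubic sets of glides might produce the same element of $2^E$. I would organize the argument so that (ii) does the work of separating and identifying the glides locally, while (i) guarantees global uniqueness of the cubic set, and the embedding of each single cube handles the case where $a,b$ lie in a common cube.
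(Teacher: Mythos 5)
Your overall skeleton matches the paper's proof: take representatives $(A_i,S_i,x_i)$ with $x_i$ valued in $(0,1)$, observe that $\omega(a_i)^{-1}((0,1))=\cup_{s\in S_i}s=[S_i]$, and invoke condition (i) to conclude $S_1=S_2$. But there is a genuine gap in the second half. You claim that, once $S=\tilde S$ is known, ``condition (ii) fixes $s\cap A=s\cap B$'' and hence $A=B$. This cannot be right: a single point of $X_\E$ lying in the open cube with glide set $S$ has $2^{\card(S)}$ distinct representing triples, since replacing $(A,S,x)$ by $(sA,S,x')$ with $x'(s)=1-x(s)$ (note $s^{-1}=s$ in $2^E$) gives the same point with a different base vertex. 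Condition (ii) only says that each of $s\cap A$ and $s\cap B$ is one of the two prescribed halves of $s$; it does not force them to be the \emph{same} half. From $\omega(a)=\omega(b)$ one can only deduce, for each $s\in S$, a dichotomy: either $s\cap A=s\cap B$ and $x(s)=\tilde x(s)$, or $s\cap A=s\setminus B$ and $x(s)=1-\tilde x(s)$. The paper's proof then normalizes: for each glide of the second type it replaces the representative of the second point by the equivalent triple based at $sB$, and only after these replacements (which commute because the glides are disjoint) does it conclude that the bases and the $x$-maps agree. Without this re-basing step your argument asserts a false intermediate statement ($A=B$ for the originally chosen representatives) and does not close.

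A second omission: you say the two halves of $s$ ``receive complementary values $x(s)$ and $1-x(s)$'' and can therefore be read off from $\omega(a)$, but when $x(s)=1/2$ these values coincide, $\omega(a)$ is constant on $s$, and the function does not distinguish the halves at all. This is exactly the case where condition (ii) is indispensable: it guarantees $s\cap A\neq\emptyset$ and $s\setminus A\neq\emptyset$ (so the constant value must be $1/2$) and then forces $s\cap A$ and $s\cap B$ each to equal one of the two sets of the prescribed partition, which again yields the dichotomy above. In the case where $\omega(a)$ takes two distinct values on $s$, condition (ii) is not actually needed, since the level sets of $\omega(a)$ on $s$ already give the partition. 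So your division of labor is also slightly off: (i) alone recovers the cubic set $S$ from its union $[S]$ (no local separation of glides via (ii) is required for that), while (ii) exists precisely to handle the constant-$1/2$ glides.
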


   \begin{proof} Suppose that
   $\omega (a_1)=\omega (a_2)$ for  some $a_1, a_2\in X_\E$. Represent each $a_i$ by
  a triple  $(A_i\in \E,S_i,x_i:S_i\to I)$.  Passing, if
   necessary, to  a face of the   cube, we can assume that
   $0<x_i(s)<1$ for all $s\in S_i$.  Set $f_i=\omega(a_i):E\to I$. Clearly,   $  f_i^{-1}( (0,1))=\cup_{s\in S_i} s=[S_i]
   $. The
   equality $f_1 =f_2 $ implies that
   $[S_1]=[S_2]$. By condition (i), $S_1$=$S_2$. Set $S=S_1=S_2$ and $f=f_1=f_2:E\to I$.

   We prove below that for
    any  glide $s \in S$, either $(\ast  )$ $s\cap A_1=s\cap A_2$ and $x_1(s)=x_2(s)$ or $(\ast \ast)$ $s\cap A_1=s\setminus A_2 $ and
    $x_1(s)=1-x_2(s)$. For each $s$ of the second type,    replace $(A_2, S, x_2)$ with    $(A'_2=sA_2, S, x'_2)$  where $x'_2:S\to
   I$ carries $s$ to
   $1-x(s )$ and  is equal to $x_2$ on $S\setminus \{s\}$. The triple $(A'_2, S, x'_2)$ represents the same point
   $a_2 \in \E$ and satisfies    $s\cap A_1=s\cap A'_2 $ and
   $x_1(s)=x'_2(s)$. Since different $s\in S$ are  disjoint as
   subsets of $E$, such replacements along various $s $ do not interfere with  each other
   and commute. Effecting them for all $s $ of the second type, we
 obtain a new triple $(A_2, S, x_2)$ representing
   $a_2 $ and satisfying  $s\cap A_1=s\cap A_2$,
   $x_1(s)=x_2(s)$ for all $s\in S$. Then $x_1=x_2:S\to I$. Also,
   $$A_1\cup \cup_{s\in S} s= f^{-1}((0,1])  =A_2\cup \cup_{s\in S} s.$$
   Since different $s\in S$ are disjoint as subsets of $E$ and satisfy $s\cap A_1=s\cap A_2$, we deduce that
    $A_1=A_2$. Thus, $a_1=a_2$.


It remains to prove that for
    any   $s \in S$, we have either $(\ast  )$   or $(\ast \ast)$.
    By \eqref{func},  the
   function $f :E\to I$   takes at most two  values on
   $s\subset E$.
   Suppose first that $f\vert_s$   takes two distinct values  (whose sum is   equal to $1$).
   By
   \eqref{func},  $f$ is constant on both   $s\cap A_i$   and  $s\setminus A_i$ for   $i=1,2$. If
 $f(s\cap A_1)=f(s\cap A_2)$,  then we have  $(\ast)$. If
 $f(s\cap A_1)= f(s\setminus A_2)$,  then we have  $(\ast\ast)$.   Suppose next
   that $f$ takes only one value on $s$. Condition (ii) implies that $s\cap A_1\neq \emptyset$ and $s\setminus A_1\neq \emptyset$.
   Since $f=\omega(a_1)$ takes only one value on $s$, formula \eqref{func} implies that this  value    is  $1/2$.
   Thus,
$x_1(s)=x_2(s)=1/2$. Applying condition (ii) again, we obtain  that either $s\cap
A_1=s\cap A_2$ or $s\cap A_1=s\setminus A_2 $. This gives,
respectively, $(\ast)$ or $(\ast \ast)$.
  \end{proof}


 \section{The dimer complex}\label{dimergroups}

\subsection{Cycles in graphs}\label{Cycles in graphs}   By a {\it graph} we   mean a 1-dimensional
CW-complex without loops (= 1-cells with coinciding endpoints).   The 0-cells and   (closed) 1-cells of a graph are called
 {\it vertices} and  {\it edges}, respectively. We allow multiple edges with the same vertices.
A set $s$ of edges  of a graph  $\Gamma $ is {\it cyclic}
  if $s$ is   finite and   each vertex of $\Gamma$   either is incident to two distinct edges belonging to $s$ or is not incident at all to edges belonging to $s$.   Then, the union, $\underline s$, of the  edges belonging to   $s$   is a subgraph of $\Gamma$ homeomorphic to a disjoint union of a finite number of circles.   A cyclic set of edges $s$ is a {\it cycle} if $\underline s$ is a single circle.
 A cycle  is {\it even} if   it   has an even number of elements.  An even cycle $s $   has a partition into two   subsets  called the {\it halves}  obtained by following along the   circle $\underline s$ and collecting all odd-numbered edges into one subset and all even-numbered edges into  another subset. The edges belonging to the same half have no common endpoints.



\subsection{The gliding system}\label{theglidingsystemcycles} Let   ${\Gamma}$ be a graph with   the set of edges
$E$ and let $G=G(\Gamma)=2^E$ be the power group   of $E$. Two  sets   $s,
t  \subset E$ are {\it independent} if     the edges belonging to $s$ have no
common vertices with the edges  belonging to $t$. This condition implies that $  s \cap
t =\emptyset$.

\begin{lemma}\label{glidi}  The even cycles   in ${\Gamma}$ in the role of glides together with the
independence relation   above form   a
  regular set-like gliding system in   $G$.
\end{lemma}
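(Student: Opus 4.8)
The plan is to verify the three axioms of a gliding system (Section~\ref{BU}), the set-like condition (Section~\ref{llossetgcgl}), and then invoke Lemma~\ref{glidi-} for regularity. First I would set $\G$ to be the collection of even cycles in ${\Gamma}$, viewed as elements of $G=2^E$ via the identification of a cyclic set of edges with the corresponding subset of $E$, and let $\I$ consist of the pairs of independent even cycles in the sense of Section~\ref{theglidingsystemcycles}. The unit $1=\emptyset$ is not a cycle since a cycle has at least one edge, so $\G\subset G\setminus\{1\}$ is automatic.

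For axiom (1) I would observe that in the power group $2^E$ every element is its own inverse, so $s^{-1}=s$; since $s$ is an even cycle, $s^{-1}=s$ is trivially an even cycle, giving $s^{-1}\in\G$. For axiom (2) I would take independent even cycles $s,t$. Since independence forces $s\cap t=\emptyset$ (as noted in Section~\ref{theglidingsystemcycles}), and $G$ is abelian anyway, the relation $st=ts$ holds; moreover $s^{-1}=s$ and $t^{-1}=t$, so the required memberships $(s^{-1},t)\in\I$, $(t,s)\in\I$ follow at once from the symmetry of the independence relation (having no common vertices is manifestly symmetric) together with $s^{-1}=s$. For axiom (3) I would note that a cycle $s$ shares all its vertices with itself, so $s$ is never independent from $s$, i.e. $(s,s)\notin\I$. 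Finally, the set-like condition is exactly the implication $(s,t)\in\I\Longrightarrow s\cap t=\emptyset$, which is again the observation that independent cycles have no common vertices and hence no common edges.

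The one genuinely substantive point—though still routine—is that the proposed $\G$ and $\I$ are well defined as subsets of $G\times G$ and that independence really does imply disjointness of the underlying edge sets; I would make this explicit by recalling that two edges sharing no vertex in particular are not the same edge. Once all the axioms and the set-like condition are checked, regularity is immediate from Lemma~\ref{glidi-}, which asserts that every set-like gliding system is regular in the sense of Section~\ref{regglsy}. I expect no real obstacle here: the main content is organizational, namely matching each clause of the definition of a gliding system against the combinatorics of even cycles, with the self-inverse property of the power group making axioms (1) and (2) almost vacuous and the symmetry of ``no common vertices'' handling the rest.
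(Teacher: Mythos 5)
Your proposal is correct and follows the same route as the paper: the paper's proof simply declares the gliding-system axioms and the set-like condition straightforward and then cites Lemma~\ref{glidi-} for regularity, and your write-up just makes those routine verifications explicit (using $s^{-1}=s$ in $2^E$, the symmetry of ``no common vertices,'' and the fact that a cycle is nonempty). No gaps.
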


\begin{proof}   All conditions on a gliding system are straightforward.
The resulting gliding system is obviously set-like  in the sense of
Section~\ref{llossetgcgl}. It is regular   because by
Lemma~\ref{glidi-}, all set-like gliding systems are regular.
\end{proof}

Recall that for  a set   $A\subset E$ and an even cycle $s$ in
${\Gamma}$, the gliding of $A$ along $s$ gives $sA=(s\setminus A)\cup
(A\setminus s) $. We view $sA$ as the set obtained from   $A $
be removing the edges belonging to $s\cap A$ and adding all the
other edges of~$s$.

By Section~\ref{cubecomplexX}, the gliding system of Lemma \ref{glidi}
determines a cubed complex $ X_G $ with $0$-skeleton $G $.
By Corollary~\ref{simpleNEWBIS++++}, this complex   is
nonpositively curved.





\subsection{Dimer coverings}\label{newDimer coverings} Let   ${\Gamma}$ be a graph with   the set of edges
$E$.   A {\it dimer covering}, or a {\it perfect matching}, on   ${\Gamma}$ is a
subset   of $E$ such that every vertex of ${\Gamma}$ is incident to
exactly one edge of ${\Gamma}$ belonging to this subset.
 Let ${\D}=\D({\Gamma})\subset G=2^E$
be the set (possibly, empty) of all dimer coverings on ${\Gamma}$.

We provide  the  power group $G$
with the gliding system of Lemma~\ref{glidi}. By
Section~\ref{cubecomplexX}, the set  $\D\subset G$ determines a
cubed complex $X_{\D}  \subset X_G$ with
0-skeleton~${\D}$. We call $X_{\D}$ the {\it dimer complex} of
${\Gamma}$. By definition, two dimer coverings $A, B\in \D$ are connected by an edge in $X_\D$ if and only if $ AB \subset E$ is an even cycle.   Note   that if $s=AB$ is a cycle then it is necessarily   even
with complementary halves $s\cap A=A\setminus B$ and $s\cap B=B\setminus  A$.

\begin{lemma}\label{dimersMM} The set of dimer coverings
 ${\D}\subset G$  satisfies  the 3-cube condition of   Section \ref{cubecomplexX} and the square condition     of Section \ref{incl++}.
\end{lemma}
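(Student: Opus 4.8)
The plan is to treat the square condition and the 3-cube condition simultaneously, as two instances of a single non-interference principle: gliding a dimer covering along pairwise independent even cycles affects the covering only locally, and the local effects do not overlap. Concretely, I would prove the following claim, of which the square condition is the case $k=2$ and the 3-cube condition the case $k=3$: if $A\in\D$ and $s_1,\dots,s_k$ are pairwise independent even cycles with $s_iA\in\D$ for every $i$, then $s_1\cdots s_k A\in\D$. Note that this claim already yields the 3-cube condition from the three hypotheses $s_1A,s_2A,s_3A\in\D$ alone, so the six-vertex hypothesis of that condition is more than enough.

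Setting up the local picture is the first step. By the definition of independence in Section~\ref{theglidingsystemcycles}, pairwise independent cycles share no vertices, hence (as recorded there) are pairwise disjoint as sets of edges; thus the circles $\underline{s_1},\dots,\underline{s_k}$ are pairwise disjoint subgraphs of $\Gamma$ and every vertex of $\Gamma$ lies on at most one of them. I would also record the elementary fact that, for a dimer covering $A$ and an even cycle $s$, the set $sA$ is a dimer covering if and only if $A\cap s$ is one of the two halves of $s$: indeed, examining a vertex $v$ of $\underline s$ shows that $v$ is correctly covered in $sA$ precisely when $A$ uses exactly one of the two edges of $s$ at $v$, and a cycle of even length has exactly two perfect matchings by its own edges, namely its two halves. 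This pins down the shape of $A$ near each $\underline{s_i}$, though the direct argument below does not logically require it.

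The core step is a vertex-by-vertex count for $B=s_1\cdots s_k A$. Since the $s_i$ are pairwise disjoint as edge sets, their group product in $2^E$ is their union and $B=\big(\bigcup_i s_i\setminus A\big)\cup\big(A\setminus\bigcup_i s_i\big)$. I would then fix a vertex $v$ and compare the edges of $B$ at $v$ with those of a single gliding. If $v$ lies on no cycle, then no edge of any $s_i$ is incident to $v$ (both endpoints of an edge of $s_i$ lie on $\underline{s_i}$), so $B$ and $A$ have the same incident edges at $v$, and $v$ is covered exactly once. If $v$ lies on a unique cycle $\underline{s_i}$, then among $\bigcup_j s_j$ only edges of $s_i$ can be incident to $v$, and a short check shows that an edge at $v$ belongs to $B$ if and only if it belongs to $s_iA$; since $s_iA\in\D$, the vertex $v$ is again covered exactly once. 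Hence every vertex is covered exactly once and $B\in\D$.

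The only point requiring care is the agreement step in the core count, namely verifying that incident-edge membership in $B$ coincides with membership in $s_iA$ at a vertex $v\in\underline{s_i}$, and with membership in $A$ at a vertex off all cycles. Both verifications rest entirely on the pairwise edge-disjointness of the $s_i$ supplied by independence, which is exactly what prevents the glidings from interfering at any common vertex; this is the heart of the lemma, and everything else is bookkeeping.
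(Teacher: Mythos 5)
Your proposal is correct and follows essentially the same route as the paper: the paper likewise observes that $A,sA\in\D$ forces $s\cap A$ to be a half of $s$, that independence makes the cycles vertex-disjoint, and that the simultaneous replacement of halves therefore yields a dimer covering, deducing the 3-cube condition from the square condition rather than from your uniform $k$-fold claim. Your version merely packages the same non-interference count for general $k$ and spells out the vertex-by-vertex bookkeeping that the paper leaves as ``it is clear.''
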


\begin{proof} 
The 3-cube condition follows from the square condition  which says
that for any $A\in {\D}$ and any independent even cycles $s,t$ in
${\Gamma}$ such that $sA, tA\in {\D}$, we   have $stA\in
{\D}$.
The inclusions $A, sA \in {\D}$ imply that    $s\cap A$ and $s\setminus A$ are the  halves of $s$. A similar claim holds for $t$. The independence of $s, t$ means that
  $s,t$ are
disjoint and   incident to disjoints sets of vertices.
The set   $stA\subset E$ is obtained from   $A $ through  simultaneous
replacement of the half $s\cap A$ of $s$ and the half $t\cap A$ of $t$ with the complementary halves.   It is clear   that $stA$ is a dimer
covering.
\end{proof}

 \begin{theor}\label{sspeccaseW} The dimer complex $X_{\D}$  is
nonpositively curved.
\end{theor}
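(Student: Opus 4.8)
The plan is to deduce the statement immediately from the general machinery already assembled, since all the substantive work has been carried out in the preceding lemmas. Corollary~\ref{simpleNEWBIS} asserts that, for a group $G$ equipped with a \emph{regular} gliding system, the glide complex $X_\E$ of a subset $\E\subset G$ is nonpositively curved precisely when $\E$ satisfies the 3-cube condition. I would therefore verify that both hypotheses of this corollary hold for $G=G(\Gamma)=2^E$ and $\E=\D$.

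First I would recall from Lemma~\ref{glidi} that the even cycles in ${\Gamma}$, together with the disjointness-based independence relation, constitute a regular set-like gliding system in $G$; this supplies the regularity hypothesis of Corollary~\ref{simpleNEWBIS}. Next I would invoke Lemma~\ref{dimersMM}, which establishes that the set $\D$ of dimer coverings satisfies the 3-cube condition (this already follows there from the square condition, which is the genuinely graph-theoretic input: gliding a dimer covering simultaneously along two independent even cycles yields another dimer covering). With both the regularity of the gliding system and the 3-cube condition on $\D$ in hand, Corollary~\ref{simpleNEWBIS} applies directly and forces $X_\D$ to be nonpositively curved.

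There is essentially no obstacle at this stage: the difficulty has been front-loaded into Lemma~\ref{glidi-} (regularity of all set-like gliding systems, via the cubic-set argument using disjointness of supports) and Lemma~\ref{dimersMM} (the combinatorial check that independent even cycles can be glided simultaneously). If I wanted a self-contained argument I would instead verify the 3-cube condition of Theorem~\ref{simpleNEW} from scratch, observing that seven dimer coverings occupying seven vertices of a based 3-cube $(A,\{s_1,s_2,s_3\})$ force the supports of the three even cycles $s_1,s_2,s_3$ to be pairwise disjoint and incident to disjoint vertex sets, so that gliding $A$ along all three at once again matches every vertex exactly once; but given the corollary, the one-line deduction above is the cleanest route.

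\begin{proof}
By Lemma~\ref{glidi}, the even cycles in ${\Gamma}$ form a regular gliding system in $G=2^E$. By Lemma~\ref{dimersMM}, the set $\D$ of dimer coverings satisfies the 3-cube condition. Corollary~\ref{simpleNEWBIS} now implies that $X_{\D}$ is nonpositively curved.
\end{proof}
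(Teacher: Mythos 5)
Your proposal is correct and follows exactly the paper's own route: the paper proves Theorem~\ref{sspeccaseW} by citing Lemma~\ref{dimersMM} (the 3-cube condition for $\D$) together with Corollary~\ref{simpleNEWBIS}, with regularity supplied by Lemma~\ref{glidi}. Your write-up just makes the regularity hypothesis explicit, which is a harmless elaboration of the same argument.
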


This theorem follows from Lemma~\ref{dimersMM} and Corollary
\ref{simpleNEWBIS}.

\subsection{Examples}\label{rerere} 1. Let ${\Gamma}$ be a  triangle (with 3   vertices and 3
edges). The  set of glides in $G=G(\Gamma)$  is empty, $X_G  =G$ consists of 8 points, and $X_\D  =\D  =\emptyset$.

2. Let ${\Gamma}$ be a  square  (with 4   vertices and 4
edges). Then $G=G(\Gamma)$ has   one glide, $X_G  $ is a disjoint union of 8  closed intervals, and $X_\D  $ is one of them.

3.   More generally, let ${\Gamma}$ be formed by $n\geq 1$ cyclically connected vertices and $\D=\D(\Gamma )$. If $n$ is odd, then     $X_\D =\D =\emptyset$. If $n$ is even, then ${\Gamma}$ has two dimer
  coverings   and      $X_\D $
  is a segment.

4. Let ${\Gamma}$ be  formed by 2 vertices
  and $3 $   connecting them edges. Then $G=G(\Gamma)$ has 3 glides and $X_{G} $ is a disjoint union of two complete graphs on 4 vertices. The space $X_\D $ is formed by 3 vertices and 3 edges of one of these   graphs.

 5. More generally, for   $n\geq 1$, consider the graph ${\Gamma}^n$ formed by 2 vertices
  and $n $   connecting them edges. A dimer covering of ${\Gamma}^n$ consists of a single edge, and so, the set $\D=\D(\Gamma^n)$ has $n$ elements. The graph ${\Gamma}^n$ has $n(n-1)/2$
 cycles, all  of length 2 and none of them independent.    The complex $X_{\D} $ is
a complete graph on $n$ vertices.


   \subsection{Remark}\label{rexexexexsere} If ${\Gamma}$ is a disjoint union of graphs $ {\Gamma}_1$, $ {\Gamma}_2$, then $G=G(\Gamma)=G_1\times G_2$, where $G_i=G(\Gamma_i)$ for $i=1,2$, and
  $X_G   = X_{G_1}   \times X_{G_2}  $. If the graphs $ {\Gamma}_1$, $ {\Gamma}_2$ are finite, then $\D=\D(\Gamma)= \D_1 \times \D_2$, where $\D_i=\D(\Gamma_i)$ for $i=1,2$, and  $X_\D=X_{\D_1}\times X_{\D_2}$.

\section{The dimer  group}\label{newDimer coverings+}

We focus  now on the case of finite graphs, i.e., graphs with finite sets of edges and vertices. Throughout  this section,   ${\Gamma}$ is a finite graph with set of edges  $E$. Set $G=G(\Gamma)=2^E$ and $\D=\D(\Gamma)\subset G$.

\subsection{The   group $D(\Gamma)$}\label{verynewnewDimer coverings+}
 The finiteness of $\Gamma$ implies that the cubed complexes $X_G$ and $X_{\D}\subset X_G$ are finite CW-spaces.

\begin{lemma}\label{dimersMMHULL} Every 2-element subset of
${\D} $  has a hull in ${\D}$ (cf.\ Section~\ref{mincu}).
\end{lemma}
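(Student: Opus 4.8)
The plan is to produce the hull explicitly as the cube spanned by the cycle decomposition of the symmetric difference $AB$. Given $A,B\in\D$, recall that $AB=(A\setminus B)\cup(B\setminus A)$. Since $A$ and $B$ are dimer coverings, each vertex $v$ of $\Gamma$ carries a unique $A$-edge $A_v$ and a unique $B$-edge $B_v$; if $A_v=B_v$ then $v$ meets no edge of $AB$, while if $A_v\neq B_v$ then $v$ meets exactly the two distinct edges $A_v,B_v$ of $AB$. Hence $AB$ is a cyclic set of edges in the sense of Section~\ref{Cycles in graphs}, and the subgraph $\underline{AB}$ is a disjoint union of circles. I would let $S=\{s_1,\dots,s_k\}$ be the connected components of $\underline{AB}$; each $s_i$ is a cycle, it alternates between $A$-edges and $B$-edges, so it is even, and distinct components are vertex-disjoint, i.e.\ independent glides. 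By Lemma~\ref{glidi-} the set $S$ is then cubic, so $(A,S)$ is a based cube.

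First I would check that $(A,S)$ is a cube in $\D$ having $A$ and $B$ among its vertices. For any $T\subseteq S$ the vertex $[T]A$ is obtained from $A$ by replacing, on each cycle $s\in T$, its half $s\cap A$ by the complementary half $s\setminus A$; since the cycles in $T$ are vertex-disjoint, a direct check at each vertex shows $[T]A$ is again a dimer covering, so all $2^k$ vertices lie in $\D$. Moreover $A=[\emptyset]A$ and, because $[S]=\bigcup_i s_i=AB$ (the $s_i$ being disjoint), one gets $[S]A=(AB)A=B$. Thus $\{A,B\}$ is contained in the vertex set of the cube $(A,S)$ in $\D$.

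The heart of the argument is minimality: I must show $(A,S)$ is a face of every cube $Q'$ in $\D$ whose vertex set contains $A$ and $B$. Representing $Q'$ by a based cube $(A,S')$ based at the vertex $A$, I have $B=[T']A$ for some $T'\subseteq S'$, whence $[T']=BA^{-1}=AB=[S]$. As the glides in $T'$ are pairwise disjoint, $\bigcup_{t\in T'}t=AB=\bigsqcup_i s_i$. Now each $t\in T'$ is an even cycle, so $\underline t$ is a single circle lying inside the disjoint union of circles $\underline{AB}$; by connectedness $\underline t$ is contained in one component $\underline{s_i}$, and a circle contained in a circle must be the whole circle, so $t=s_i$. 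Running this over all $t\in T'$ and using that the union of the $t$'s is all of $AB$ shows $T'=S$; hence $S=T'\subseteq S'$, and so $(A,S)$ is a face of $Q'$. This proves that $(A,S)$ is the hull of $\{A,B\}$ in $\D$. The step I expect to be the genuine obstacle is precisely this rigidity statement — that an arbitrary independent family of even cycles with union $AB$ can only be the family of connected components of $\underline{AB}$ — which rests on the elementary but essential fact that a cycle admits no proper cyclic subgraph.
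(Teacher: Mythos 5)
Your proposal is correct and follows essentially the same route as the paper: decompose $AB$ into its independent even cycles $S$, check that the based cube $(A,S)$ has all vertices in $\D$ with $[S]A=B$, and then use the uniqueness of the decomposition of a cyclic set of edges into independent cycles to show $S\subseteq S'$ for any competing cube $(A,S')$. The only difference is that you spell out the uniqueness step (a circle contained in a circle is the whole circle) which the paper states in one line, so this is simply a more detailed version of the same argument.
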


\begin{proof}  Let $A,B \in \D $ be      dimer coverings of $\Gamma$. Then
  $AB=(A \setminus B) \cup (B\setminus
A)$ is a cyclic set of edges. It splits  uniquely as  a   union of    independent   cycles. Denote the
set of these cycles by $S$. All cycles in $S$ are even: their halves are their intersections   with $A \setminus B$ and   $B \setminus A$.  In the notation of Section \ref{BU}, $[S]=AB$. As in the proof of Lemma~\ref{dimersMM}, one easily shows that
for all $ T\subset S $, the set $[T] A\subset E$ is a dimer covering
of~$\Gamma$. The based cube $(A,S)$ determines   a cube, $Q$, in
${\D}$ whose set of vertices contains   $A$ and  $B=[S]A$. An arbitrary cube $Q'$ in
$\D$ whose set of vertices contains   $A$ and  $B$ can be represented   by a based cube
$(A,S')$ such that   $B=[T] A$ for some $T\subset S'$. We
have $[T]= BA^{-1} =AB $. Since a cyclic set of edges
splits as a   union of independent cycles in a unique way, $T=S$. So,  $Q$ is a face of $Q'$.
\end{proof}

Lemma~\ref{dimersMMHULL} implies  that   $X_{\D}$    is connected, and
Theorem~\ref{sspeccaseW} implies that
$X_{\D}$ is aspherical. We call the  fundamental
group of $X_\D$ the {\it dimer group} of ${\Gamma}$ and denote it $D({\Gamma})$. We refer to Section~\ref{Cubed complexes} for properties of $D(\Gamma)$ which follow from Theorem~\ref{sspeccaseW}.
If ${\Gamma}$ does not have dimer coverings, then $X_\D=\emptyset$ and $D({\Gamma})=\{1\}$.

In Examples~\ref{rerere}.1-3, $D(\Gamma)=\{1\}$. In~\ref{rerere}.4, $D(\Gamma)=\ZZ$.
In~\ref{rerere}.5, the group $D({\Gamma}^n)$ is a free group of rank $(n-1)(n-2)/2$. By Remark~\ref{rexexexexsere}, if $\Gamma$ is a disjoint union of finite graphs $\Gamma_1$, $\Gamma_2$, then $D(\Gamma)=D(\Gamma_1)\times D(\Gamma_2)$.
Therefore   any free abelian group  of
 finite rank can be realized  as the  dimer group of a finite graph (see also Remark~\ref{remsmsms}.1). Other
 abelian groups   cannot be realized as dimer groups
 because  all dimer groups are finitely generated and torsion-free.

 \begin{theor}\label{s++1} The
 inclusion homomorphism $D({\Gamma})= \pi_1(X_{{\D}}, A) \to \pi_1(X_G, A)$ is
 injective for all $A \in  {\D}$.
\end{theor}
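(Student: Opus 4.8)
The plan is to invoke Corollary~\ref{simpleNEW++-} with $\E=\D$. That corollary asserts precisely the desired injectivity of the inclusion homomorphism once three hypotheses are checked: that $\dim X_G<\infty$, that $\D$ is a regular subset of $G$, and that $\D$ satisfies the square condition. Each of these has essentially already been established earlier, so the proof amounts to assembling the relevant facts in the correct order.

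First I would record the finiteness input. Since $\Gamma$ is a finite graph, its edge set $E$ is finite, whence $G=G(\Gamma)=2^E$ is a finite group and $X_G$ is a finite CW-complex; in particular $\dim X_G<\infty$. This is exactly the observation made at the start of Section~\ref{verynewnewDimer coverings+}. Next I would verify regularity: by Lemma~\ref{glidi} the even cycles of $\Gamma$, together with the disjointness-of-vertices relation, form a \emph{regular} set-like gliding system on $G$. By the definition of regularity of a gliding system (Section~\ref{regglsy}), regularity of the ambient gliding system forces every subset of $G$ to be regular, and in particular $\D$ is regular. Finally, the square condition on $\D$ is supplied directly by Lemma~\ref{dimersMM}, which states that $\D$ satisfies both the 3-cube condition and the square condition.

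With all three hypotheses in hand, Corollary~\ref{simpleNEW++-} applies with $\E=\D$ and yields that the inclusion $\pi_1(X_\D,A)\to\pi_1(X_G,A)$ is injective for every $A\in\D$, which is exactly the assertion of the theorem.

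I do not expect a genuine obstacle here: the statement is a clean specialization of the general machinery developed in Section~\ref{incmaps} on top of the nonpositive-curvature results of Section~\ref{Cubed complexes}. The only point that warrants a moment of care is matching the notion of \emph{regular set} demanded by Corollary~\ref{simpleNEW++-} against what is available; but because the underlying gliding system of Lemma~\ref{glidi} is regular, regularity of any subset, and in particular of $\D$, is automatic, so no separate verification is required. (Equivalently, one could bypass the corollary and apply Theorem~\ref{ecu} with $\F=\D$ and $\E=G$, using that $G$ trivially meets the 3-cube condition as a subgroup of itself and that $\D$ satisfies the square condition $\rel G$ by Lemma~\ref{dimersMM}; the two routes give the same conclusion.)
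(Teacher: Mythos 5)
Your proposal is correct and matches the paper's own argument, which simply cites Corollary~\ref{simpleNEW++-} together with Lemma~\ref{dimersMM}; you have merely spelled out the verification of the corollary's hypotheses (finiteness of $X_G$, regularity of $\D$ via Lemma~\ref{glidi} and Section~\ref{regglsy}, and the square condition via Lemma~\ref{dimersMM}), all of which is exactly what the paper intends.
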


   Theorem~\ref{s++1} follows from Corollary~\ref{simpleNEW++-} and Lemma~\ref{dimersMM}.

   \subsection{Typing homomorphisms}\label{verynewnewDimer coverings++++++++} By Section~\ref{orisetthe}, a choice of an element $e_s\in s$ in each even cycle $s$  in ${\Gamma}$  determines an orientation on $G$  which, in its turn,  determines  a   homomorphism $\mu_A:\pi_1(X_G, A)\to \A  $ for all $A\in  G$.
 Here $\A=\A(G)$ is the right-angled Artin  group associated with the gliding system in  $G$.
Since   $E $ is  finite, the group           $\A $ is    finitely generated and $\mu_A$ is an injection. Composing $\mu_A$
with the inclusion homomorphism $\pi_1(X_{{\D}}, A) \to \pi_1(X_G, A)$ we obtain the {\it typing homomorphism}  $\pi_1(X_{{\D}}, A)\to \A $ also denoted  $\mu_A$. The same homomorphism is obtained by restricting the orientation on $G$ to $\D$ and considering the associated typing homomorphism as   in Section~\ref{prelimArtin++}.  The   homomorphism $\mu_A: \pi_1(X_{{\D}}, A)\to \A $  embeds the dimer group into a finitely generated right-handed Artin group. We refer to Section~\ref{Applications} for properties of the dimer group which follow from this fact.

Though we shall not need it in the sequel, we state here a few    facts concerning the    typing homomorphisms. First of all,  two families $\{e_s\in s\}_s$ and $\{e'_s\in s\}_s$    determine the same orientation  on $ \D$ if and only if   $e_s, e'_s$  belong to the same half of~$s$ for each even cycle  $s$ in $\Gamma$. Thus, the orientations  on $\D$ associated with such families are fully determined by a choice of a   half in each   $s$. If the distinguished half of a cycle    $s_0$  is replaced with the complementary half,
the typing homomorphism $\mu_A: \pi_1(X_{{\D}}, A) \to \A$ is replaced by  its composition with the automorphism of $\A$ inverting the generator $g_{s_0}\in \A$  and fixing  the generators $g_s\in \A$ for all $s\neq s_0$. Thus, $\mu_A$ does not depend on the  choice of the   halves   up to composition with   automorphisms of $\A$. Finally, we point out a non-trivial homomorphism $u$ from $\A$ to another group  such that $   \Ima \mu_A\subset \Ker u$. Let $\B$ be the  right-angled Artin group
  with generators $\{h_e\}_{e \in E }$ and    relations $h_e h_f
  h_e^{-1} h_f^{-1}=1$ for all edges $e,f\in E$ having no common vertices. Choose in each even cycle $s$ in $\Gamma$   a half $s'\subset s$. The formula $u(g_s)= \prod_{e\in s\setminus s'} h_e ^{-1} \prod_{e\in s'} h_e  $ defines a homomorphism $u:\A\to \B$. We claim that $u\mu_A=1$   where $A\in \D$ and $\mu_A: \pi_1(X_{{\D}}, A)\to \A $ is the typing homomorphism determined by the orientation on $\D$  associated with the   family $\{s'\}_s$. To see this, we  compute   $\mu=\mu_A$ on a path~$\alpha$ in~$X_\D$ using~\eqref{pathee}. Observe that if $A_k\in \D$ is the terminal endpoint   of
$e_k$ and the initial endpoint of $e_{k+1}$, then for all $k$,  $$u(g_{\vert e_k\vert}^{\nu_k})= \prod_{e\in A_{k-1}} h_e ^{-1} \prod_{e\in A_k} h_e.$$
 Multiplying these formulas over all $k$,  we obtain $u\mu_A(\alpha)= \prod_{e\in A_{0}} h_e ^{-1} \prod_{e\in A_n} h_e$. For $ A_0=A_n=A$,   this gives $u\mu_A(\alpha)=1$.


 \subsection{Generators and relations}\label{presbygres+} For a dimer covering  $A\in \D$ and a vertex~$v$ of~${\Gamma}$, denote the only edge of   $A$ incident to $v$ by $A_v$.    A triple $A,B,C\in \D$ is {\it flat} if for any vertex $v$ of ${\Gamma}$, at least two of the edges $A_v, B_v, C_v$ are equal.

\begin{theor}\label{phi-+VBBB}    For each $A_0\in
\D
  $,
   the dimer group $ \pi_1(X_\D, A_0) $ is isomorphic to  the group with generators $\{y_{A,B}\}_{A,B\in \D}$ subject to the following relations:
  $y_{A,C}=y_{A,B}\,  y_{B,C}$ for each  flat triple   $A,B,C \in D$ and $y_{A_0,A}=1$ for all $A\in \D$. \end{theor}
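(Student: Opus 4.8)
The plan is to deduce Theorem~\ref{phi-+VBBB} from the general presentation of Theorem~\ref{phi-+}. By Lemma~\ref{dimersMMHULL}, every 2-element subset of $\D$ has a hull in $\D$, so Theorem~\ref{phi-+} applies with $\E=\D$ and base point $A_0$. It yields exactly the generators $\{y_{A,B}\}_{A,B\in\D}$, the relations $y_{A_0,A}=1$, and the relations $y_{A,C}=y_{A,B}\,y_{B,C}$ for each triple $A,B,C$ that are vertices of a single cube in $\D$. Thus the entire theorem reduces to the combinatorial claim that a triple of dimer coverings is flat if and only if its members are vertices of a common cube in $\D$.

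For the implication from cube to flatness I would use the characterization recorded after Theorem~\ref{phi-+}: the coverings $A,B,C$ are vertices of a cube iff there exist $K\in\D$ and a partition $S=X\amalg Y\amalg Z$ of a cubic (equivalently, by Lemma~\ref{glidi}, independent) set of even cycles with $[T]K\in\D$ for all $T\subset S$ and $A=[X]K$, $B=[Y]K$, $C=[Z]K$. Fix a vertex $v$ of $\Gamma$. Since independent cycles are vertex-disjoint, at most one cycle $s\in S$ is incident to $v$. If none is, then $A_v=B_v=C_v=K_v$. If $s$ lies in, say, $X$, then gliding along $s$ changes the edge at $v$, so $A_v\neq K_v$, whereas $s\notin Y\cup Z$ forces $B_v=C_v=K_v$; hence at least two of $A_v,B_v,C_v$ agree. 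The cases $s\in Y$ and $s\in Z$ are symmetric, so the triple is flat.

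The converse is the crux and the step I expect to be the main obstacle. Given a flat triple $A,B,C$, I would define $K\subset E$ to consist of the edges lying in at least two of $A,B,C$. Flatness guarantees that at each vertex $v$ exactly one incident edge, namely the common value $K_v$ of the two coverings agreeing at $v$, lies in two of the coverings, so $K$ is a dimer covering. I would then take $X$, $Y$, $Z$ to be the cycle decompositions of the symmetric differences $AK$, $BK$, $CK$. A vertex-by-vertex analysis shows that a vertex has positive degree in $\underline{AK}$ precisely when $A_v\neq K_v$, i.e.\ when $B_v=C_v=K_v\neq A_v$, and similarly $\underline{BK}$ and $\underline{CK}$ live on the vertices where $B$, respectively $C$, is the exceptional covering. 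As these three vertex sets are disjoint, the cycles in $X\cup Y\cup Z$ are pairwise vertex-disjoint, so $S=X\amalg Y\amalg Z$ is an independent (hence cubic) set of even cycles. Because each cycle of $AK$ alternates an $A$-edge with a $K$-edge, these cycles are even and $K$ meets each of them along one of its halves; by the gliding argument used in Lemmas~\ref{dimersMM} and~\ref{dimersMMHULL} it follows that $[T]K\in\D$ for every $T\subset S$. Finally $[X]K=(AK)K=A$, and likewise $[Y]K=B$, $[Z]K=C$, so by the criterion above $A,B,C$ are vertices of a cube in $\D$.

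Combining the two implications identifies the triples that are vertices of a common cube with the flat triples. Hence the relations supplied by Theorem~\ref{phi-+} coincide with the relations over flat triples asserted here, and the presentation of Theorem~\ref{phi-+} becomes exactly the one claimed, completing the proof.
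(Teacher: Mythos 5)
Your proposal is correct and follows essentially the same route as the paper: both reduce the theorem to Lemma~\ref{dimersMMHULL} and Theorem~\ref{phi-+} and then prove that a triple of dimer coverings is flat if and only if its members are vertices of a common cube, using the same covering $K$ (edges lying in at least two of $A,B,C$) and the same decomposition of the symmetric differences $AK$, $BK$, $CK$ into independent even cycles. The vertex-by-vertex disjointness argument and the converse direction via the partition $S=X\amalg Y\amalg Z$ match the paper's proof step for step.
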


\begin{proof} This theorem follows from Lemma \ref{dimersMMHULL},  Theorem \ref{phi-+}, and    the  following claim: three dimer coverings $A,B,C \in \D$ are
   vertices of a cube in $\D$ if and only if the triple $A,B,C$ is flat. We now prove this claim.

   Suppose   that the triple $ A,B,C $ is flat.
Then
 every vertex of
   ${\Gamma}$ is incident to a unique edge belonging to  at least  two   of the
sets $A,B,C\subset E$. Such edges form a dimer covering, ${K}$,   of~${\Gamma}$.
For any vertex $v$ of ${\Gamma}$,   either   $A_v=B_v=C_v ={K}_v $ or  $A_v=B_v ={K}_v\neq C_v $  up to permutation of
   $A,B,C$. In the first case,
  the sets $A{K}, B{K},
   C{K}\subset E$ contain no edges incident to $v$. In the second case,    $A{K},
   B{K}
 $ contain no edges incident to $v$ while $C{K}$ contains two such edges $C_v$ and ${K}_v$. This   shows that $A{K}, B{K}, C{K}$ are pairwise
   independent cyclic sets of edges. They split
 (uniquely) as     unions of independent cycles. All these cycles are even:  their intersections with $K$ are their halves.    Denote the resulting sets of
 even cycles by $X, Y, Z$, respectively. Thus,   $A{K}=[X], B{K}=[Y], C{K}=[Z]$. Equivalently, $A=[X]{K}$, $ B= [Y] {K}$, and $ C=[Z]{K}$. Then the cube in $\D$ determined
 by the based cube $({K}, X \cup Y\cup Z)$ contains $A,B$, and $C$.

 Conversely, suppose that $A,B,C \in \D$ are
   vertices of a cube in $\D$.  By the last remark of Section \ref{mincu},    there is ${K}\in \D$ and
a   set of independent even  cycles $S $ with a partition $S=X \amalg Y \amalg Z$ such
that $A=[X]{K}$, $B=[Y] {K}$, and $C=[Z]{K}$.
Consider a   vertex $v$ of ${\Gamma}$. If $v$ is not incident to the edges forming the cycles of $ S$, then $A_v=B_v=C_v ={K}_v $. If   $v $ is   incident to an edge   belonging to a cycle $s\in   X$, then the edges forming the cycles of $Y$ and $Z$ are not incident to~$v$  and
  $ B_v =C_v =K_v$. The cases $s\in Y$ and $s\in Z$ are  similar. In all cases, at least two of the edges $A_v, B_v, C_v$ are equal. Therefore, the triple $ A,B,C $ is flat.
\end{proof}

  Corollary  \ref{corphi-+}     yields the following computation of the {\it dimer groupoid} $ \pi_1(X_\D, \D)$.

  \begin{theor}\label{phdbdbdbBBB}   The   groupoid $ \pi_1(X_\D, \D)$ is presented by generators $\{z_{A,B}:A\to B\}_{A,B\in \D}$ and relations
  $z_{A,C}=z_{A,B}\,  z_{B,C}$ for every  flat triple   $A,B,C \in \D$. \end{theor}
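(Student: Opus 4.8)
The plan is to derive Theorem~\ref{phdbdbdbBBB} as a direct consequence of Corollary~\ref{corphi-+}, exactly as the last statement of Theorem~\ref{phi-+VBBB} was derived from Theorem~\ref{phi-+}. First I would invoke Lemma~\ref{dimersMMHULL}, which guarantees that every 2-element subset of $\D$ has a hull in $\D$; this is precisely the hypothesis under which Corollary~\ref{corphi-+} applies to the set $\E=\D$ sitting inside $G=G(\Gamma)=2^E$ with the gliding system of Lemma~\ref{glidi}. Corollary~\ref{corphi-+} then presents the groupoid $\pi_1(X_\D,\D)$ by generators $\{z_{A,B}:A\to B\}_{A,B\in\D}$ subject to the relations $z_{A,C}=z_{A,B}\,z_{B,C}$ for each triple $A,B,C$ of vertices of a cube in $\D$.

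The only remaining task is to translate the abstract indexing condition ``$A,B,C$ are vertices of a cube in $\D$'' into the concrete combinatorial condition ``$A,B,C$ is flat.'' But this translation is exactly the claim established inside the proof of Theorem~\ref{phi-+VBBB}: three dimer coverings $A,B,C\in\D$ are vertices of a cube in $\D$ if and only if the triple $A,B,C$ is flat. I would simply cite that equivalence, so that the relation set of Corollary~\ref{corphi-+} specialized to $\E=\D$ becomes verbatim the relation set asserted in Theorem~\ref{phdbdbdbBBB}. Substituting the combinatorial description of the cube-triples into the generic groupoid presentation yields precisely the stated presentation, with the geometric interpretation of each $z_{A,B}$ as a path from $A$ to $B$ in the hull of $\{A,B\}$ carried over unchanged.

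There is no substantial obstacle here, since all the work has already been done: the hull existence is Lemma~\ref{dimersMMHULL}, the groupoid presentation machinery is Corollary~\ref{corphi-+}, and the flat/cube equivalence is proved within Theorem~\ref{phi-+VBBB}. The proof is therefore a one-line assembly, mirroring the structure of the group-level statement. If anything deserves care, it is merely to note that the flatness characterization of cube-triples is symmetric and applies to \emph{any} ordered triple $A,B,C$, so that the full collection of relations $z_{A,C}=z_{A,B}\,z_{B,C}$ indexed by flat triples matches exactly the collection indexed by cube-triples; this symmetry is immediate from the definition of flatness, which is invariant under permutation of $A,B,C$.

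Thus the proof reads: By Lemma~\ref{dimersMMHULL}, every 2-element subset of $\D$ has a hull in $\D$, so Corollary~\ref{corphi-+} applies and presents $\pi_1(X_\D,\D)$ by generators $\{z_{A,B}\}_{A,B\in\D}$ with relations $z_{A,C}=z_{A,B}\,z_{B,C}$ over all triples $A,B,C$ that are vertices of a common cube in $\D$. By the claim proved in Theorem~\ref{phi-+VBBB}, such triples are exactly the flat triples, which gives the asserted presentation.
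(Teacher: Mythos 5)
Your proposal is correct and matches the paper's own derivation: the paper obtains Theorem~\ref{phdbdbdbBBB} exactly by applying Corollary~\ref{corphi-+} (whose hull hypothesis is supplied by Lemma~\ref{dimersMMHULL}) and then substituting the flat/cube-triple equivalence established in the proof of Theorem~\ref{phi-+VBBB}. Nothing is missing.
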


 \subsection{The space $L(\Gamma)$}\label{spaceLgamma} We  relate  the dimer complex $X_\D \subset X_G$ to the space $L(\Gamma)$ of dimer labelings of $\Gamma$. In the sequel, non-even cycles in $\Gamma$ are said to be    {\it odd}.

  \begin{theor}\label{spaces++1} Let  $\omega:X_G\to I^E$ be the evaluation map from Section~\ref{orisetthe2}.  The restriction of  $\omega $ to $X_\D$ is an embedding   whose image is a path-connected component $  L_0(\Gamma) $ of   the space of dimer labelings $ L(\Gamma)\subset I^E$. The component $  L_0(\Gamma) $ contains all dimer labelings associated with   dimer coverings of ${\Gamma}$.
 \end{theor}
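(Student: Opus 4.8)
The plan is to realize $\omega\vert_{X_\D}$ as a continuous injection into $L(\Gamma)$, to identify its image with an explicit ``stratum'' of $L(\Gamma)$ cut out by a parity condition, and then to show that this stratum is simultaneously open, closed, and connected, hence a whole path-component.

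First I would prove injectivity by invoking Lemma~\ref{Wisecubi} with $\E=\D$. Its two hypotheses hold here: condition (i) follows because a cubic set of glides $S$ is a vertex-disjoint family of even cycles, so $[S]$ is the underlying cyclic set of edges, which decomposes into independent cycles in a unique way (as in the proof of Lemma~\ref{dimersMMHULL}), whence $S$ is recovered from $[S]$; condition (ii) holds by taking the two halves of an even cycle $s$ as the required partition, since $A,sA\in\D$ forces $s\cap A$ and $s\setminus A$ to be exactly these halves (Lemma~\ref{dimersMM}). As $\Gamma$ is finite, $X_\D$ is a compact CW-space, so the continuous injection $\omega\vert_{X_\D}$ is an embedding onto its image. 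A vertex-by-vertex check then shows this image lies in $L(\Gamma)$: for a point $(A,S,x)$ and a vertex $v$, either no cycle of $S$ meets $v$, in which case $\omega$ assigns $1$ to $A_v$ and $0$ to the other edges at $v$, or a unique $s\in S$ meets $v$ along its two edges $A_v\in s\cap A$ and $e\in s\setminus A$, receiving $1-x(s)$ and $x(s)$; in both cases the labels at $v$ sum to $1$ with at most two nonzero.

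Next I would identify the image precisely as
$$U=\{\,f\in L(\Gamma)\ :\ \text{every cycle of the fractional subgraph of }f\text{ is even}\,\},$$
where the fractional subgraph consists of the edges $e$ with $f(e)\in(0,1)$. The key structural fact is that a dimer labeling is nonzero on at most two edges at each vertex, so the fractional subgraph has all vertices of degree $0$ or $2$ and is a disjoint union of cycles; tracing $f$ around such a cycle shows its values alternate $t,1-t$, which forces $t=1/2$ exactly on the odd cycles. The inclusion $\omega(X_\D)\subseteq U$ is immediate from the previous paragraph. For $U\subseteq\omega(X_\D)$ I would reconstruct a point of $X_\D$ from $f\in U$: let $S$ be the (even, vertex-disjoint) cycles of the fractional subgraph, let $A$ be the union of the full edges $f^{-1}(1)$ with one half of each $s\in S$ --- a dimer covering, as one checks vertex by vertex --- and set $x(s)=f(e)$ for $e\in s\setminus A$. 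Then $S$ is a set of independent even cycles, $(A,S)$ is a based cube in $\D$ (glidings of a dimer covering along independent even cycles stay in $\D$ by Lemma~\ref{dimersMM}), and $\omega(A,S,x)=f$.

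Finally I would show $U$ is clopen and connected. It is closed because $\omega(X_\D)$ is compact, and connected because $X_\D$ is connected (Lemma~\ref{dimersMMHULL}). For openness I would show the complement is closed: if $f_n\to f$ with each $f_n$ carrying an odd fractional cycle, then, as $\Gamma$ has finitely many cycles, infinitely many $f_n$ share one odd cycle $c$, on which every label equals $1/2$ by the parity rigidity above; passing to the limit gives $f\equiv 1/2$ on $c$, and since at each vertex of $c$ the two $c$-edges are the only nonzero edges of $f_n$, the same holds for $f$, so $c$ is an odd cycle of the fractional subgraph of $f$ and $f\notin U$. Being clopen and path-connected, $U$ equals the path-component $L_0(\Gamma)$ of each of its points, and it contains every $\delta_A=\omega(A,\emptyset,\emptyset)$ with $A\in\D$. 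The hard part will be this last step, the control of the limit behavior of $L(\Gamma)$: everything hinges on the rigidity that a fractional cycle is odd only when all its labels are $1/2$, which is what prevents the parity of the fractional cycles from changing under small perturbations.
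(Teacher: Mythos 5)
Your proposal is correct and follows essentially the same route as the paper: injectivity via Lemma~\ref{Wisecubi} with the identical verification of conditions (i) and (ii), the same reconstruction of a point $(A,S,x)\in X_\D$ from a labeling whose fractional cycles are all even, and the same rigidity fact (an odd fractional cycle forces all its labels to equal $1/2$) to separate $L_0(\Gamma)$ from the rest of $L(\Gamma)$. Your clopen-set packaging of the last step is just a more explicit rendering of the paper's remark that ``any deformation of $\ell$ preserves $\ell(s_i)$,'' so no genuinely different idea is involved.
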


 \begin{proof} The map $\omega$ carries a dimer covering of ${\Gamma}$ into the associated dimer labeling of ${\Gamma}$. Therefore $\omega (X_\D)$ contains the set $\D$ viewed as a subset of $L (\Gamma)$.
  Since $X_\D$ is path connected, $\omega (X_\D) $ is contained in a path connected component, $L_0$,  of $L(\Gamma) $.   We shall show that the restriction of  $\omega $ to $X_\D$ is a homeomorphism onto $ L_0 $.

  Observe that $\E=\D\subset G$ satisfies  the conditions   of Lemma \ref{Wisecubi}.
  Condition (i) holds because a  subset of $E$ may
split as a   union of independent cycles in at most one way. The partition of  even cycles into halves satisfies (ii).   Lemma \ref{Wisecubi} implies that the restriction of  $\omega $ to $X_\D$ is injective.

Consider a   dimer labeling $\ell:E\to I=[0,1]$. Then  $\ell^{-1}((0,1))\subset E$  is a cyclic set of edges. It splits uniquely as a (disjoint) union of  $n \geq 0$ independent  cycles   $s_1,..., s_n$. If $s_i$ is odd for some $i$, then   $\ell(s_i)=\{1/2\}$ (otherwise, $s_i$ could be partitioned into edges with $\ell<1/2$ and edges with $\ell>1/2$ and would be even). Then, any  deformation of $\ell$ in $L({\Gamma})$   preserves    $\ell(s_i)$. So, $\ell \notin L_0$.
Suppose that all the cycles $s_1,..., s_n$ are even. We verify   that   $\ell \in \omega(X_\D)\subset L_0$. This will imply that $L_0=\omega(X_\D) $. For each $i=1,...,n$, pick a half, $s'_i$, of~$s_i$. The definition of a dimer labeling implies that $\ell$ takes the same value, $x_i\in (0,1)$,  on all edges belonging to $s'_i$ and the value $1-x_i$ on all   edges belonging to the complementary half $s_i \setminus s'_i$.   Set $A=\ell^{-1}(\{1\}) \cup \cup_{i=1}^n s'_i\subset E $. Clearly,  the edges belonging to $A$ have no common vertices.  Since each vertex of ${\Gamma}$ is incident to an edge with positive label, it is incident to an edge belonging to $A$. Therefore  $A\in \D$.
Since $s_1,..., s_n$ are independent even cycles and $A\cap s_i= s'_i$ for each $i$, all    vertices of the  based cube $(A, S= \{s_1,..., s_n\})$ belong to~${\D}$.  The triple $(A, S, x:S\to I)$, where $x(s_i)=1-x_i$ for all $i$, represents a point $a\in X_\D$. It is easy to check that  $\omega(a)=\ell$. So,  $\ell \in \omega(X_\D)$.  \end{proof}


\begin{theor}\label{dimlabelB}  The component $L_0(\Gamma)$ of      $L({\Gamma})$ is    homeomorphic to the dimer  complex of $\Gamma$.  All  other  components of      $L({\Gamma})$ are  homeomorphic to the dimer  complexes of certain subgraphs of $\Gamma$.\end{theor}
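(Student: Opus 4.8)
The first assertion is immediate from Theorem~\ref{spaces++1}: that theorem identifies $L_0(\Gamma)$ with $\omega(X_\D)$ and shows $\omega|_{X_\D}$ is an embedding, so $L_0(\Gamma)$ is homeomorphic to $X_\D$, which is by definition the dimer complex of $\Gamma$. The plan for the second assertion is to show that each remaining component is the dimer-covering component $L_0(\Gamma')$ of a subgraph $\Gamma'$, and then to invoke the first assertion for $\Gamma'$. Fix a component $L_1\neq L_0(\Gamma)$ and a labeling $\ell\in L_1$. As in the proof of Theorem~\ref{spaces++1}, the cyclic set $\ell^{-1}((0,1))$ splits uniquely into independent cycles; let $O\subset E$ be the union of the \emph{odd} cycles in this splitting and let $V_O$ be the set of vertices incident to $O$. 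The frustration computation in that proof gives $\ell\equiv 1/2$ on $O$, and since each vertex of $V_O$ is saturated by two edges of $O$, the labeling $\ell$ vanishes on every edge meeting $V_O$ but lying outside $O$.

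First I would prove that this frozen structure is an invariant of the component. Let $Z\subset L_1$ be the set of $\ell'$ with $\ell'\equiv 1/2$ on $O$ and $\ell'=0$ on all edges incident to $V_O$ and not in $O$; closedness of $Z$ is continuity of evaluation. For openness, observe that if $\ell'\in Z$ then at each vertex of an odd cycle $o\subset O$ the two edges of $o$ are the only nonzero edges and stay nonzero under small perturbations; running ``the two labels at a vertex sum to $1$'' around $o$ then forces every nearby dimer labeling to equal $1/2$ on all of $o$ and to vanish on the remaining edges at $V_O$. Thus $Z$ is open and closed in the connected set $L_1$ and contains $\ell$, so $Z=L_1$. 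Applying the same rigidity to the odd cycles of an arbitrary $\ell'\in L_1$ shows that the odd fractional cycles of every $\ell'\in L_1$ are precisely those contained in $O$; in particular $O$, and hence $V_O$, depends only on the component.

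Let $\Gamma'$ be the subgraph of $\Gamma$ obtained by deleting the vertices $V_O$ together with all edges incident to them, so $E(\Gamma')$ consists of the edges of $\Gamma$ with both endpoints outside $V_O$. For $\ell'\in L_1$, every edge joining $V(\Gamma')$ to $V_O$ has label $0$, so $\ell'|_{E(\Gamma')}$ is a dimer labeling of $\Gamma'$ whose fractional cycles are all even (the odd ones lie in $O$). By Theorem~\ref{spaces++1} applied to $\Gamma'$ this restriction lies in $L_0(\Gamma')$, and the construction in that proof yields a dimer covering of $\Gamma'$, so $\D(\Gamma')\neq\emptyset$. The restriction map $r\colon L_1\to L_0(\Gamma')$, $\ell'\mapsto\ell'|_{E(\Gamma')}$, is continuous in the product topology; its inverse extends a labeling of $\Gamma'$ by $1/2$ on $O$ and by $0$ on the remaining edges, which is continuous and, since $Z=L_1$ and $L_0(\Gamma')$ is path-connected, lands back in $L_1$. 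Hence $r$ is a homeomorphism $L_1\cong L_0(\Gamma')$, and by the first assertion $L_0(\Gamma')$ is homeomorphic to the dimer complex $X_{\D(\Gamma')}$.

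The routine parts are the homeomorphism $r$ (coordinate restriction and extension in $I^E$) and the reduction to the previous theorem. I expect the main obstacle to be the rigidity step, that is, verifying that $Z$ is open: one must argue that the odd-cycle frustration prevents the value $1/2$ from migrating off an odd cycle under any deformation, so that an odd cycle frozen at $1/2$ can be neither perturbed nor absorbed into a larger varying configuration. This is exactly the phenomenon that separates the other components from $L_0(\Gamma)$, and it is the content that Theorem~\ref{spaces++1} does not already supply.
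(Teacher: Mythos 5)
Your proposal is correct and follows essentially the same route as the paper: both identify the odd cycles frozen at $1/2$ (your $O$, the paper's set $S$ of independent odd cycles) as the invariant of a component, delete them together with their vertices and incident edges, and identify the component with $L_0$ of the resulting subgraph via the restriction/extension homeomorphism. Your clopen argument for $Z$ just spells out the rigidity that the paper compresses into the remark, in the proof of Theorem~\ref{spaces++1}, that any deformation of $\ell$ preserves $\ell(s_i)=\{1/2\}$ on an odd cycle.
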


\begin{proof} The first claim   follows from Theorem~\ref{spaces++1}. Arbitrary components of $L({\Gamma})$ can be described as follows. Consider a  set $S$ of independent odd cycles in~${\Gamma}$. Deleting from~${\Gamma}$ the edges belonging to these cycles,  their vertices, and all the edges of~$\Gamma$ incident to these vertices, we obtain a subgraph ${\Gamma}_S$ of ${\Gamma}$. Each   dimer labeling of ${\Gamma}_S$ extends to a dimer labeling of ${\Gamma}$
  assigning $1/2$ to the edges belonging to the cycles in $S$ and $0$ to all other edges of ${\Gamma}$ not lying in ${\Gamma}_S$. This defines an embedding $i :L({\Gamma}_S)\hookrightarrow L({\Gamma})$.  The   proof of Theorem \ref{spaces++1} shows that the image of $i  $ is a  union of connected components of $L({\Gamma})$. In particular, $i (L_0({\Gamma}_S))$ is a component of $L(\Gamma)$. Moreover,    every component of $L({\Gamma})$ is realized as $i (L_0({\Gamma}_S))$ for a unique~$S$. (In particular,   $L_0({\Gamma})$   corresponds to $S=\emptyset$.) It remains to note that $ L_0({\Gamma}_S) $ is homeomorphic to the dimer complex of $\Gamma_S$.
\end{proof}

\begin{corol}\label{dimlabelB++}    All   components of      $L({\Gamma})$ are  homeomorphic to non-positively curved cubed complexes and  are aspherical. Their fundamental groups are isomorphic to the dimer groups of certain subgraphs of $ {\Gamma} $. \end{corol}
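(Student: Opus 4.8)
The plan is to deduce this corollary by assembling three results already at hand, so that it becomes a direct corollary of Theorems~\ref{sspeccaseW} and~\ref{dimlabelB} together with the material recalled in Section~\ref{Cubed complexes}. By Theorem~\ref{dimlabelB}, every component of $L(\Gamma)$ is homeomorphic to the dimer complex $X_{\D(\Gamma_S)}$ of a subgraph $\Gamma_S\subset\Gamma$, the distinguished component $L_0(\Gamma)$ itself corresponding to $S=\emptyset$. Since $\Gamma$ is finite, each such $\Gamma_S$ is finite as well. Thus it suffices to verify the three asserted properties for the dimer complex of an arbitrary finite graph and then transport them across the homeomorphism supplied by Theorem~\ref{dimlabelB}.

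First I would invoke Theorem~\ref{sspeccaseW}, which states that the dimer complex of any graph is nonpositively curved; applied to each $\Gamma_S$ this shows at once that every component of $L(\Gamma)$ is homeomorphic to a nonpositively curved cubed complex. For asphericity I would use the general facts recalled in Section~\ref{Cubed complexes}: by Gromov's theorem the universal cover of a connected finite-dimensional nonpositively curved cubed complex is a CAT(0)-space, hence contractible, so the complex is a $K(\pi,1)$. The finiteness of $\Gamma_S$ guarantees that $X_{\D(\Gamma_S)}$ is a finite, and in particular finite-dimensional, CW-complex, and it is connected by Lemma~\ref{dimersMMHULL}; thus the hypotheses of Gromov's theorem are met and asphericity follows.

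Finally, for the fundamental groups I would appeal to the definition of the dimer group in Section~\ref{verynewnewDimer coverings+}, namely $D(\Gamma_S)=\pi_1(X_{\D(\Gamma_S)})$. The homeomorphism of Theorem~\ref{dimlabelB} then identifies the fundamental group of each component of $L(\Gamma)$ with the dimer group of the corresponding subgraph $\Gamma_S$. The one point meriting attention is the finite-dimensionality hypothesis in Gromov's theorem, but this is immediate from the standing assumption that $\Gamma$ is finite, so no genuine obstacle arises; the corollary is essentially a matter of citing the correct earlier statements in the correct order.
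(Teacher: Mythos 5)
Your proposal is correct and matches the paper's intended argument: the corollary is stated without proof precisely because it follows by combining Theorem~\ref{dimlabelB} (each component is homeomorphic to $X_{\D(\Gamma_S)}$ for some subgraph $\Gamma_S$), Theorem~\ref{sspeccaseW} (dimer complexes are nonpositively curved), and the Gromov/CAT(0) facts of Section~\ref{Cubed complexes} for asphericity, with the fundamental-group identification being the definition of the dimer group. Your explicit attention to finite-dimensionality and connectivity (via finiteness of $\Gamma_S$ and Lemma~\ref{dimersMMHULL}) is exactly the right bookkeeping.
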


\subsection{Remarks}\label{remsmsms}  1.
Consider  finite graphs ${\Gamma}_1$, ${\Gamma}_2$
admitting dimer coverings. Let ${\Gamma}'$ be   obtained from   $\Gamma={\Gamma}_1 \amalg {\Gamma}_2$ by adding an edge
connecting a vertex of ${\Gamma}_1$ with a vertex of ${\Gamma}_2$. Then
 $D({\Gamma}')=D({\Gamma} )$ and $X_\D({\Gamma}')= X_\D({\Gamma} )$.
This implies that for   any  finite graph, there is a connected finite graph with the same dimer group.

2. If a finite graph  has no dimer coverings, then one can  subdivide some of its edges into two subedges so that the resulting graph has   dimer coverings.   Subdivision of edges into 3 or more  subedges is redundant. This is clear from the fact that  if  a graph ${\Gamma}'$ is obtained from   a finite graph ${\Gamma}$  by adding two new vertices inside the same edge, then there is a   canonical  homeomorphism $L({\Gamma})\approx  L({\Gamma}')$ carrying $L_0({\Gamma})$ onto $ L_0({\Gamma}')$ and $\D({\Gamma})$ onto $ \D({\Gamma}')$.

3. The typing homomorphism $\mu : D(\Gamma) \to \A $ of Section~\ref{verynewnewDimer coverings++++++++} induces an algebra homomorphism $\mu^\ast:H^\ast(\A)\to  H^\ast(D(\Gamma))$ which may provide non-trivial cohomology classes of $D(\Gamma)$ (with coefficients in any commutative ring). The algebra $H^\ast(\A)$  can be computed from  the    fact that the  cells  of the Salvetti complex  appear in the form of tori and so, the boundary maps in the cellular chain complex are zero    (see, for example,  \cite{Ch}).  The equality $   u \mu =1$ in Section~\ref{verynewnewDimer coverings++++++++} shows that $\mu^\ast$ annihilates   $u^*( H^\ast(\B))\subset   H^\ast(\A)$. It would be interesting to know whether $ \Ker \mu^\ast$ can be bigger than the ideal generated by $u^*( H^\ast(\B))$.

4. The proof of Theorem~\ref{dimlabelB}  shows that  connected   components of      $L({\Gamma})$  bijectively correspond to   sets of independent odd cycles in $\Gamma$.

\section{Dimers vs. braids}\label{extension-----}

The braid groups of a finite graph $\Gamma$ share many properties of the dimer group $D(\Gamma)$. They    are realizable as fundamental groups of non-negativelty curved   complexes, and they embed in a  finitely generated right-angled Artin group, see \cite{Ab}, \cite{CW}. We   construct   a family of homomorphisms from $D(\Gamma)$ to the braid groups of~$\Gamma$. We begin by recalling the   braid groups of CW-spaces.

\subsection{Braid groups}\label{confbraid} For a topological space $X$ and an  integer $n\geq 1$, the {\it ordered $n$-configuration space}  $\tilde C_n=\tilde C_n(X) \subset  X^n$ is formed by  $n$-tuples of pairwise distinct points of $X$. The symmetric group $S_n$ acts on $\tilde C_n $ by permutations of the coordinates, and the quotient  $C_n =\tilde C_n /S_n$ is the {\it unordered $n$-configuration space} of~$X$. If $X$ is a connected CW-space, then $C_n $ is path connected. The  group  $B_n (X) =\pi_1(C_n )$ is the {\it $n$-th braid group} of $X$. The covering $\tilde C_n \to  C_n $
determines (at least up to conjugation) a homomorphism $\sigma_n: B_n(X)\to S_n$.

\subsection{V-orientations}\label{V-orientations of cycles}  Given a cycle $s $ in a   graph $\Gamma$,  denote by $\partial s$ the set of vertices of  $s$, i.e.,  the set of vertices of $\Gamma$ incident to the edges belonging to $s$. Clearly, $\card (s)=\card (\partial s)$.     If   $s$ is even, then $\partial s$ has a partition into two   subsets    obtained by following along the   circle $\underline s$ and collecting all odd-numbered vertices into one subset and all even-numbered vertices into  another subset. These subsets are called   {\it vertex-halves} or, shorter, {\it v-halves} of $s$.
 A {\it v-orientation} of $\Gamma$ is a choice of a distinguished v-half in each even cycle in $\Gamma$.
  If $k$ is the number of even cycles in $\Gamma$, then $\Gamma$ has $2^k$ v-orientations.


\subsection{The  map $\Theta: L_0(\Gamma)\to C_N(\Gamma)$}\label{amapcgcgcg} Let $\Gamma$ be a v-oriented finite graph admitting at least one dimer covering.
All dimer coverings of $\Gamma$ have the same number, $N$, of edges equal to half of  the number of  vertices of $\Gamma$ (the latter number has to be even). Let $E$ be the set of edges of $\Gamma$.  We   define a map $\Theta$ from the dimer space $L_0(\Gamma)\subset I^E$ to the unordered $N$-configuration space $ C_N(\Gamma)$. To this end, we parametrize all edges of $\Gamma$ by the interval $I=[0,1]$.   This   allows  us to  consider  convex combinations of points of
an   edge.   The homotopy class of  $\Theta$ will  not depend on the  parametrizations.

 For a dimer labeling $\ell\in L_0(\Gamma)$, we define an $N$-point  set $\Theta(\ell)\subset \Gamma$ as follows. Each edge $e\in \ell^{-1}(\{1\}) \subset E$ contributes to $\Theta(\ell)$ the   mid-point  $(a_e+b_e)/2\in e$ where $a_e, b_e$ are the vertices of $e$. Other points of $\Theta(\ell)$ arise from the cyclic set of edges $\ell^{-1}((0,1)) \subset E$. This set is formed by several
independent  cycles $s_1,..., s_n\subset E$. The inclusion $\ell\in L_0(\Gamma)$ guarantees that   these cycles are even, cf. the proof of Theorem~\ref{spaces++1}. Each $s_i$ contributes $\card (s_i)/2$   points to $\Theta(\ell)$. If $\ell (s_i)=\{1/2\}$, then these points are the vertices of $s_i$ belonging to  the distinguished  v-half.  If $\ell (s_i)\neq \{1/2\}$, then
$\ell$ takes the same value    $  x_i \in (1/2, 1) $ on every second edge of $s_i$. Each such edge, $e$, has a vertex,   $a_e$, in the distinguished v-half of $s_i$ and a  vertex, $b_e$, in the complementary v-half. We include in $\Theta(\ell)$ the point of $e$ represented by the convex combination  $(3/2- x_i) a_e +( x_i-1/2) b_e $. The coefficients
are chosen so that  when $x_i$ converges to $1/2$ the combination converges to $a_e$, and  when $x_i$ converges to $1 $ the combination converges to   $(a_e+b_e)/2 $. Note that the points contributed by $s_i$ lie on the circle $\underline s_i\subset \Gamma$. Therefore all selected points are pairwise distinct  and   $\card (\Theta(\ell))=N$.
This defines a continuous map $\Theta: L_0(\Gamma)\to C_N(\Gamma)$.

\subsection{The homomorphism $\theta$}\label{amghohmommmngcg} Let $\Gamma$, $N$, $E$ be as in  Section~\ref{amapcgcgcg} with  connected~$\Gamma$. Then $ C_N(\Gamma)$ is connected and  the map $\Theta: L_0(\Gamma)\to C_N(\Gamma) $   induces a homomorphism of the fundamental groups $ \theta=\theta(\Gamma): D(\Gamma)\to B_N(\Gamma)$. This homomorphism  is not necessarily injective  and  may be trivial, see Example~\ref{EEEDDD+}  below.

The composition of    $\theta$ with   $\sigma_N: B_N(\Gamma)\to S_N$ can be explicitly computed as follows.   Pick a  dimer covering $A $ of $\Gamma$ and    mark  the edges belonging to  $A $ by (distinct) numbers $1,2,..., N$. A loop in $  X_\D (\Gamma)$ based at $A$  is represented  by a sequence of consecutive glidings of $A$ along certain even cycles $s_1,..., s_n$. Recursively in  $i=1,..., n$, we accompany the $i$-th  gliding  with the transformation  of the marked dimer covering which keeps the marked edges   not belonging to $s_i$ and pushes each marked edge  in $s_i$  through its vertex belonging to the distinguished v-half of $s_i$. After the $n$-th gliding, we obtain  the same dimer covering $A$ with a new marking. The resulting permutation of the set $\{1,..., N\}$  is the value of  $\sigma_N \theta:D(\Gamma)=\pi_1(X_\D (\Gamma), A) \to S_N$ on the loop.  The following example    shows that, generally speaking,  $\sigma_N \theta \neq 1$ and $\sigma_N \theta $ depends on the   v-orientation of the graph.

\subsection{Example}\label{EEEDDD}  Consider the   graph $\Gamma$  in Figure~\ref{fig1} with   vertices $a,b,c,d,e,f$. This graph has three cycles $s_1$, $s_2$, $s_{12}$ formed, respectively,  by the edges of the left square, by the edges of the right square, by all edges except the middle vertical edge. These cycles are even. We distinguish the  $v$-halves, respectively, $\{a , e\}$, $\{  c,e\}$, $\{b,d,f\}$. The vertical edges of $\Gamma$ form a dimer covering, $A$. The group $D(\Gamma)=\pi_1(X_\D (\Gamma), A)$ is an infinite cyclic group with generator $t$ represented by the sequence of glidings $A\mapsto s_1 A \mapsto s_{12} s_1 A \mapsto s_2s_{12} s_1 A$. To compute  $\sigma_3 \theta:D(\Gamma) \to S_3$, we  mark the edges of $A$ with   $1$, $2$, $3$ from left to right. The transformations  of~$A$ under the  glidings  are shown in Figure~\ref{fig1}. Therefore $\sigma_3 \theta(t)=(231)$.   The opposite choice of the distinguished v-half in  $s_2$ gives $\sigma_3 \theta(t)=(213)$.

\begin{figure}[h,t]
\includegraphics[width=9cm,height=5.4cm]{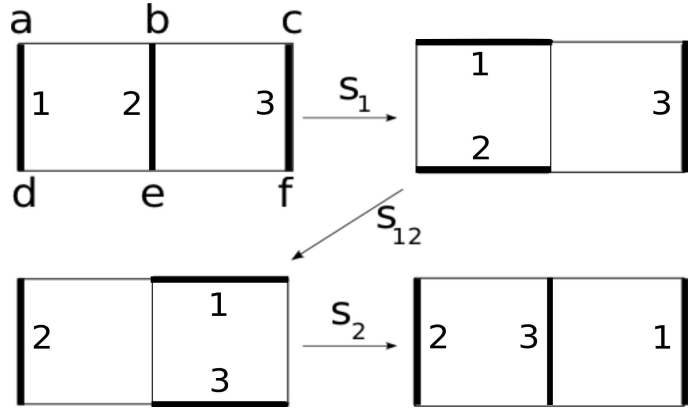}
\caption{Transformations of a marked dimer covering}\label{fig1}
\end{figure}

\subsection{The homomorphisms $\theta_n$}\label{amghohmommmngcgnnnn}     The homomorphism $\theta:D(\Gamma)\to B_N(\Gamma)$ of Section~\ref{amghohmommmngcg} can be included into a family of   homomorphisms numerated by   maps
  $n: E\to \{0,1,2,...\}$. Set $\vert n\vert =\sum_{e\in E} n(e)$.   Let $\Gamma_n$ be the graph obtained from $\Gamma$ by adding $2 n(e)$ new vertices inside   each  edge $e $.  The  canonical bijection $\D(\Gamma) \approx \D(\Gamma_n)$ (cf. Remark~\ref{remsmsms}.2)  extends to a    homeomorphism $X_\D({\Gamma}) \approx  X_\D({\Gamma_n})$ which, in its turn, induces a canonical isomorphism $D(\Gamma)\cong D(\Gamma_n)$. The v-orientation of $\Gamma$ induces a v-orientation of $\Gamma_n$: the distinguished v-half of a cycle in $\Gamma_n$ is the one including   the distinguished v-half of the corresponding cycle in $\Gamma$.   We define a homomorphism $\theta_n: D(\Gamma)\to B_{N+\vert n\vert}(\Gamma)$ as the composition
$$   D(\Gamma) \cong  D(\Gamma_n)\stackrel{\theta(\Gamma_n)}{ \xrightarrow{\hspace*{1cm}} } B_{N+\vert n\vert}(\Gamma_n)=B_{N+\vert n\vert}(\Gamma).$$
For $n=0$, we have  $\theta_n=\theta$.
Example~\ref{EEEDDD+} below exhibits v-oriented graphs such that $\theta_n=1$ for all $n$. It would be interesting to find out whether all $\theta_n$ may be trivial for all v-orientations of a graph with non-trivial dimer group. Is it true that $\cap\, \cap_n  \Ker (\sigma_{N+\vert n\vert} \theta_n )=1$ where the first intersection runs over all   v-orientations?

\subsection{Example}\label{EEEDDD+} Consider a   finite connected graph $\Gamma$ which is bipartite, i.e., the set of vertices of $\Gamma$ is partitioned into two subsets $V_0, V_1$ such that every edge has one vertex in each. All cycles in $\Gamma$ are even. We v-orient $\Gamma$   by selecting in every   cycle in $\Gamma$ the v-half formed by the vertices belonging to $V_0$. Then the image of $\Theta: L_0(\Gamma)\to C_N(\Gamma) $  lies in   $\prod_{v\in V_0} \Gamma_v\subset C_N(\Gamma)$ where  $\Gamma_v\subset \Gamma$ is the union of all half-edges adjacent to~$v$. Since $\Gamma_v$ is contractible, the map $\Theta$ is homotopic to a constant map. Then $\theta=1$. The graph $\Gamma_n$ is also bipartite for all $n$, and   $\theta_n=1$. Other   v-orientations in $\Gamma$ may give non-trivial $\theta$ and $\theta_n$, cf. Example~\ref{EEEDDD}.

 \section{Extension to hypergraphs}\label{extension}

We extend the   results of Section \ref{newDimer coverings+}   to hypergraphs.

\subsection{Hypergraphs}\label{hyper} A {\it hypergraph} is a triple $\Gamma=(E,V, \partial)$ consisting of two  sets $E$, $V$  and a map $\partial:E\to 2^V$ such that $\partial e\neq \emptyset$ for all $e\in E$. The elements of $E$ are  {\it edges} of $\Gamma$, the elements of $V$ are  {\it vertices} of $\Gamma$, and $\partial$ is  the {\it boundary map}. For $e\in E$, the elements of $\partial e\subset V$ are   {\it vertices   incident  to}~$e$ or, shorter,   {\it vertices of} $e$.

We briefly discuss examples of hypergraphs. A graph gives rise to  a hypergraph in the obvious way. Every matrix $M$ over an abelian group yields a hypergraph whose edges are   non-zero rows of $M$, whose vertices are   columns of $M$, and whose boundary map carries a row   to the set of   columns  containing   non-zero entries of this row. A CW-complex gives rise to a sequence of hypergraphs associated as above with the matrices of the boundary homomorphisms in the   cellular chain complex. Coverings of sets by   subsets also yield hypergraphs:   a set $E$ and a family of subsets $\{E_v\subset E\}_{v\in V}$ with  $\cup_v E_v=E$ determine a hypergraph  $(E,V, \partial)$ where $\partial e=\{v\in V\,\vert\, e\in E_v\}$ for any $e\in E$.

 \subsection{The gliding system}\label{cychyper}  Given a  hypergraph $\Gamma=(E,V, \partial)$, we call two sets $s,t\subset E$ {\it independent} if $\partial s\cap \partial t=\emptyset$. Of course, independent sets are disjoint.

  A {\it cyclic set of edges} in  $\Gamma $
  is a  finite set $s\subset E$ such that for every $v\in V $, the set $\{e\in s\,\vert\, v\in  \partial e\}$   has two elements or is empty. A cyclic set of edges  is a {\it cycle} if
 it does not contain smaller non-empty cyclic sets of edges.

  \begin{lemma}\label{cycliccersMM} If $s\subset E$ is a cyclic set of edges, then the cycles contained in $s$ are pairwise independent and $s$ is their   (disjoint) union.
\end{lemma}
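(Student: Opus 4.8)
The plan is to organize the edges of $s$ by their incidences and read off the cycles as the connected pieces of an adjacency relation. Concretely, I would form an auxiliary graph $\mathcal{H}$ whose vertex set is $s$, joining two distinct edges $e,e'\in s$ whenever $\partial e\cap\partial e'\neq\emptyset$, and let $s_1,\dots,s_n$ be the connected components of $\mathcal{H}$ viewed as subsets of $s$. These partition $s$ into nonempty pairwise disjoint pieces, and the whole lemma reduces to three assertions: each $s_i$ is a cyclic set, distinct $s_i$ are independent, and every cycle contained in $s$ is one of the $s_i$.

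First I would check that each $s_i$ is cyclic and that the pieces are independent, both of which follow immediately from the defining $\{0,2\}$-incidence condition. For a vertex $v$, the set $\{e\in s:v\in\partial e\}$ has $0$ or $2$ elements, and when it has two they share $v$, hence are $\mathcal{H}$-adjacent and lie in the same component. Thus for each $i$ and each $v$ the set $\{e\in s_i:v\in\partial e\}$ is either empty or equal to the full two-element set, so $s_i$ is cyclic. The same observation forces independence: if $v\in\partial s_i\cap\partial s_j$ with $i\neq j$, the two edges of $s$ incident to $v$ would be split between $s_i$ and $s_j$ while being adjacent, contradicting that they are distinct components; hence $\partial s_i\cap\partial s_j=\emptyset$.

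The main work is to show each component $s_i$ is minimal, i.e. a cycle, and the key tool is a splitting principle: if $t\subseteq s_i$ is any cyclic set, then $s_i\setminus t$ is again cyclic and is independent from $t$. This is pure incidence counting at each $v$: the counts of $t$ and of $s_i$ both lie in $\{0,2\}$ and $t\subseteq s_i$, so the count of $s_i\setminus t$ (their difference) again lies in $\{0,2\}$, proving $s_i\setminus t$ cyclic; and if $v$ were incident to an edge of $t$ then $t$'s count at $v$ equals $2$, forcing $s_i$'s count to be $2$ and hence $s_i\setminus t$'s count to be $0$, so $v\notin\partial(s_i\setminus t)$, which gives $\partial t\cap\partial(s_i\setminus t)=\emptyset$. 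Applying this to a hypothetical proper nonempty cyclic $t\subsetneq s_i$ would exhibit $s_i$ as a disjoint union of two nonempty independent pieces, between which there is no $\mathcal{H}$-adjacency, contradicting connectedness of $s_i$. So no such $t$ exists and $s_i$ is a cycle.

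Finally I would match the $s_i$ with the cycles contained in $s$. Any cycle $c\subseteq s$ is connected in the adjacency relation, for otherwise the splitting principle applied to $c$ would produce a smaller nonempty cyclic subset and violate minimality; hence $c$ lies inside a single component $s_i$, and minimality of $s_i$ together with $c\subseteq s_i$ forces $c=s_i$. Therefore the cycles contained in $s$ are exactly $s_1,\dots,s_n$, they are pairwise independent, and $s=s_1\amalg\dots\amalg s_n$. I expect the splitting principle, and its use to pin down minimality via connectedness, to be the only genuinely delicate step; the rest is bookkeeping with the $\{0,2\}$-incidence condition.
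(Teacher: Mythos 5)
Your proof is correct and follows essentially the same route as the paper: the paper defines the relation $e\sim f$ iff $\partial e\cap\partial f\neq\emptyset$, takes the generated equivalence classes (your connected components of $\mathcal{H}$), and asserts these are exactly the cycles contained in $s$. You have simply supplied the verification (cyclicity and independence of the classes, minimality via the splitting principle, and the identification with all cycles in $s$) that the paper leaves implicit.
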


\begin{proof}   Define a  relation $\sim$ on  $s$  by $e\sim f$ if $e,f\in s$ satisfy $\partial e\cap \partial f\neq \emptyset$. This relation generates an equivalence relation on $s$; the corresponding equivalence classes  are    the cycles contained in $s$.
 This   implies   the lemma.
\end{proof}

   A cycle $s\subset E$ is {\it even} if $s$ has a partition into two   subsets  called the {\it halves}, such that   any two elements of  the same half are carried by the boundary map $\partial$ to disjoint    subsets of~$V$. It is easy to see  that if such a partition  $s=s' \cup  s''$   exists, then it  is unique and $\cup_{e\in s'} \partial e =\cup_{e\in s''} \partial e $.


As in Section~\ref{theglidingsystemcycles},   even cycles   in ${\Gamma}$ in the role of glides together with the
independence relation   above form   a
  regular set-like gliding system in  the power group $G=2^E$.
By Corollary~\ref{simpleNEWBIS++++}, the associated cubed complex $ X_G=X_G({\Gamma})$ (with $0$-skeleton $G $)   is
nonpositively curved. By Section~\ref{orisetthe}, a choice of a distinguished element in each even cycle in ${\Gamma}$  determines an orientation of $G$ and a homomorphism $\mu_A:\pi_1(X_G, A)\to \A(G) $ for all $A\in  G$.
If   $E $ is  finite, then      the right-angled Artin  group    $\A(G) $ is    finitely generated and $\mu_A$ is an injection.

\subsection{Dimer coverings and labelings}\label{dimhyper}
 A {\it dimer covering}  of a  hypergraph $\Gamma=(E,V, \partial)$  is a set   $A\subset E$
   such that each vertex of $\Gamma$  is incident to exactly one  element of $A$. Equivalently, a dimer covering  of  $\Gamma$  is a set $A\subset E$ such that the family $\{\partial e\}_{e\in A}$ is a partition of $V$.
Let ${\D}=\D(\Gamma) $
be the set   of all dimer coverings of $\Gamma$.  The same arguments as in   the proof of Lemma~\ref{dimersMM}  show that
 ${\D}\subset G=2^E$  satisfies   the square condition and the 3-cube condition.
 The associated
cubed complex $X_{\D}  \subset X_G$  with
0-skeleton~${\D}$  is      the {\it dimer complex} of
$\Gamma$.  This complex   is
nonpositively curved.

   A  {\it dimer labeling} of  a  hypergraph $\Gamma=(E,V, \partial)$ is  a
labeling of the edges of $\Gamma$  by non-negative real numbers such
that for every vertex of $\Gamma$, the   labels of the incident
edges
 sum up  to give $1$ and    only one or two of   these labels may be non-zero.    The set $  L(\Gamma)$ of dimer labelings  of   $\Gamma$ is a closed subset of  the cube $I^E$. We endow
$L(\Gamma)$
  with the induced  topology. Each dimer covering   of
$\Gamma$ determines a dimer labeling that carries the edges of
the  covering  to $1$  and  all other edges  to $0$.

\subsection{Finite hypergraphs}\label{fincychyper} A hypergraph $\Gamma=(E,V, \partial)$ is {\it finite} if  the sets   $E$ and~$V$ are finite.  Then the dimer complex $X_\D$ is connected and its  fundamental
group is the {\it dimer group} of $\Gamma$.  With these definitions, the content of Section~\ref{newDimer coverings+} including all statements and arguments
applies {\it verbatim} to finite hypergraphs. One should simply replace the word \lq\lq graph" with \lq\lq hypergraph" everywhere.

                     \end{document}